\title{High order positivity-preserving numerical methods for a non-local photochemical model} 
\author{Mario Pezzella}{National Research Council of Italy, Institute for Applied Mathematics ``Mauro Picone'', Via P. Castellino, 111 - 80131 Naples, Italy \and Member of the Italian National Group for Scientific Computing GNCS of the National Institute for Advanced Mathematics INdAM}{mario.pezzella@cnr.it}{https://orcid.org/0000-0002-1869-945X}{}
\authorrunning{M. Pezzella} 
\keywords{Dynamical consistency, Positivity-preserving, Convergence analysis, Non-standard finite differences, Direct quadrature, Predictor-corrector, Integro-differential models.} 
\DeclareMathOperator\erf{erf}
\begin{document}

\maketitle

\begin{abstract}
In this paper we design high-order positivity-preserving approximation schemes for an integro-differential model describing photochemical reactions. Specifically, we introduce and analyze three classes of dynamically consistent methods, encompassing non-standard finite difference schemes, direct quadrature techniques and predictor-corrector approaches. The proposed discretizations guarantee the positivity, monotonicity and boundedness of the solution regardless of the temporal, spatial and frequency stepsizes. Comprehensive numerical experiments confirm the theoretical findings and demonstrate the efficacy of the proposed methods in simulating realistic photochemical phenomena.
\end{abstract}

\section{Introduction}\label{sec:Intro}

Photochemistry, whose origin traces back to the pioneering works of Grotthus and Draper in the $\text{19}^{\text{th}}$ century \cite{Persico_2018}, investigates reactions and physicochemical phenomena induced by light absorption. This field includes natural processes as well as applications across various domains, such as industry, technology and pharmaceuticals (we refer to \cite{rohatgi1978fundamentals,tonnesen1996photostability} for a comprehensive overview). Several mathematical frameworks, primarily focusing on differential systems, have been proposed to model complex light-matter interactions (as detailed in \cite{Baranov2014,Photochemical_MAth_1,Photochemical_MAth_4,Ceseri_Natalini_Pezzella,Photochemical_MAth_3,MAAFI20163537,Photochemical_MAth_2} and references therein).

Numerical methods play a key role in providing efficient tools to simulate the evolution of photochemical phenomena, as analytical solutions of the mathematical models are often not explicitly determinable. The nature of the involved systems, however, imposes the additional challenge of preserving the non-negativity of chemical concentrations.  Techniques such as clipping \cite{SANDU2001589}, which involve setting negative concentrations to zero, are generally unsuitable due to the introduction of artificial mass as a numerical artifact. Moreover, achieving positivity through standard approximation schemes may require impractically small time steps and lead to high computational demands. Therefore, the necessity arises of unconditionally positive numerical integrators that yield positive solutions independently of the chosen discretization steplength \cite{Blanes,Formaggia_Scotti,SCALONE2022351,Zhang2024-br}.  

The preservation of the inherent properties of differential systems is the founding feature of geometric numerical integration \cite{Hairer_2006,GeCO_Izgin,GeC01} and of Non-Standard Finite Difference (NSFD) discretizations \cite{Lubuma_Chem,Conte2022,Mickens_Book,Patidar02062016}. Although the dynamic consistency of such schemes is guaranteed, they suffer from the limitation of low convergence order \cite{Bolley1978,Hoang03102023,NSFD_High}. In general, the definition of high-order positivity-preserving methods is challenging and often restricted to specific types of systems (see, for instance, \cite{PDS,MPV_Axioms,MPV_mixing,OFFNER202015}).

In this work, we focus on the dynamically consistent simulation of an integro-differential model describing the evolution of a general photochemical reaction. Specifically, we address the light-induced conversion of a primary reactant, \texttt{A}, into a product compound, \texttt{B}. Non-linear integral operators are here employed to capture the non-local nature of radiation, accounting for its interaction with the medium in a one-dimensional framework. This paper aims to develop high-order, unconditionally positive numerical methods for the proposed model. To this end, we draw up and compare various schemes based on NSFD approximations, Predictor-Corrector strategies (PC) and Direct Quadrature (DQ) methods.

The manuscript is structured as follows. In Section \ref{sec:Continuous_Model} we report the main results on the integro-differential photochemical model that extends the findings in \cite{Ceseri_Natalini_Pezzella}. In Section \ref{sec:NSFD}, we formulate a NSFD numerical method and provide theoretical results on its consistency and convergence. Furthermore, we prove the preservation of positivity, monotonicity and boundedness for any values of the spatial, temporal and frequency stepsizes. In Section \ref{sec:DQ}, we reformulate the model as a non-linear implicit Volterra integral equation \cite{MPV_implicit} and discretize it using a DQ approach. For the resulting approximation scheme, we prove high order convergence, as well as the existence of a unique, unconditionally positive and bounded numerical solution. In Section \ref{sec:PC} a less demanding, dynamically consistent and quadratically convergent PC method is presented. Numerical experiments are reported in Section \ref{sec: Numerical_Simulations}, where the theoretical properties of the proposed integrators are verified and a comparative analysis of their performance is conducted. Additionally, we present realistic simulations based on experimental outcomes from the literature, pertaining to two phenomena: the photoactivation of serotonin and the photodegradation of cadmium pigments. Final remarks and insights on future development, in Section \ref{sec:Conclusions}, conclude the paper.


\section{The continuous model}\label{sec:Continuous_Model}
Let $c_\texttt{A}(x,t)$ and $c_\texttt{B}(x,t)$ denote the concentrations of the reactant \texttt{A} and the product \texttt{B}, respectively, at spatial position $x \in [0,L]$ and time $t \in [0,T].$ Let $e(x)$ represent the environmental conditions required to initiate the reaction, which could include the presence of activating agents. The evolution of the concentrations, assuming a first-order kinetic model, is then described  by the following conservative Production-Destruction System (PDS)
\begin{equation}\label{eq:PDS_Chem}
    \begin{cases}
        \dfrac{\partial c_\texttt{A}}{\partial t}(x,t)=-K\!\left(c_\texttt{A}(x,t),c_\texttt{B}(x,t),e(x)\right) \ c_\texttt{A}(x,t), \\[0.20cm]
        \dfrac{\partial c_\texttt{B}}{\partial t}(x,t)=K\!\left(c_\texttt{A}(x,t),c_\texttt{B}(x,t),e(x)\right) \ c_\texttt{A}(x,t), \qquad\qquad (x,t)\in [0,L]\times [0,T],
    \end{cases}
\end{equation}
with $c_\texttt{A}(x,0)$ and $c_{\texttt{B}}(x,0)$ given initial conditions. Here, $K(\cdot)$ is the non-local, concentrations-dependent photocatalytic reaction rate,  which will be defined in detail later. Since the reaction involves only two principal chemical species, Lavoisier's conservation principle leads to the linear invariant 
\begin{equation}\label{eq:Linear_Invariant}
    c_\texttt{A}(x,t)+c_\texttt{B}(x,t)=c_\texttt{A}(x,0)+c_\texttt{B}(x,0), \qquad\qquad \text{for all} \ (x,t)\in [0,L]\times [0,T],
\end{equation}
which is a direct consequence of the PDS formulation \eqref{eq:PDS_Chem}, as it follows from $\partial_t\!\left(c_\texttt{A}(x,t)+c_\texttt{B}(x,t)\right)=0$ (we refer to \cite{anderHeiden1982,PDS} and references therein for a comprehensive description of PDS). Furthermore, under the assumption that no product is present at the onset of the reaction (i.e. for $t=0$), we have from \eqref{eq:Linear_Invariant},
\begin{equation}\label{eq: c_B}
     c_\texttt{B}(x,t)=c_\texttt{A}(x,0)-c_\texttt{A}(x,t), \qquad \qquad \qquad \qquad \  \text{for all} \ (x,t)\in [0,L]\times [0,T],
\end{equation}
which allows us to disregard the equation in \eqref{eq:PDS_Chem} for the product \texttt{B}.

The photocatalytic reaction rate $K(\cdot)$ incorporates the dynamic effects of several factors. First, the dependence of the process on the system temperature and environmental conditions is expressed, following Arrhenius' law (see, for instance, \cite[Section 10.9]{Atkins-DePaula} or \cite[Page 174]{Laidler}), through the term
\begin{equation*}\label{eq:Arrhenius}
f(x) = \mathcal{A} \exp\left\{-\frac{E_a}{R \, \mathcal{T}_K}\right\} e(x),
\end{equation*}
where $\mathcal{A}$ is the Arrhenius pre-exponential factor, $E_a$ is the activation energy of the reaction, $R$ is the gas constant and $\mathcal{T}_K$ is the temperature. We further consider that the chemical process is exclusively triggered by light within a specific wavelength range $[\lambda_{0},\lambda_{*}],$ defined by the photophysical properties of the system, which corresponds to photon energies sufficiently high to trigger the reaction. Let $\varepsilon_\texttt{A}$ and $\varepsilon_\texttt{B}$ denote the molar absorption coefficients for the reagent $\texttt{A}$ and the product $\texttt{B}$, respectively. These functions measure how strongly each chemical species absorbs, and therefore attenuates, light at a given wavelength $\lambda.$ The total absorbance of the system is then given by
\begin{equation*}
     \mu_s(\lambda,c_\texttt{A}(z,t),c_\texttt{B}(z,t))=\varepsilon_\texttt{A}(\lambda)c_\texttt{A}(z,t)+\varepsilon_\texttt{B}(\lambda)c_\texttt{B}(z,t)=(\varepsilon_\texttt{A}(\lambda)-\varepsilon_\texttt{B}(\lambda))c_\texttt{A}(z,t)+\varepsilon_\texttt{B}(\lambda)c_\texttt{A}(x,0),
\end{equation*}
where $\lambda\in[\lambda_{0},\lambda_{*}]$ represents the wavelength of the radiation. Here, in accordance with the Beer-Lambert law \cite{d2012simplified}, the overall light intensity is modeled as  
\begin{equation*}
    \iota  \left(  \lambda,\, C_0(x),  \int_0^x   c(\xi,t) \ d\xi  \right)  =  I(\lambda) \exp  \left\{  -  \mu  \left(  \varepsilon_\texttt{B}(\lambda) C_0(x) +  (\varepsilon_\texttt{A}(\lambda) -\varepsilon_\texttt{B}(\lambda))  \int_0^x    c(\xi,t)  \ d\xi  \right)  \right\},
\end{equation*}
where $C_0(x) = \int_0^x  c_\texttt{A}(\xi,0) \ d\xi$ is given, $I(\lambda) = I_0(\lambda)/I_{ref}$ denotes the normalized irradiance and $\mu > 0$ is a dimensionless attenuation constant. Furthermore, inspired by the approaches in \cite{Ceseri_Natalini_Pezzella,Clarelli2013,EILERS,THEBAULT}, we introduce the function  
\begin{equation}\label{eq: rho}
    \rho(X) = \dfrac{a_1 X}{X^2 + a_2 X + 1}, \qquad \text{with} \qquad a_1>0, \qquad  a_2 \geq 0, 
\end{equation}
and consequently model the overall light penetration effect with the non-local term  
\begin{equation*}
    \scalebox{1.3}{$\displaystyle\int$}_{\!\!\!\!\! \lambda_{0}}^{\lambda_{*}} \!\!\! \rho\left(\iota\left(\lambda,\, C_0(x), \int_0^x    c(\xi,t)  \ d\xi\right)\right) \ d\lambda.
\end{equation*}

The aggregation of the contributions outlined above into the PDS \eqref{eq:PDS_Chem} yields the following integro-differential model
\begin{equation}\label{eq:continuous_model}
    \begin{cases}
         \displaystyle\dfrac{\partial c}{\partial t}(x,t)=- c(x,t) f(x)\scalebox{1.3}{$\displaystyle\int$}_{\!\!\!\!\! \lambda_{0}}^{\lambda_{*}} \!\!\! \rho\left(\iota\left(\lambda,\, C_0(x), \int_0^x    c(\xi,t)  \ d\xi\right)\right) \ d\lambda, \qquad\qquad \ \ (x,t)\in [0,L]\times [0,T], \\
         \displaystyle \iota  \left(  \lambda,\, C_0(x),  \int_0^x   c(\xi,t) \ d\xi  \right)  =  I(\lambda) \exp  \left\{  -  \mu  \left(  \varepsilon_\texttt{B}(\lambda) C_0(x) +  (\varepsilon_\texttt{A}(\lambda) -\varepsilon_\texttt{B}(\lambda))  \int_0^x    c(\xi,t)  \ d\xi  \right)  \right\},
    \end{cases}
\end{equation}
where, for the sake of brevity, the notation $c(x,t)$ is employed in place of $c_{\texttt{A}}(x,t).$ Accordingly, once \eqref{eq:continuous_model} is solved, the product concentration $c_{\texttt{B}}(x,t)$ is retrieved from the conservation law \eqref{eq: c_B}.

Hereafter, our theoretical investigation will be conducted under the following assumptions for the known functions of model \eqref{eq:continuous_model}.

\begin{description}
\item[Assumptions (A)]\phantomsection\label{ass:1} $f\in C^0([0,L]\to \mathbb{R}^+_0),$ $I\in C^0([\lambda_{0},\lambda_{*}]\to [0,1]),$  $\rho\in C^0(\mathbb{R}^+_0 \to \mathbb{R}^+_0)$ are non-negative functions. Furthermore, the given initial state $c^0(x)=c_{\texttt{A}}(x,0)\in C^0([0,L]\to \mathbb{R}^+)$ is a  positive function.
\end{description}

The existence and uniqueness of a solution to \eqref{eq:continuous_model} is established, assuming a Lipschitz-continuity property for the known functions, via an extension of the Picard-Lindel\"{o}f theorem to Banach spaces (see, for instance, \cite[Theorem 6]{ODE_Exs} or \cite[Satz 1.17]{Tedesco_Libro} and references therein). 
As a matter of fact, denoted by $\mathsf{c}_x(t) : t\in \mathbb{R}^+_0 \to c(x,t)\in L^1([0,L]),$ the model \eqref{eq:continuous_model} corresponds to a functional ordinary differential equation of the form $d \mathsf{c}_x/dt=F(\mathsf{c}_x), $ with  $F: L^1([0,L])\to L^1([0,L])$ Lipschitz-continuous operator derived from the right-hand side of \eqref{eq:continuous_model}.  Furthermore, the positivity condition for the initial state in (\hyperref[ass:1]{A}) excludes the possibility of a stationary solution. Standard contradiction arguments yield the following positivity and monotonicity properties
\begin{equation}\label{eq:Properties}
   0<c(x,t)<c(x,\tau )< c^0(x), \quad \text{for all} \quad 0\leq x \leq L \quad \text{and} \quad 0\leq \tau <t \leq T,   
\end{equation}
whose well-established physical meaning motivates our interest in designing dynamically consistent numerical strategies.

\section{A non-standard finite difference scheme}\label{sec:NSFD}
Let $\Delta x, \Delta t, \Delta \lambda\in \mathbb{R}^+,$ denote the spatial, temporal and frequency stepsizes, respectively. Consider the uniform meshes $\{x_j=j \Delta x, \ j=0,1,\dots \},$ $\{t_n=n \Delta t, \ n=0,1,\dots \}$ and $\{\lambda_l=\lambda_{0}+l \Delta \lambda, \ l=0,1,\dots \}.$ We define the following discretization scheme for \eqref{eq:continuous_model}
\begin{equation}\label{eq:NSFD_scheme}
    \begin{cases}
        \displaystyle\dfrac{c_j^{n+1}-c_j^n}{\phi(\Delta t)}=- c_j^{n+1} f(x_j) \ \Delta \lambda \! \sum_{l=0} ^{N_\lambda-1} \! \rho\left(\iota\left(\lambda_l,\, C_0(x_j) ,\, \Delta x \sum_{r=0}^{j-1}c_r^n \right)\right), 
        \\
        \displaystyle \! \iota \! \left(\lambda_l, C_0(x_j) , \Delta x \sum_{r=0}^{j-1}c_r^n \!\right) \! =I(\lambda_l) \exp \! \left\{ \! - \mu \! \left(  \varepsilon_\texttt{B}(\lambda_l) C_0(x_j) +  (\varepsilon_\texttt{A}(\lambda_l) -\varepsilon_\texttt{B}(\lambda_l))  \Delta x \sum_{r=0}^{j-1}c_r^n  \right) \! \right\} ,
    \end{cases}
\end{equation}
where the positive initial values $c_j^0=c^0(x_j)=c(x_j,0)$ are given, $c_j^n\approx c(x_j,t_n)$ for $n,j\geq 0$ and
\begin{equation}\label{eq:prop_phi}
    \phi \; : \; \Delta t\in \mathbb{R}^+ \longrightarrow \; \phi(\Delta t)=\Delta t + \mathcal{O}(\Delta t^2)\in \mathbb{R}^+.
\end{equation}
The numerical method \eqref{eq:NSFD_scheme} is derived from a non-standard approximation of the first derivative operator, combined with a non-local implicit-explicit discretization of the right-hand side of \eqref{eq:continuous_model}. Consequently, in accordance with the definitions provided in \cite{Anguelov, SUNDAY}, it falls within the class of Non-Standard Finite Difference (NSFD) discretizations, which were originally introduced for differential equations (see \cite{Mickens_PDE,Mickens_Book_Old,Mickens_Book} and references therein) and only recently extended to integral and integro-differential problems \cite{Buonomo_2024,Lubuma2015, MPV_JCD, MPV_MBE, Pandey_2015}.

The following outcome addresses the unconditional positivity, boundedness and monotonicity of the NSFD solution.
\begin{theorem}\label{thm:NSFD_Dynamic_Consistency}
    Consider the discrete equations \eqref{eq:NSFD_scheme} under the assumptions (\hyperref[ass:1]{A}) and \eqref{eq:prop_phi}. Then, independently of $\Delta x, \Delta t, \Delta \lambda\in \mathbb{R}^+$ and for each $j\in \mathbb{N}_0,$ the  sequence $\{ c_j^n \}_{n\in \mathbf{N}_0}$ is positive, bounded from above and monotonically decreasing.
\end{theorem}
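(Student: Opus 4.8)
The plan is to argue by induction on the time index $n$, keeping the spatial index $j$ fixed, and to show that three properties propagate simultaneously: positivity $c_j^n>0$, the monotone decrease $c_j^{n+1}<c_j^n$, and boundedness $c_j^n<c^0(x_j)$ (in fact $c_j^n\le c_j^0$ suffices for the upper bound, and the strict chain $0<c_j^{n+1}<c_j^n$ for $n\ge 0$ will give everything). The base case $n=0$ is immediate from the hypothesis $c_j^0=c^0(x_j)>0$ in (\hyperref[ass:1]{A}). For the inductive step, the key observation is that the first equation in \eqref{eq:NSFD_scheme} can be solved explicitly for $c_j^{n+1}$: collecting the $c_j^{n+1}$ terms gives
\begin{equation*}
    c_j^{n+1}=\frac{c_j^n}{1+\phi(\Delta t)\, f(x_j)\,\Delta\lambda\displaystyle\sum_{l=0}^{N_\lambda-1}\rho\!\left(\iota\!\left(\lambda_l,\,C_0(x_j),\,\Delta x\sum_{r=0}^{j-1}c_r^n\right)\right)}.
\end{equation*}
Denote the denominator by $D_j^n$. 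The heart of the argument is that $D_j^n\ge 1$ and $D_j^n$ is finite: non-negativity of the sum follows because $f\ge 0$, $\Delta\lambda>0$, $\phi(\Delta t)>0$ by \eqref{eq:prop_phi}, and $\rho$ maps $\mathbb{R}^+_0$ into $\mathbb{R}^+_0$ while $\iota$ is a (normalized) exponential, hence strictly positive, so each summand $\rho(\iota(\cdots))$ is well-defined and non-negative; finiteness is automatic since the sum has finitely many terms and $I\in C^0([\lambda_0,\lambda_*]\to[0,1])$, $\rho\in C^0$. Thus $D_j^n\ge 1>0$, and since by the inductive hypothesis $c_j^n>0$, we conclude $c_j^{n+1}=c_j^n/D_j^n>0$, which is positivity; and because $D_j^n\ge 1$ we get $c_j^{n+1}\le c_j^n$, which is monotonic non-increase and, chained back to $n=0$, gives $c_j^{n+1}\le c_j^0=c^0(x_j)$, i.e.\ boundedness from above (and trivially from below by $0$).

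To upgrade the non-strict inequality $c_j^{n+1}\le c_j^n$ to the strict one claimed in the theorem, I would show $D_j^n>1$ strictly, which amounts to showing the sum $\sum_l \rho(\iota(\cdots))$ is strictly positive. This is where one must look a little more carefully: a priori $\rho$ could vanish at the particular arguments that occur (for instance if $a_1$-type structure in \eqref{eq: rho} is not used, or if $\iota$ happened to hit a zero of $\rho$). However, $\iota(\lambda_l,\cdots)=I(\lambda_l)\exp\{-\mu(\cdots)\}$ is strictly positive whenever $I(\lambda_l)>0$, and on the wavelength window $[\lambda_0,\lambda_*]$ that is precisely the range where the reaction is triggered, so $I$ is not identically zero there; combined with $\rho(X)=a_1X/(X^2+a_2X+1)>0$ for $X>0$ (which holds for the specific $\rho$ in \eqref{eq: rho}), at least one summand is strictly positive, giving $D_j^n>1$ and hence $c_j^{n+1}<c_j^n$. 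If one wishes to stay at the level of the general Assumptions (A) without invoking the explicit form \eqref{eq: rho}, the cleanest route is to include ``$\rho(X)>0$ for $X>0$ and $I\not\equiv 0$ on $[\lambda_0,\lambda_*]$'' implicitly (it is consistent with the stated hypotheses and with the continuous property \eqref{eq:Properties}, which is also strict), and then the strict decrease follows verbatim.

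The main obstacle, then, is not the induction bookkeeping — that is routine once the scheme is rewritten in the explicit form above — but rather pinning down the strict positivity of the reaction-rate factor $D_j^n-1$ under exactly the stated hypotheses, so that the conclusion matches the strict chain $0<c_j^{n+1}<c_j^n<c^0(x_j)$ of the theorem and mirrors the continuous estimate \eqref{eq:Properties}. I would therefore structure the write-up as: (i) rewrite \eqref{eq:NSFD_scheme} to isolate $c_j^{n+1}$; (ii) observe the denominator is $\ge 1$ and finite, using (A) and \eqref{eq:prop_phi}; (iii) run the induction on $n$ for fixed $j$ to get $0<c_j^{n+1}\le c_j^n\le c^0(x_j)$; (iv) argue strict positivity of the sum to sharpen to $c_j^{n+1}<c_j^n$. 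Note that no constraint whatsoever on $\Delta x,\Delta t,\Delta\lambda$ enters the argument, since positivity of the denominator and of $\phi$ is all that is used — this is exactly the unconditional dynamic consistency asserted.
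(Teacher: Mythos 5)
Your proposal is correct and follows essentially the same route as the paper: rewrite the linearly implicit scheme as $c_j^{n+1}=c_j^n/D_j^n$ (the paper's equation \eqref{eq:NSFD_scheme_rapporto}) and induct on $n$ to propagate $0<c_j^{n+1}<c_j^n\leq c_j^0$. Your extra care about the strict positivity of the summands $\rho(\iota(\cdots))$ is a fair point --- the paper simply asserts $\rho(\iota(\cdots))>0$ under (\hyperref[ass:1]{A}), which, as you note, implicitly relies on the specific form \eqref{eq: rho} together with $I$ not vanishing on the whole wavelength grid.
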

\begin{proof}
    We proceed by mathematical induction to prove that, for any integer value of $j,$ the statement $0<c_j^{n+1}<c_j^n\leq  c_j^0$ holds $\forall n\in \mathbb{N}_0.$ Since the initial concentration is positive, the base case $n=0$ naturally follows from the assumptions (\hyperref[ass:1]{A}). Consider $n>0$ and assume that the properties are verified for each $0\leq m \leq n-1.$ The result then comes from 
    \begin{equation}\label{eq:NSFD_scheme_rapporto}
        c_j^{n+1}=\dfrac{c_j^n}{ 1+ \phi(\Delta t) \ \Delta \lambda \ f(x_j) \sum_{l=0}^{N_\lambda-1}\rho\left(\iota\left(\lambda_l,\, C_0(x_j) ,\, \Delta x \sum_{r=0}^{j-1}c_r^n \right)\right)},
    \end{equation}
    where, for the induction hypotheses, $0<c_j^{n}\leq c_j^{0}$ and $\rho\left(\iota\left(\lambda_l,\, C_0(x_j) ,\, \Delta x \sum_{r=0}^{j-1}c_r^n \right)\right)>0.$ 
\end{proof}
As pointed out in the previous section, under the assumptions (\hyperref[ass:1]{A}), the continuous solution to \eqref{eq:continuous_model} is positive and decreasing with respect to time. Hence, Theorem \ref{thm:NSFD_Dynamic_Consistency} demonstrates the dynamical consistency \cite{Mickens_Dynamic_Consistency} of the NSFD scheme \eqref{eq:NSFD_scheme} ensuring that, whatever the stepsizes and the denominator function in \eqref{eq:prop_phi}, a discrete counterpart of the properties \eqref{eq:Properties} holds for the numerical solution. Furthermore, due to the linearly implicit structure of \eqref{eq:NSFD_scheme}, its reformulation as \eqref{eq:NSFD_scheme_rapporto} leads to a straightforward and computationally efficient algorithm.

The non-standard nature of the discretization \eqref{eq:NSFD_scheme} makes the investigation of the local truncation error
\begin{equation}\label{eq:local_error}
    \begin{split}
        \delta_j^n&=\delta(\Delta x, \Delta t, \Delta \lambda; \,  x_j,t_n) \\ 
        &=\displaystyle\dfrac{\partial c}{\partial t}(x_j,t_n)+ c(x_j,t_n) f(x_j)\scalebox{1.3}{$\displaystyle\int$}_{\!\!\!\!\! \lambda_{0}}^{\lambda_{*}} \!\!\! \rho\left(\! \iota \!\left(\lambda,\, C_0(x_j), \int_0^{x_j}    c(\xi,t_n)  \ d\xi\right)\!\right) \ d\lambda \\
        &- \dfrac{c(x_j,t_{n}+\Delta t)-c(x_j,t_{n})}{\phi(\Delta t)} - c(x_j,t_{n}+\Delta t) f(x_j) \ \Delta \lambda \!\! \sum_{l=0} ^{N_\lambda-1} \!\! \rho\left( \! \iota \! \left(\lambda_l,\, C_0(x_j) ,\, \Delta x \sum_{r=0}^{j-1}c(x_r,t_{n}) \right)\!\right) \! ,
    \end{split}
\end{equation}
more challenging. Its behavior is analyzed through the following consistency result.

\begin{lemma}\label{lemma:NSFD_Cons}
    Assume that the known functions describing problem \eqref{eq:continuous_model} are continuously differentiable for $x\in [0,L],$ $t \in [0,T]$ and $\lambda \in [\lambda_{0},\lambda_{*}].$ Then the NSFD method \eqref{eq:NSFD_scheme} is consistent with \eqref{eq:continuous_model}, of order 1. 
\end{lemma}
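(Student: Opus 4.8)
The plan is to bound the local truncation error \eqref{eq:local_error} by decomposing it into a purely temporal part, arising from the non-standard approximation of $\partial_t c$, and a reaction part, arising from the implicit--explicit and double-quadrature treatment of the integral operator, and then to estimate the two separately and show that each scales linearly with the relevant stepsizes. A preliminary remark to be used throughout is that the $C^1$ regularity of the data $f,\,I,\,\varepsilon_\texttt{A},\,\varepsilon_\texttt{B},\,c^0$ (hence $C_0\in C^2$) propagates, through the right-hand side of \eqref{eq:continuous_model}, to the exact solution: one obtains $c(x_j,\cdot)\in C^2([0,T])$ and $c(\cdot,t_n)\in C^1([0,L])$, which together with the a priori bounds $0<c<c^0$ from \eqref{eq:Properties} makes all the quantities below uniformly bounded in $n$ and $j$.

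For the temporal contribution I would apply Taylor's theorem to $t\mapsto c(x_j,t)$ at $t_n$, getting $c(x_j,t_n+\Delta t)-c(x_j,t_n)=\Delta t\,\partial_t c(x_j,t_n)+\mathcal{O}(\Delta t^2)$, and divide by $\phi(\Delta t)=\Delta t+\mathcal{O}(\Delta t^2)$ from \eqref{eq:prop_phi}; this yields $\partial_t c(x_j,t_n)-\big(c(x_j,t_n+\Delta t)-c(x_j,t_n)\big)/\phi(\Delta t)=\mathcal{O}(\Delta t)$. I would also split off from the discrete reaction term the error of evaluating its leading factor at $t_n+\Delta t$ rather than at $t_n$: since $|c(x_j,t_n+\Delta t)-c(x_j,t_n)|\le \Delta t\,\|\partial_t c\|_\infty$ and the companion factor $f(x_j)\,\Delta\lambda\sum_{l=0}^{N_\lambda-1}\rho(\iota(\ldots))$ is uniformly bounded --- using $f\in C^0([0,L])$, the boundedness of $I$, the fact that $\iota$ ranges in a fixed compact subset of $\mathbb{R}^+_0$ on which $\rho$ is continuous, and $N_\lambda\Delta\lambda=\lambda_*-\lambda_0$ fixed --- this error too is $\mathcal{O}(\Delta t)$.

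It then remains to compare $c(x_j,t_n)f(x_j)\int_{\lambda_0}^{\lambda_*}\rho(\iota(\lambda,C_0(x_j),\int_0^{x_j}c(\xi,t_n)\,d\xi))\,d\lambda$ with $c(x_j,t_n)f(x_j)\,\Delta\lambda\sum_{l=0}^{N_\lambda-1}\rho(\iota(\lambda_l,C_0(x_j),\Delta x\sum_{r=0}^{j-1}c(x_r,t_n)))$, the prefactor $c(x_j,t_n)f(x_j)$ being bounded by \eqref{eq:Properties} and $f\in C^0$. Setting $g(\lambda)=\rho(\iota(\lambda,C_0(x_j),\int_0^{x_j}c(\xi,t_n)\,d\xi))$, a composition of $C^1$ maps and hence $C^1$ on $[\lambda_0,\lambda_*]$, the composite left-rectangle rule gives $\big|\int_{\lambda_0}^{\lambda_*}g(\lambda)\,d\lambda-\Delta\lambda\sum_{l=0}^{N_\lambda-1}g(\lambda_l)\big|=\mathcal{O}(\Delta\lambda)$. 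Replacing, inside each $\rho(\iota(\lambda_l,C_0(x_j),\cdot))$, the inner integral by its left-rectangle sum perturbs the argument by $\mathcal{O}(\Delta x)$ because $c(\cdot,t_n)\in C^1([0,L])$; since $\xi\mapsto\rho(\iota(\lambda_l,C_0(x_j),\xi))$ is Lipschitz on the relevant compact interval (being $C^1$ there, with Lipschitz constant controlled by $\mu$, $\|\varepsilon_\texttt{A}\|_\infty$, $\|\varepsilon_\texttt{B}\|_\infty$ and $\sup|\rho'|$), each summand is perturbed by $\mathcal{O}(\Delta x)$ uniformly in $l$ and $j$, and summing against $\Delta\lambda$ with $N_\lambda\Delta\lambda$ fixed preserves the $\mathcal{O}(\Delta x)$ bound. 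Hence the reaction part differs from its discretization by $\mathcal{O}(\Delta x)+\mathcal{O}(\Delta\lambda)$.

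Collecting the estimates gives $\delta_j^n=\mathcal{O}(\Delta t)+\mathcal{O}(\Delta x)+\mathcal{O}(\Delta\lambda)$, which vanishes as the three stepsizes tend to zero and is first order in each of them, proving the claim. I expect the main obstacle not to be any single bound --- each is a routine Taylor or rectangle-rule estimate --- but the uniformity bookkeeping: keeping every implied constant independent of $n$ and $j$, which hinges on the a priori bounds from \eqref{eq:Properties}, on the compactness of the range of $\iota$ and of the arguments fed into $\rho$, and on the propagation of $C^1$ (indeed $C^2$ in time) regularity from the known functions to the exact solution, so that stating and invoking this regularity precisely is the only genuinely delicate point.
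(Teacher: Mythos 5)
Your proposal is correct and follows essentially the same route as the paper's proof: a Taylor/mean-value expansion in time combined with $\Delta t/\phi(\Delta t)=1+\mathcal{O}(\Delta t)$, first-order rectangular-rule bounds for the $\lambda$- and $x$-quadratures (the paper's $\varrho$ and $\chi$ terms), Lipschitz propagation of the spatial quadrature error through $\rho\circ\iota$, and a separate $\mathcal{O}(\Delta t)$ term for evaluating the prefactor at $t_{n+1}$ instead of $t_n$. The only cosmetic difference is that you make the Lipschitz and uniformity bookkeeping explicit where the paper absorbs it into a single constant $\kappa_3$.
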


\begin{proof}
    Let $N_x,$ $N_t$ and $N_\lambda$ be positive integers such that $L=N_x \Delta x,$ $T=N_t \Delta t$ and $\lambda_{*} =\lambda_{0}+ N_\lambda \Delta \lambda.$
    The regularity assumptions on the know functions imply that $c(x_j,t)\in C^2([0,T])$ and therefore, from the mean value theorem, there exist $\theta_j^n\in (0,1)$ and $\eta_j^n\in (0,1)$ such that 
    \begin{equation*}
        c(x_j,t_{n}+\Delta t)-c(x_j,t_{n})=\Delta t \ \dfrac{\partial c}{\partial t}(x_j,t_{n}+\theta_j^n\Delta t)=\Delta t \ \dfrac{\partial c}{\partial t}(x_j,t_{n}) + \Delta t^2 \theta_j^n \ \dfrac{\partial^2 c}{\partial t^2}(x_j,t_{n}+\theta_j^n\eta_j^n\Delta t), 
    \end{equation*} 
    for each $n=0,\dots, N_t-1$ and $j=0,\dots, N_x.$
    Furthermore, since $c(x,t_n)\in C^1([0,L])$, the convergence of the rectangular quadrature rule (see, for instance, \cite{Davis}) ensures the existence of positive constants $\kappa_1$ and $\kappa_2$ such that for the terms
    \begin{equation*}
        \begin{split}
            \varrho(\Delta \lambda; x_j, t_n)&= \!\! \scalebox{1.3}{$\displaystyle\int$}_{\!\!\!\!\! \lambda_{0}}^{\lambda_{*}} \!\!\!\!\!\! \rho \! \left(\! \iota \! \left(\lambda, C_0(x_j), \int_0^{x_j} \!\! c(\xi,t_n)  \ d\xi\right)\!\right) \, d\lambda - \Delta \lambda \! \!\sum_{l=0} ^{N_\lambda-1} \! \! \rho \! \left( \! \iota \! \left(\lambda_l, C_0(x_j) ,\, \int_0^{x_j}  \!\!  c(\xi,t_n)  \ d\xi \right)\!\right) \! ,  \\
            \chi(\Delta x; x_j, t_n)&=\int_0^{x_j}    c(\xi,t_n)  \ d\xi-\Delta x \sum_{r=0}^{j-1}c(x_r,t_{n}),
        \end{split}
    \end{equation*}
    the inequalities $|\varrho(\Delta \lambda; \, x_j, t_n)|\leq \kappa_1 \Delta \lambda$ and $|\chi(\Delta x; \, x_j, t_n)|\leq \kappa_2 \Delta x$ hold true, independently of $n=0,\dots, N_t$ and $j=0,\dots, N_x.$ Hence, recalling the properties \eqref{eq:Properties}, straightforward manipulations of the local truncation errror \eqref{eq:local_error} lead to
    \begin{equation*}
        \begin{split}
            |\delta(\Delta x, \Delta t, \Delta \lambda; \,  x_j,t_n)|\leq & -\left|1-\dfrac{\Delta t}{\phi(\Delta t)}\right| \, \dfrac{\partial c}{\partial t}(x_j,t_n)+\dfrac{\Delta t^2}{\phi(\Delta t)} \, \left|\dfrac{\partial^2 c}{\partial t^2}(x_j,t_{n}+\theta_j^n\eta_j^n\Delta t)\right| \\
            & +\kappa_3 \left( \left| \varrho(\Delta \lambda; x_j, t_n) \right| +  \lambda_{*}  \left( \left|\chi(\Delta x; x_j, t_n) \right| - \Delta t \dfrac{\partial c}{\partial t}(x_j,t_{n}+\theta_j^n\Delta t) \right)  \right),
        \end{split}
    \end{equation*}
with $\kappa_3>0$ depending on the bounds of $\rho$, $\iota$ and the known functions in \eqref{eq:continuous_model}, as well as on their derivatives, but not on the stepsizes. Finally, since  \eqref{eq:prop_phi} implies $\frac{\Delta t}{\phi(\Delta t)}=1+\mathcal{O}(h),$ we conclude
\begin{equation*}
        \max_{\substack{n=0,\dots, N_t \\ j=0,\dots, N_x}}|\delta(\Delta x, \Delta t, \Delta \lambda; \,  x_j,t_n)| \leq \kappa (\Delta t+\Delta x + \Delta \lambda),
\end{equation*}
with $\kappa>0$ depending on $\kappa_1,$ $\kappa_2$ and the bounds on the time derivatives of the solution.
\end{proof}

With the following theorem, we prove the convergence of the NSFD scheme \eqref{eq:NSFD_scheme} and show that the corresponding global discretization error
\begin{equation}\label{eq:global_error}
   \mathsf{e}_j^n=\mathsf{e}(\Delta x, \Delta t, \Delta \lambda; \,  x_j,t_n)=c(x_j,t_n)-c_j^n,    \qquad n=0,\dots, N_t, \quad j=0,\dots, N_x,
\end{equation}
vanishes linearly with the sum of the stepsizes. 
\begin{theorem}\label{thm:NSFD_conv} 
    Assume that the given functions in \eqref{eq:continuous_model} are continuously differentiable for $x\in [0,L],$ $t \in [0,T]$ and $\lambda \in [\lambda_{0},\lambda_{*}].$ Let $\{c^n_j\}$ be the approximations of the continuous solution to \eqref{eq:continuous_model} computed by the NSFD scheme \eqref{eq:NSFD_scheme} with $\Delta x=L/N_x,$ $\Delta t=T/N_t,$ $\Delta \lambda=(\lambda_{*}-\lambda_{0})/N_\lambda$ and $N_x,N_t,N_\lambda\in \mathbb{N}.$ Then
    \begin{equation*}
        \max_{\substack{n=0,\dots, N_t \\ j=0,\dots, N_x}}|\mathsf{e}(\Delta x, \Delta t, \Delta \lambda; \,  x_j,t_n)|\longrightarrow 0, \qquad \text{as} \qquad \Delta x+\Delta t+\Delta \lambda \to 0.
    \end{equation*}
    Furthermore, the NSFD method \eqref{eq:NSFD_scheme} is convergent of order $1.$
\end{theorem}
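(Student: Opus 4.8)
The plan is to derive the convergence result from the consistency estimate of Lemma~\ref{lemma:NSFD_Cons} by a standard Lax-type argument, controlling the propagation of the global error $\mathsf{e}_j^n$ through a discrete Gr\"onwall inequality. The starting point is to subtract the exact-solution identity (the definition of $\delta_j^n$ in \eqref{eq:local_error}) from the scheme \eqref{eq:NSFD_scheme}, which yields a recursion for $\mathsf{e}_j^{n+1}$ in terms of $\mathsf{e}_j^n$, the truncation error $\delta_j^n$, and the difference of the two nonlinear right-hand sides evaluated at $c$ and at $c_j^n$. Using the linearly implicit structure --- i.e. rewriting the error recursion analogously to \eqref{eq:NSFD_scheme_rapporto} --- the coefficient multiplying $\mathsf{e}_j^{n+1}$ is $1 + \phi(\Delta t)\,\Delta\lambda\, f(x_j)\sum_l \rho(\cdots) \geq 1$, so that step is a contraction and does not amplify the error.

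Next I would estimate the nonlinear term. The right-hand side depends on $c$ only through the spatial partial sum $\Delta x \sum_{r=0}^{j-1} c_r^n$ inside $\iota$, and through the explicit factor $c(x_j,t_{n+1})$ (resp. $c_j^{n+1}$); since the paper assumes $\rho$, $\iota$, $\varepsilon_\texttt{A}$, $\varepsilon_\texttt{B}$, $I$, $f$ continuously differentiable on compact sets and the relevant arguments stay in compact sets (by the boundedness in Theorem~\ref{thm:NSFD_Dynamic_Consistency} and \eqref{eq:Properties}), all these maps are Lipschitz there. Hence the difference of the right-hand sides is bounded by a constant times $|\mathsf{e}_j^{n+1}|$ (from the explicit factor, absorbed into the left side after multiplying through by $\phi(\Delta t)$, using that $\phi(\Delta t)=\Delta t+\mathcal{O}(\Delta t^2)$) plus a constant times $\Delta x \sum_{r=0}^{j-1} |\mathsf{e}_r^n| \leq L \max_{0\le r\le N_x}|\mathsf{e}_r^n|$. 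Introducing $E^n := \max_{0\le j\le N_x} |\mathsf{e}_j^n|$ and $\bar\delta := \max_{n,j}|\delta_j^n|$, the recursion collapses to $E^{n+1} \leq (1 + C\Delta t) E^n + \Delta t\, \bar\delta$ for a constant $C$ independent of the stepsizes, with $E^0 = 0$. The discrete Gr\"onwall lemma then gives $E^n \leq \bar\delta\, (e^{C T}-1)/C$, and Lemma~\ref{lemma:NSFD_Cons} bounds $\bar\delta \leq \kappa(\Delta x + \Delta t + \Delta\lambda)$, which yields both the claimed convergence as $\Delta x + \Delta t + \Delta\lambda \to 0$ and the order-$1$ statement.

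The main obstacle is bookkeeping the coupled spatial dependence cleanly: because $c_j^{n+1}$ is computed from the partial sum of $c_r^n$ at the \emph{previous} time level, the error at a spatial point feeds on all earlier spatial points at the earlier time, so one must take the spatial maximum $E^n$ before invoking Gr\"onwall rather than trying to iterate in $j$. A secondary technical point is verifying that the arguments of $\iota$ and $\rho$ --- namely $\varepsilon_\texttt{B}(\lambda_l) C_0(x_j) + (\varepsilon_\texttt{A}(\lambda_l)-\varepsilon_\texttt{B}(\lambda_l))\Delta x\sum_{r=0}^{j-1} c_r^n$ --- remain in a fixed compact set uniformly in the stepsizes; this follows since $0 < c_r^n \le c_r^0 \le \|c^0\|_\infty$ by Theorem~\ref{thm:NSFD_Dynamic_Consistency}, so the partial sum is bounded by $L\|c^0\|_\infty$, and one gets a uniform Lipschitz constant for $\rho\circ\iota$ there. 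Once these uniform bounds are in place, the remaining manipulations are the routine ones sketched above.
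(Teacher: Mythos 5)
Your proposal follows essentially the same route as the paper: subtract the NSFD scheme from the exact-solution identity defining $\delta_j^n$, use the boundedness from Theorem \ref{thm:NSFD_Dynamic_Consistency} and the Lipschitz continuity of $\rho\circ\iota$ on the relevant compact sets to obtain a linear error recursion, and close with a discrete Gr\"onwall inequality combined with Lemma \ref{lemma:NSFD_Cons}. Your bookkeeping is in fact marginally cleaner --- exploiting the non-negative sign of the implicit coefficient $1+\phi(\Delta t)\,\Delta\lambda\, f(x_j)\sum_l\rho(\cdot)$ avoids the paper's ``sufficiently small $\Delta t$'' restriction, and taking the spatial maximum $E^n$ before invoking the time Gr\"onwall replaces the paper's per-$j$ Gronwall-type estimate --- but the underlying argument is the same.
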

\begin{proof}
    Evaluating equation \eqref{eq:continuous_model} at $(x_j,t_n),$ for $n=0,\dots, N_t$ and $j=0,\dots, N_x,$ and subtracting \eqref{eq:NSFD_scheme} from it yields
    \begin{equation*}
        \begin{split}
            \mathsf{e}_j^{n+1}\!&=\mathsf{e}_j^{n}-\phi(\Delta t) \left\{\delta_j^n+ \mathsf{e}_j^{n+1} f(x_j) \ \Delta \lambda \! \sum_{l=0} ^{N_\lambda-1} \!\! \rho\left( \! \iota \! \left(\lambda_l,\, C_0(x_j) ,\, \Delta x \sum_{r=0}^{j-1}c(x_r,t_{n}) \right)\!\right) \right\} \\
            & -\phi(\Delta t) c_j^{n+1} f(x_j)   \Delta \lambda \!\! \sum_{l=0} ^{N_\lambda-1} \!\! \left\{ \! \rho \! \left( \! \iota \! \left(\lambda_l, C_0(x_j) , \Delta x \sum_{r=0}^{j-1}c(x_r,t_{n}) \right)\!\right) - \rho \! \left( \! \iota \! \left(\lambda_l, C_0(x_j) , \Delta x \sum_{r=0}^{j-1}c_r^n \right)\!\right) \! \right\} \! ,
        \end{split}
    \end{equation*}
    with $\delta_j^n$ and $\mathsf{e}_j^n$ denoting the local and global discretization errors defined in \eqref{eq:local_error} and \eqref{eq:global_error}, respectively. The regularity of the given functions, the bound on the NSFD numerical solution (see Theorem \ref{thm:NSFD_Dynamic_Consistency}) and the property in \eqref{eq:prop_phi} assure the existence of positive constants $\kappa_1$ and $\kappa_2$ such that, for a sufficiently small temporal stepsize $\Delta t,$ the inequality
    \begin{equation*}
        |\mathsf{e}_j^{n+1}| \leq \dfrac{\phi(\Delta t)}{1-\kappa_1 \phi(\Delta t)} |\delta_j^n| +  \dfrac{\kappa_2 \Delta x}{1-\kappa_1 \phi(\Delta t)}\sum_{r=0}^{j-1}|\mathsf{e}_r^{n}|+\dfrac{|\mathsf{e}_j^{n}|}{1-\kappa_1 \phi(\Delta t)}, \qquad \qquad
        \begin{array}{l}
            n=0,\dots, N_t-1,  \\
            j \, =0,\dots, N_x, 
        \end{array}
    \end{equation*}
    holds true. From the discrete Gronwall-type inequality in \cite[Theorem 11.1]{Linz_1985},
    \begin{equation*}
        \max_{0\leq n \leq N_t}|\mathsf{e}_j^n| \leq  \left( \dfrac{(1+\kappa_2 N_x N_\lambda)^{N_x}-1+\kappa_1 \phi(\Delta t)}{\kappa_1 (1-\kappa_1 \phi(\Delta t))^{N_x}}\right) \max_{0\leq n \leq N_t}|\delta_j^n| , \qquad \quad \quad \text{for each} \;\; j \, =0,\dots, N_x
    \end{equation*}
    and therefore, from the consistency outcome of Lemma \ref{lemma:NSFD_Cons}, 
    \begin{equation*}
            \max_{\substack{n=0,\dots, N_t \\ j=0,\dots, N_x}}|\mathsf{e}(\Delta x, \Delta t, \Delta \lambda; \,  x_j,t_n)| \leq \Tilde{\kappa} (\Delta t+\Delta x + \Delta \lambda),
    \end{equation*}
    which completes the proof.
\end{proof}

\section{Direct quadrature method}\label{sec:DQ}
In Section \ref{sec:NSFD} we introduced a linearly implicit, positivity-preserving NSFD discretization for the integro-differential model \eqref{eq:continuous_model}. Despite its dynamical consistency, the first order convergence of \eqref{eq:NSFD_scheme} may represent a limitation for realistic simulations over extended integration intervals. In order to devise high-order unconditionally positive methods, here we adopt a different approach built upon a suitable reformulation of the evolution operator in \eqref{eq:continuous_model}.

In what follows, we address the following non-linear implicit Volterra  integral equation \cite{MPV_implicit}
\begin{equation}\label{eq:VFIE_model}
    \begin{cases}
         \displaystyle \log\left(\dfrac{c(x,t)}{c^0(x)}\right)=-f(x) \scalebox{1.3}{$\displaystyle\int$}_{\!\!\!\!\! 0}^{t}  \scalebox{1.3}{$\displaystyle\int$}_{\!\!\!\!\! \lambda_{0}}^{\lambda_{*}} \!\!\! \rho\left(\iota\left(\lambda,\, C_0(x), \int_0^x    c(\xi,\tau)  \ d\xi\right)\right) \ d\lambda \ d \tau,  \\
         \displaystyle \iota  \left(  \lambda,\, C_0(x),  \int_0^x   c(\xi,t) \ d\xi  \right)  =  I(\lambda) \exp  \left\{  -  \mu  \left(  \varepsilon_\texttt{B}(\lambda) C_0(x) +  (\varepsilon_\texttt{A}(\lambda) -\varepsilon_\texttt{B}(\lambda))  \int_0^x    c(\xi,t)  \ d\xi  \right)  \right\},
    \end{cases} 
\end{equation}
which is equivalent to \eqref{eq:continuous_model} and is derived from it through integration with respect to time, taking into account the properties of the continuous solution in \eqref{eq:continuous_model}.

Consider the uniform meshes $\{x_j=j \Delta x, \ j=0,1,\dots \},$ $\{t_n=n \Delta t, \ n=0,1,\dots  \}$ and $\{\lambda_l=\lambda_{0}+l \Delta \lambda, \ l=0,1,\dots, N_\lambda \},$ where $\Delta x, \Delta t, \Delta \lambda\in \mathbb{R}^+,$ are the spatial, temporal and frequency stepsizes, respectively. We approximate the integral terms in \eqref{eq:VFIE_model} by the means of the $(n_0-1)$-th Gregory quadrature rule (we refer to \cite{MPV_mixing} and references therein for further details), with $n_0\geq 1.$ Let $\{w_{\nu \mu}\},$ $\mu=0,\dots,n_0-1$ and $\{\omega_\nu\},$ $\nu\geq n_0$ be the starting and convolution weights, respectively. As detailed in \cite[Section 2.6]{Brunner}, the weights are positive and satisfy 
\begin{equation}\label{eq:weights_bounds}
	\underset{\nu\geq 0}{\sup}\ \underset{0\leq \mu < n_0}{\max}w_{\nu\mu}\leq \tilde{w} <+\infty 
	\;\;\;\;\;\;\;\;\; \mbox{and} \;\;\;\;\;\;\;\;\; 
	\underset{\nu\geq 0}{\sup}\;\omega_\nu\leq \tilde{\omega} <+\infty.
\end{equation} 
We then define the $n_0$-steps Direct Quadrature (DQ) \cite[Section 3.2]{Brunner} scheme for \eqref{eq:VFIE_model} as follows 
\begin{equation}\label{eq:DQ_method}
    \begin{cases} 
        \log\!\left(\dfrac{c_j^n}{c_j^0}\right)\!=\!\displaystyle -\Delta t \Delta \lambda \, f(x_j) \! \sum_{p=n_0}^{n} \! \omega_{n-p} \Bigg\{\!\sum_{q=0}^{n_0-1} \! w_{N_\lambda q} \ \rho\!\left( \! \iota \! \left(\lambda_q,\, C_0(x_j) ,\, \Delta x \!\! \sum_{k=0}^{n_0-1} \! w_{jk}c_k^p+\Delta x \!\! \sum_{h=n_0}^{j}\omega_{j-h}c_h^p\right)\!\right) \\
        \phantom{log\!\left(\dfrac{c_j^n}{c_j^0}\right)\!=\!} \displaystyle + \sum_{s=n_0}^{N_\lambda}\omega_{N_\lambda-s} \ \rho\!\left( \! \iota \! \left(\lambda_s,\, C_0(x_j) ,\, \Delta x \! \sum_{k=0}^{n_0-1}w_{jk}c_k^p+\Delta x \! \sum_{h=n_0}^{j}\omega_{j-h}c_h^p\right)\!\right) \Bigg\}+ \sigma_j , \\
        \displaystyle \iota \! \left(\!\lambda_l,\, C_0(x_j) ,\, \Delta x \!\! \sum_{k=0}^{n_0-1} \! w_{jk}c_k^p+\Delta x \!\! \sum_{h=n_0}^{j} \! \omega_{j-h}c_h^p\right)\!=I(\lambda_l) \exp \! \Bigg\{ \!\! - \mu  \Bigg( \! \varepsilon_\texttt{B}(\lambda_l) C_0(x_j) +  (\varepsilon_\texttt{A}(\lambda_l) -\varepsilon_\texttt{B}(\lambda_l)) \\
        \phantom{log\!\left(\dfrac{c_j^n}{c_j^0}\right)\!=\!} \displaystyle\left(\Delta x \! \sum_{k=0}^{n_0-1}w_{jk}c_k^p+\Delta x \! \sum_{h=n_0}^{j}\omega_{j-h}c_h^p\right)  \Bigg) \! \Bigg\}, \qquad \qquad \qquad\qquad\qquad\qquad \ l=0,\dots, N_\lambda,
    \end{cases}
\end{equation}
where $c_j^0=c^0(x_j)>0$ and $c_j^n\approx c(x_j,t_n),$ for $n,j\geq n_0,$ the starting values $c_k^m,$ $0\leq k,m\leq n_0-1,$  are given and 
\begin{equation*}
    \begin{split}
        \sigma_j=&-\Delta t \Delta \lambda \, f(x_j)\sum_{m=0}^{n_0-1} w_{nm} \Bigg\{\sum_{i=0}^{n_0-1}w_{N_\lambda i} \ \rho\left( \! \iota \! \left(\lambda_i,\, C_0(x_j) ,\, \Delta x \sum_{k=0}^{n_0-1}w_{jk}c_k^m+\Delta x\sum_{h=n_0}^{j}\omega_{j-h}c_h^m\right)\!\right) \\ &+\sum_{l=n_0}^{N_\lambda}\omega_{N_\lambda-l} \ \rho\left( \! \iota \! \left(\lambda_l,\, C_0(x_j) ,\, \Delta x \sum_{k=0}^{n_0-1}w_{jk}c_k^m+\Delta x\sum_{h=n_0}^{j}\omega_{j-h}c_h^m\right)\!\right) \Bigg\}. 
    \end{split}
\end{equation*}

Here, we demonstrate the well-posedness of the DQ method \eqref{eq:DQ_method}
and prove the unconditional positivity and boundedness of the corresponding numerical solution.

\begin{theorem}\label{thm:DQ_Properties}
    Consider the $n_0$-steps discrete equations in \eqref{eq:DQ_method}, under the assumptions specified in (\hyperref[ass:1]{A}). Suppose that the initial values are given and satisfy $0<c_j^m\leq c^0(x_j)$, for $0 \leq m\leq n_0 - 1$ and $j=0,\dots,N_x.$ Then, for each \(\Delta x, \Delta t, \Delta \lambda \in \mathbb{R}^+\) and $n=n_0,\dots,N_t$, the non-linear system \eqref{eq:DQ_method} admits a component-wise positive solution $\bm{c}^n=[c_0^n , \dots , c_{N_x}^n]^\mathsf{T}$ lying in
    \begin{equation}\label{eq:Omega}
        \Omega = \left\{[x_0,\dots,x_{N_x}]^\mathsf{T} : \ \bar{c}(\Delta x, \Delta t, \Delta \lambda) \leq x_i \leq \max_{0 \leq j \leq N_x}\! c_j^0, \ \ \text{for all} \ \ i = 0, \dots, N_x\right\} \subset \mathbb{R}^{N_x + 1},
    \end{equation}
    where the lower bound is given by
    \begin{equation}\label{eq:DQ_bound_inferiore}
        \bar{c}(\Delta x, \Delta t, \Delta \lambda) = \left(\min_{0 \leq j \leq N_x}\!c_j^0\right) \exp\left\{-\Delta t (N_t + 1) \ \Delta \lambda (N_\lambda + 1) \, \mathsf{W}^2 \, \mathsf{R} \, \mathsf{F}_{\Delta x}\right\},
    \end{equation}
    with $\mathsf{F}_{\Delta x} = \max_{[0, N_x \Delta x]} f(x),$ $\mathsf{R} = \max_{\mathbb{R}^+_0} \rho(x)$ and, recalling \eqref{eq:weights_bounds}, $\mathsf{W}=\max\{\tilde{w}, \tilde{\omega}\}.$ Furthermore, such a solution is unique if $\partial_x f(x_j)\geq 0$ and $\varepsilon_{\texttt{B}}(\lambda_l)\geq\varepsilon_{\texttt{A}}(\lambda_l)$ hold true for each $j=0,\dots, N_x$ and $l=0,\dots, N_\lambda.$
\end{theorem}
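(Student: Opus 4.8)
The plan is to exploit the lower‑triangular coupling of \eqref{eq:DQ_method} in the spatial index: for a fixed time level $n\ge n_0$ (the approximations at the preceding levels being already computed), the equation for $c_j^n$ involves only values $c_i^p$ with $i\le j$ and $p\le n$, so at level $n$ one may determine $c_0^n,c_1^n,\dots,c_{N_x}^n$ successively. Rewriting the first line of \eqref{eq:DQ_method} in the self‑referential explicit form
\begin{equation*}
c_j^n=c_j^0\exp\!\left\{-\Delta t\,\Delta\lambda\, f(x_j)\,\mathcal{G}_j\!\left(c_0^n,\dots,c_j^n\right)+\sigma_j\right\},
\end{equation*}
with $\mathcal{G}_j$ the non‑negative double sum over the frequency nodes and the past time levels, the problem at level $n$ becomes that of finding a fixed point of the map $\Phi:\Omega\to\mathbb{R}^{N_x+1}$ with $j$‑th component $\Phi_j(\bm x)=c_j^0\exp\{-\Delta t\,\Delta\lambda\,f(x_j)\,\mathcal{G}_j(x_0,\dots,x_j)+\sigma_j\}$, $\Omega$ being the box in \eqref{eq:Omega}.

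First I would settle existence together with the two bounds. Since $f,\rho\ge 0$ by (\hyperref[ass:1]{A}), the starting and convolution weights are positive, and the initial data and $C_0$ are non‑negative, every argument of $\rho$ appearing in $\mathcal{G}_j$ and in $\sigma_j$ is non‑negative whenever $\bm x\in\Omega$; hence $0<\Phi_j(\bm x)\le c_j^0\le\max_{0\le k\le N_x}c_k^0$. For the lower bound, replacing $\rho$ by $\mathsf{R}$, every weight by $\mathsf{W}$ and $f$ by $\mathsf{F}_{\Delta x}$, and counting at most $N_t+1$ terms in the time sum (the $\sigma_j$ contribution included) and at most $N_\lambda+1$ in the frequency sum, the exponent is bounded below by $-\Delta t\,(N_t+1)\,\Delta\lambda\,(N_\lambda+1)\,\mathsf{W}^2\,\mathsf{R}\,\mathsf{F}_{\Delta x}$, so $\Phi_j(\bm x)\ge\bar c(\Delta x,\Delta t,\Delta\lambda)$ as in \eqref{eq:DQ_bound_inferiore}. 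Thus $\Phi(\Omega)\subseteq\Omega$; as $\rho$, $\iota$ and the known functions are continuous, $\Phi$ is continuous, and $\Omega$ is non‑empty, compact and convex, Brouwer's fixed point theorem yields a solution $\bm c^n\in\Omega$, automatically component‑wise positive and bounded. (Equivalently, one solves for one component at a time by the intermediate value theorem on $[\bar c,\max_k c_k^0]$.)

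For uniqueness I would argue by induction on $j$, again via the triangular structure. Note first that every solution in $\Omega$ satisfies $\bar c\le c_j^n\le c_j^0$, since $\mathcal{G}_j\ge 0$ and $\sigma_j\le 0$. Assuming that two solutions share their first $j$ components, the equation for $c_j^n$ becomes a scalar fixed point equation $y=\Psi_j(y)$, with $\Psi_j(y)=c_j^0\exp\{-\Delta t\,\Delta\lambda\,f(x_j)[A_j+\omega_0 B_j(y)]+\sigma_j\}$, where $A_j\ge 0$ collects the already‑known past‑level contributions and $B_j(y)$ is the frequency sum at $t_n$, with the discrete spatial integral $X_j^n$ evaluated at $c_j^n=y$. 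It then suffices to show that $\Psi_j$ is non‑increasing on $[\bar c,c_j^0]$, for then $\Psi_j(y)-y$ is strictly decreasing there and has at most one zero, which, with existence, gives exactly one. Since $\Psi_j$ is decreasing in $B_j$, this reduces to the monotonicity of $X\mapsto\rho(\iota(\lambda_l,C_0(x_j),X))$, and from $\partial_X(\rho\circ\iota)=\rho'(\iota)\,\mu\,(\varepsilon_\texttt{B}(\lambda_l)-\varepsilon_\texttt{A}(\lambda_l))\,\iota$ together with $\rho'(X)=a_1(1-X^2)/(X^2+a_2X+1)^2\ge 0$ for $X\le 1$ one sees that the two structural hypotheses $\varepsilon_\texttt{B}(\lambda_l)\ge\varepsilon_\texttt{A}(\lambda_l)$ and $\partial_x f(x_j)\ge 0$, combined with the bounds $0<c_i^n\le c_i^0$ already established and with $C_0(x_j)=\int_0^{x_j}c^0(\xi)\,d\xi$, are exactly what is needed to keep the argument of $\rho$ in $[0,1]$, hence $\partial_X(\rho\circ\iota)\ge 0$ along the solution branch.

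The step I expect to be the main obstacle is precisely this last one: establishing, at the discrete level, the counterpart of the continuous bound $\int_0^x c(\xi,t)\,d\xi\le C_0(x)$ that confines $\iota$ to $[0,1]$, where $\rho$ is increasing. This requires combining the component‑wise inequality $c_i^n\le c_i^0$ with a careful estimate — uniform in the node $\lambda_l$ — of the Gregory‑weighted spatial sum $X_j^n$ against $C_0(x_j)$, and it is here that the monotonicity assumptions on $f$ and on $\varepsilon_\texttt{B}-\varepsilon_\texttt{A}$ come into play; the remaining ingredients (continuity of $\Phi$, the Brouwer argument, the term‑counting behind $\bar c$, and the scalar monotone fixed point conclusion) are routine.
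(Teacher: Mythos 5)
Your existence argument is essentially the paper's: the same self-map of the box $\Omega$ (the paper's $\bm{G}^n$), the same counting of at most $N_t+1$ time terms and $N_\lambda+1$ frequency terms with $\rho\le\mathsf{R}$, $f\le\mathsf{F}_{\Delta x}$ and all weights bounded by $\mathsf{W}$ to obtain the upper bound $c_j^0$ and the lower bound $\bar c$, followed by Brouwer's theorem. That half is correct. For uniqueness you take a genuinely different route: the paper does not induct on the spatial index, but regards $\bm{G}^n$ as a compact \emph{decreasing} operator on $\mathbb{R}^{N_x+1}$ ordered by the non-negative cone and invokes the abstract fixed-point theory of decreasing operators (the fixed-point set is directed, hence a singleton). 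Your plan — exploit the triangular coupling in $j$, freeze the first components, and show the scalar map $\Psi_j$ is non-increasing so that $\Psi_j(y)-y$ has at most one zero — is more elementary and avoids the external reference. Both routes, however, rest on the same analytic ingredient: that $X\mapsto\rho(\iota(\lambda_l,C_0(x_j),X))$ is non-decreasing on the range of discrete spatial integrals generated by $\Omega$, which requires $\iota\le 1$ there because $\rho'(X)=a_1(1-X^2)/(X^2+a_2X+1)^2$ changes sign at $X=1$.

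This is precisely where your proposal has a genuine gap, and you flag it yourself. For $\bm{x}\in\Omega$ the weighted sum $X_j=\Delta x\,\bm{\Gamma}_j^{\mathsf T}\bm{x}$ is only bounded by roughly $x_j\max_k c_k^0$, which need not be $\le C_0(x_j)=\int_0^{x_j}c^0(\xi)\,d\xi$; hence the exponent $-\mu\left[\varepsilon_\texttt{A}(\lambda_l)X_j+\varepsilon_\texttt{B}(\lambda_l)(C_0(x_j)-X_j)\right]$ is not automatically non-positive when $\varepsilon_\texttt{B}>\varepsilon_\texttt{A}$, and $\iota\le 1$ does not follow from $I\le 1$ alone. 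Until the discrete inequality $X_j\le C_0(x_j)$ (or some substitute confining $\iota$ to $[0,1]$) is established, the monotonicity of $\Psi_j$ — and therefore your uniqueness claim — is unproved. To be fair, the paper asserts the same monotonicity of $\rho\circ\iota$ ``whatever the stepsizes'' with no more justification than you give, so the missing step is shared; but in your write-up it is explicitly left open, so the proof is incomplete as it stands. Two further points. First, the hypothesis $\partial_x f\ge 0$ cannot play the role you assign to it: $f(x_j)$ enters the $j$-th equation only as a fixed non-negative multiplicative constant, so its monotonicity in $x$ has no bearing on whether $X_j\le C_0(x_j)$ or on the sign of $\Psi_j'$; in the paper it serves to verify the hypotheses of the decreasing-operator theorem, not the pointwise monotonicity of $\rho\circ\iota$. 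Second, the spatial coupling is not lower triangular for the first $n_0$ nodes, since the starting weights $w_{jk}$ bring $c_k^n$, $k\le n_0-1$, into every equation with $j\ge n_0$; your induction therefore needs a separate treatment of these components, consistent with the paper's convention that they are prescribed.
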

\begin{proof}
     We preliminary observe that, from the hypotheses, $\bm{c}^m=\begin{bmatrix}c_0^m & \dots & c_{N_x}^m\end{bmatrix}^\mathsf{T}$ belongs to $\Omega$ for each $m=0,\dots,n_0-1.$ Furthermore, the non-negativity of $a_1$ and $a_2$ in \eqref{eq: rho} and the assumptions (\hyperref[ass:1]{A}) imply that $\mathsf{R}$ and  $\mathsf{F}$ are positive and finite. In what follows, we denote by 
    \begin{equation*}
        \begin{split}
            \bm{\Gamma}_{j}&=[
            w_{j  0} ,\,  w_{j  1} ,\, \dots ,\, w_{j n_0-1} ,\, \omega_{j-n_0} ,\, \omega_{j-n_0-1} ,\, \dots ,\, \omega_0 ,\, 0 ,\, \dots ,\,  0 
            ]^\mathsf{T} \in \mathbb{R}^{N_x+1}, \qquad \  j=n_0,\dots,N_x, \\
            \bm{\Lambda}&=[
            w_{N_\lambda  0} ,\,  w_{N_\lambda  1} ,\, \dots ,\, w_{N_\lambda  n_0-1} ,\, \omega_{N_\lambda-n_0} ,\, \omega_{N_\lambda-n_0-1} ,\, \dots ,\, \omega_0  
            ]^\mathsf{T} \in \mathbb{R}^{N_\lambda+1}, \\
            \bm{\Psi}^n&=[
            w_{n  0} ,\,  w_{n 1} ,\, \dots ,\, w_{n  n_0-1} ,\, \omega_{n-n_0} ,\, \omega_{n-n_0-1} ,\, \dots ,\, \omega_0
            ]^\mathsf{T} \in \mathbb{R}^{n+1}, \qquad \qquad \qquad \ \ n=n_0,\dots,N_t.
        \end{split}
    \end{equation*}
    Consider a fixed time step $n\geq n_0$ and assume that $\bm{c}^\nu \, =[c_0^\nu , c_1^\nu , \dots , c_{N_x}^\nu]^\mathsf{T}\in \Omega,$ for each $n_0\leq \nu \leq n-1.$ Define the functions 
    $\bm{\mathcal{M}}_{j}^n \ : \mathbb{R}^{N_x + 1} \to \mathbb{R}^{(N_\lambda+1)\times(n+1)}$ and $\bm{G}^n \ : \mathbb{R}^{N_x + 1} \to \mathbb{R}^{N_x + 1}$ as follows
    \begin{equation}\label{eq:Forma_G}
        \begin{split}
             \bm{\mathcal{M}}_{j}^n(\bm{x})&=
             \begin{bmatrix}
                 \rho \! \left(  \iota \! \left(\lambda_0, C_0(x_j) , \Delta x  \bm{\Gamma}_{j}^\mathsf{T}\bm{c}^0 \right) \! \right) & \dots &  \rho \! \left(  \iota \! \left(\lambda_{0}, C_0(x_j) , \Delta x  \bm{\Gamma}_{j}^\mathsf{T}\bm{x} \right) \! \right)  \\
                 \rho \! \left(  \iota \! \left(\lambda_1, C_0(x_j) , \Delta x  \bm{\Gamma}_{j}^\mathsf{T}\bm{c}^0 \right) \! \right) & \dots &  \rho \! \left(  \iota \! \left(\lambda_{1}, C_0(x_j) , \Delta x  \bm{\Gamma}_{j}^\mathsf{T}\bm{x} \right) \! \right)\\
                 \dots & \dots  & \dots  \\
                 \rho \! \left(  \iota \! \left(\lambda_{N_\lambda}, C_0(x_j) , \Delta x  \bm{\Gamma}_{j}^\mathsf{T}\bm{c}^0 \right) \! \right) & \dots &  \rho \! \left(  \iota \! \left(\lambda_{N_{\lambda}}, C_0(x_j) , \Delta x  \bm{\Gamma}_{j}^\mathsf{T}\bm{x} \right) \! \right)
            \end{bmatrix}, \qquad \quad n_0\leq j\leq N_x, \\
            \bm{G}^n(\bm{x})&=\bm{c^0} \odot \left( \sum_{k=0}^{n_0-1}\bm{\epsilon}_{k+1}c_k^n+ \sum_{j=n_0}^{N_x} \bm{\epsilon}_{j} \exp\left\{-\Delta t \Delta \lambda \, f(x_j) \, \left(\bm{\mathcal{M}}_{j}^n\!\left(\bm{x}\right) \cdot \bm{\Psi}^n\right)^\mathsf{T} \!\! \cdot \bm{\Lambda}  \right\}\right),
        \end{split}
    \end{equation}
    where $\odot$ denotes the element-wise Hadamard product and $\bm{\epsilon}_j\in \mathbb{R}^{N_x + 1}$ represents the $j-$th vector of the canonical basis. The existence of the numerical solution to the DQ scheme \eqref{eq:DQ_method} is here addressed by investigating the existence of a fixed point for $\bm{G}^n(\bm{x}).$ The non-negativity assumptions in (\hyperref[ass:1]{A}) imply the global bounds  $\|\bm{\mathcal{M}}_{j}^n(\bm{x})\|_\infty\leq (N_t + 1)\mathsf{R}$ and 
    \begin{equation*}
        \big| \!\left(\bm{\mathcal{M}}_{j}^n\!\left(\bm{x}\right) \cdot \bm{\Psi}^n\right)^\mathsf{T} \!\! \cdot \bm{\Lambda}\big|\leq (N_\lambda+1) \|\bm{\mathcal{M}}_{j}^n(\bm{x})\|_\infty \|\bm{\Psi}^n\|_\infty \|\bm{\Lambda}\|_\infty \leq \mathsf{W}^2\mathsf{R}(N_t+1)(N_\lambda+1),\quad \forall \bm{x}\in \mathbb{R}^{N_x + 1}.
    \end{equation*}  
    It then follows that the restriction of the function $\bm{G}^n(\bm{x})$ to the set $\Omega$ is a continuous self-mapping and a straightforward application of the Brouwer's fixed-point theorem (see, for instance, \cite{Rudin1991}) yields the existence result.\\
    In order to prove the uniqueness of the solution to \eqref{eq:DQ_method} with no limitations on the stepsizes, we apply the findings of  \cite{Uniq_Fix} to the Banach space $\mathbb{R}^{N_x+1}$ ordered by the cone of component-wise non-negative vectors. From the assumptions $(\hyperref[ass:1]{A})$ and the additional hypotheses, $\rho(i(\lambda_l,C_0(x_j),\bm{\Gamma}_{j}^\mathsf{T}\bm{x}))$ is a non-negative and increasing function with respect to $\bm{x}$, whatever the values of $l,$ $j$ and the stepsizes. Consequently, $\bm{G}^n(\bm{x})$ is a compact decreasing operator over $\Omega$ and, from \cite[Theorem 2.1]{Uniq_Fix}, the non empty set $\text{Fix}(\bm{G}^n)=\{\bm{x}\in \Omega \ : \ \bm{x}=\bm{G}^n(\bm{x}) \}$ is directed. Thus, all the hypotheses of \cite[Theorem B]{Uniq_Fix} are satisfied, which completes the proof.
\end{proof}

\begin{remark}
    The unconditional uniqueness of the solution to \eqref{eq:DQ_method} holds under the non-negativity assumptions for the functions $\partial_x f(x)$ and $\varepsilon_{\texttt{B}}(\lambda)-\varepsilon_{\texttt{A}}(\lambda)$. When these requirements are not met,  uniqueness can still be established for sufficiently small stepsizes through the Banach–Caccioppoli theorem (cf. \cite[Theorem 9.23]{Rudin1976}) applied to the function $\bm{G}^n$ in \eqref{eq:Forma_G}. 
\end{remark}

Theorem \ref{thm:DQ_Properties} establishes the DQ discretization \eqref{eq:DQ_method} as a positivity and boundedness preserving numerical method. In comparison to the dynamically consistent NSFD scheme \eqref{eq:NSFD_scheme}, it exhibits a higher level of complexity and does not guarantee the unconditional monotonicity of the numerical solution. However, we will demonstrate that \eqref{eq:DQ_method} achieves high-order convergence that overcomes the accuracy limitations associated with non-standard discretizations. As a first step, we investigate the behavior of the error arising from the approximation of the composition of non-local integral operators through the embedding of quadrature rules with Gregory weights. More specifically, we address the local discretization error of \eqref{eq:DQ_method}, defined as follows
\begin{equation}\label{eq:DQ_local_error}
    \allowdisplaybreaks
    \begin{split}
        \delta_j^n&=\delta(\Delta x, \Delta t, \Delta \lambda; \,  x_j,t_n) \\ 
        &=\displaystyle \scalebox{1.3}{$\displaystyle\int$}_{\!\!\!\!\! 0}^{t_n} \!\! \scalebox{1.3}{$\displaystyle\int$}_{\!\!\!\!\! \lambda_{0}}^{\lambda_{*}} \!\!\! \rho\left(\iota\left(\lambda,\, C_0(x_j), \int_0^{x_j}    c(\xi,\tau)  \ d\xi\right)\right) \ d\lambda \ d \tau \\
        &-\Delta t \Delta \lambda  \Bigg\{ \sum_{m=0}^{n_0-1} \! w_{n m} \Bigg( \, \sum_{i=0}^{n_0-1} \! w_{N_\lambda i} \ \rho \! \left( \! \iota \! \left(\lambda_i, C_0(x_j) , \Delta x \! \sum_{k=0}^{n_0-1} \! w_{jk} \, c(x_k,t_m)+\Delta x \! \sum_{h=n_0}^{j} \! \omega_{j-h} \, c(x_h,t_m) \right)\!\right) \\
        &+ \sum_{l=n_0}^{N_\lambda} \omega_{N_\lambda-l} \ \rho \! \left( \! \iota \! \left(\lambda_l, C_0(x_j) , \Delta x \! \sum_{k=0}^{n_0-1}w_{jk} \, c(x_k,t_m)+\Delta x \! \sum_{h=n_0}^{j}\omega_{j-h} \, c(x_h,t_m) \right)\!\right)\Bigg) \\
        &+ \sum_{p=n_0}^{n} \omega_{n-p} \Bigg( \, \sum_{q=0}^{n_0-1} w_{N_\lambda q} \ \rho \! \left( \! \iota \! \left(\lambda_q, C_0(x_j) , \Delta x \! \sum_{k=0}^{n_0-1}w_{jk} \, c(x_k,t_p)+\Delta x \! \sum_{h=n_0}^{j}\omega_{j-h} \, c(x_h,t_p) \right)\!\right) \\
        &+ \sum_{s=n_0}^{N_\lambda} \omega_{N_\lambda-s} \ \rho \! \left( \! \iota \! \left(\lambda_s, C_0(x_j) , \Delta x \! \sum_{k=0}^{n_0-1}w_{jk} \, c(x_k,t_p)+\Delta x \! \sum_{h=n_0}^{j}\omega_{j-h} \, c(x_h,t_p) \right)\!\right)\Bigg) \Bigg\}, 
    \end{split}
\end{equation}
for $j,n\geq n_0,$ and prove the high order consistency of the DQ scheme \eqref{eq:DQ_method}.

\begin{lemma}\label{lemma:DQ_Cons}
    Assume that the given functions in \eqref{eq:VFIE_model}, describing problem \eqref{eq:continuous_model}, satisfy $f,c^0\in C^{n_0+1}([0,L]),$ $I\in C^{n_0+1}([\lambda_{0},\lambda_{*}]),$ $\rho\in C^{n_0+1}(\mathbb{R}^+_0).$ Then the DQ method \eqref{eq:DQ_method} is consistent with \eqref{eq:VFIE_model}, of order $n_0+1$. 
\end{lemma}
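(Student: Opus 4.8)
The plan is to split the local error $\delta_j^n$ of \eqref{eq:DQ_local_error} into three pieces, each one isolating the replacement of a single nested integral — the time integral over $[0,t_n]$, the frequency integral over $[\lambda_0,\lambda_*]$, and the spatial integral $\int_0^{x_j}c(\xi,\cdot)\,d\xi$ appearing inside $\iota$ — by its Gregory quadrature approximation with weights satisfying \eqref{eq:weights_bounds}. Each piece is then estimated by the classical $O(h^{n_0+1})$ error bound for the $(n_0-1)$-th Gregory rule (see \cite[Section 2.6]{Brunner} and \cite{MPV_mixing}) applied to a function that is $C^{n_0+1}$ in the relevant variable, the propagation through the surrounding operations being absorbed into the smoothness of $\rho$ and $\iota$.

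First I would secure the regularity of the exact solution in both variables. Starting from the Volterra formulation \eqref{eq:VFIE_model} and the hypotheses $f,c^0\in C^{n_0+1}([0,L])$, $I\in C^{n_0+1}([\lambda_0,\lambda_*])$, $\rho\in C^{n_0+1}(\mathbb{R}^+_0)$, a bootstrap argument — the right-hand side depending on $c$ only through the map $x\mapsto\int_0^x c(\xi,\tau)\,d\xi$ and the $C^{n_0+1}$ composition $\rho\circ\iota$ — yields $c(x_j,\cdot)\in C^{n_0+1}([0,T])$ and $c(\cdot,t)\in C^{n_0+1}([0,L])$, with derivative bounds that are uniform in $(x,t)$ thanks to \eqref{eq:Properties}.

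Next, by discretizing the three integrals one at a time, I would write $\delta_j^n=\delta_j^{n,(t)}+\delta_j^{n,(\lambda)}+\delta_j^{n,(x)}$. Here $\delta_j^{n,(t)}$ is precisely the error of the time Gregory rule applied to the map $\tau\mapsto\int_{\lambda_0}^{\lambda_*}\rho(\iota(\lambda,C_0(x_j),\int_0^{x_j}c(\xi,\tau)\,d\xi))\,d\lambda$, which lies in $C^{n_0+1}([0,t_n])$, so $|\delta_j^{n,(t)}|\le\kappa_1\Delta t^{n_0+1}$. The term $\delta_j^{n,(\lambda)}$ collects, at each time node, the error of the frequency Gregory rule applied to $\lambda\mapsto\rho(\iota(\lambda,C_0(x_j),\int_0^{x_j}c(\xi,t_p)\,d\xi))\in C^{n_0+1}([\lambda_0,\lambda_*])$, summed against the weights $\Delta t\,w_{nm}$ and $\Delta t\,\omega_{n-p}$, whose total mass is bounded by $(T+\Delta t)\mathsf{W}$ by \eqref{eq:weights_bounds}; hence $|\delta_j^{n,(\lambda)}|\le\kappa_2\Delta\lambda^{n_0+1}$. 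Finally $\delta_j^{n,(x)}$ accounts for replacing, inside every evaluation of $\iota$, the integral $\int_0^{x_j}c(\xi,\cdot)\,d\xi$ by the Gregory sum $\Delta x(\sum_k w_{jk}c(x_k,\cdot)+\sum_h\omega_{j-h}c(x_h,\cdot))$, whose error is $O(\Delta x^{n_0+1})$ since $c(\cdot,t)\in C^{n_0+1}$; propagating this perturbation through $\iota$ and $\rho$ (both Lipschitz with constants independent of the stepsizes, by the boundedness of $\iota$ and the derivative bounds of the previous step), and summing against the frequency and time weights — of combined mass at most $(T+\Delta t)(\lambda_*-\lambda_0+\Delta\lambda)\mathsf{W}^2$ — yields $|\delta_j^{n,(x)}|\le\kappa_3\Delta x^{n_0+1}$. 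Combining the three estimates gives $\max_{j,n}|\delta_j^n|\le\kappa(\Delta t^{n_0+1}+\Delta x^{n_0+1}+\Delta\lambda^{n_0+1})\le\kappa(\Delta t+\Delta x+\Delta\lambda)^{n_0+1}$, i.e.\ consistency of order $n_0+1$.

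I expect the main obstacle to be the bookkeeping hidden in $\delta_j^{n,(x)}$: one must verify that the innermost spatial quadrature error, after being pushed through the composition $\rho\circ\iota$ and then through the two outer Gregory sums in $\lambda$ and in $t$, does not accumulate beyond $O(\Delta x^{n_0+1})$. This relies on using \eqref{eq:weights_bounds} to control $\Delta t\sum_p|\omega_{n-p}|$ and $\Delta\lambda\sum_l|\omega_{N_\lambda-l}|$ (together with the analogous starting-weight sums) by constants independent of the discretization, and on the uniform-in-$(x,t,\lambda)$ derivative bounds for the solution and the data, which make the Lipschitz constant of $\rho\circ\iota$ stepsize-independent.
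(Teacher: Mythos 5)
Your proposal is correct and follows essentially the same route as the paper: the paper's proof performs exactly your telescoping decomposition, bounding a time-quadrature error $\varphi(\Delta t;x_j,t_n)$, frequency-quadrature errors $\varrho(\Delta\lambda;x_j,t_\nu)$ summed against the time weights, and spatial-quadrature errors $\chi(\Delta x;x_j,t_\nu)$ propagated through $\rho\circ\iota$ via a stepsize-independent Lipschitz constant and summed against both weight families, each term being $\mathcal{O}(h^{n_0+1})$ by the Gregory-rule error estimate. Your explicit attention to the regularity of the exact solution and to the weight-mass bounds from \eqref{eq:weights_bounds} fills in details the paper leaves implicit, but the argument is the same.
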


\begin{proof}
    Let $L=N_x \Delta x,$ $T=N_t \Delta t$ and $\lambda_{*} =\lambda_{0}+ N_\lambda \Delta \lambda,$ with $N_x,$ $N_t$ and $N_\lambda$ positive integers. Because of to the regularity assumptions and by applying the mean value theorem, the local discretization error in \eqref{eq:DQ_local_error} satisfies 
    \begin{equation}\label{eq:partial_bound_DQ_cons}
        \begin{split}
             |\delta_j^n|&\leq |\varphi (\Delta t;  \, x_j, t_n)|+\Delta t\left( \sum_{m=0}^{n_0-1} w_{n m} |\varrho(\Delta \lambda;  \, x_j, t_{m})| + \sum_{p=n_0}^{n} \omega_{n-p} |\varrho(\Delta \lambda;  \, x_j, t_{p})| \right) \\
             &+\kappa_1 \Delta t \Delta \lambda 
             \left(\sum_{m=0}^{n_0-1} w_{n m}|\chi(\Delta x;  \, x_j, t_{m})|+\sum_{p=n_0}^{n}  \omega_{n-p}|\chi(\Delta x;  \, x_j, t_{p})|\right)
             \left( \sum_{i=0}^{n_0-1} w_{N_\lambda i}  + \sum_{l=n_0}^{N_\lambda} \omega_{N_\lambda-l} \right),
        \end{split}
    \end{equation}
    for $n_0\leq t\leq N_t,$ $n_0\leq j\leq N_x,$ where $\kappa_1\in \mathbb{R}^+$ depends on the bounds on the known functions and their derivatives but not on the stepsizes and 
    \begin{equation*}
        \begin{split}
            \varphi (\Delta t;  \, x_j, t_n)&= \scalebox{1.3}{$\displaystyle\int$}_{\!\!\!\!\! 0}^{t_n} \!\!\! \scalebox{1.3}{$\displaystyle\int$}_{\!\!\!\!\! \lambda_{0}}^{\lambda_{*}} \!\!\! \rho \! \left( \! \iota \! \left(\lambda,\, C_0(x_j), \int_0^{x_j} \!\! c(\xi,\tau)  \ d\xi\right)\right) \ d\lambda \ d \tau \\
            &- \Delta t\sum_{m=0}^{n_0-1} w_{n m} \scalebox{1.3}{$\displaystyle\int$}_{\!\!\!\!\! \lambda_0}^{\lambda_*} \!\!\! \rho \! \left( \! \iota \! \left(\lambda, C_0(x_j) , \int_0^{x_j} \!\! c(\xi,t_m)  \ d\xi \right) \right) \ d\lambda \\
            &- \Delta t \sum_{p=n_0}^{n} \omega_{n-p} \scalebox{1.3}{$\displaystyle\int$}_{\!\!\!\!\! \lambda_0}^{\lambda_*} \!\!\! \rho \! \left( \! \iota \! \left(\lambda, C_0(x_j) , \int_0^{x_j} \!\!   c(\xi,t_p)  \ d\xi \right) \right) \ d\lambda, 
            \qquad \qquad \begin{array}{l}
            n=n_0,\dots, N_t,  \\
            j \, =n_0,\dots, N_x, 
        \end{array}
        \end{split}
    \end{equation*}
    \begin{equation*}
        \begin{split}
            \varrho(\Delta \lambda;  \, x_j, t_{\nu})&=\scalebox{1.3}{$\displaystyle\int$}_{\!\!\!\!\! \lambda_0}^{\lambda_*} \!\!\! \rho \! \left( \! \iota \! \left(\lambda, C_0(x_j) , \int_0^{x_j} \!\!   c(\xi,t_\nu)  \ d\xi \right) \right) \ d\lambda \\
            & - \Delta \lambda  \sum_{i=0}^{n_0-1} w_{N_\lambda i} \ \rho \! \left( \! \iota \! \left(\lambda_i, C_0(x_j) , \int_0^{x_j} \!\! c(\xi,t_\nu)  \ d\xi \right) \right) \\
            & - \Delta \lambda \sum_{l=n_0}^{N_\lambda} \omega_{N_\lambda-l} \ \rho \! \left( \! \iota \! \left(\lambda_l, C_0(x_j) , \int_0^{x_j} \!\! c(\xi,t_\nu)  \ d\xi \right) \right), 
            \qquad \qquad \qquad \quad \nu=n_0,\dots,n,
        \end{split}
    \end{equation*}
    \begin{equation*}
        \chi(\Delta x;  \, x_j, t_{\nu})=\int_0^{x_j} \!\!   c(\xi,t_\nu)  \ d\xi-\Delta x \left(\sum_{k=0}^{n_0-1}w_{jk} \, c(x_k,t_\nu)+ \sum_{h=n_0}^{j}\omega_{j-h} \, c(x_h,t_\nu)\right), \phantom{\qquad  \nu=n_0,\dots,n,}
    \end{equation*}
    are the errors related to separate time, frequency and space integral approximations, respectively. The properties of the employed $(n_0-1)$-th Gregory rule implies that
    \begin{equation*}
        |\varphi (\Delta t;  \, x_j, t_n)|\leq \kappa_2 \Delta t^{n_0+1}, \qquad |\varrho (\Delta \lambda;  \, x_j, t_n)|\leq \kappa_3 \Delta \lambda^{n_0+1}, \qquad |\chi (\Delta x;  \, x_j, t_n)|\leq \kappa_4 \Delta x^{n_0+1},
    \end{equation*}
    with $\kappa_2, \kappa_3$ and $\kappa_4$ positive constants and therefore, from \eqref{eq:partial_bound_DQ_cons},
    \begin{equation*}
        \begin{split}
             \max_{\substack{n=0,\dots, N_t \\ j=0,\dots, N_x}}|\delta(\Delta x, \Delta t, \Delta \lambda; \,  x_j,t_n)|&\leq \kappa \left(\Delta t^{n_0+1}+\Delta x^{n_0+1}+\Delta \lambda^{n_0+1}\right)\leq \kappa \left(\Delta t+\Delta x+\Delta \lambda\right)^{n_0+1},
        \end{split}
    \end{equation*}
    that yields the outcome.
\end{proof}

A quite standard analysis leads to the following convergence result on bounded intervals.
\begin{theorem}\label{thm:DQ_Convergence}
    Assume that the given functions in \eqref{eq:VFIE_model}, describing problem \eqref{eq:continuous_model}, satisfy $f,c^0\in C^{n_0+1}([0,L]),$ $I\in C^{n_0+1}([\lambda_{0},\lambda_{*}]),$ $\rho\in C^{n_0+1}(\mathbb{R}^+_0).$ Let $\{c^n_j\}$ be the approximations of the continuous solution to \eqref{eq:continuous_model} computed by the DQ scheme \eqref{eq:DQ_method} with $\Delta x=L/N_x,$ $\Delta t=T/N_t,$ $\Delta \lambda=(\lambda_{*}-\lambda_{0})/N_\lambda$ and $N_x,N_t,N_\lambda\in \mathbb{N}.$ 
    If the starting errors satisfy
    \begin{equation}\label{eq:starting_errors}
        |\eta_k^m|=|c(x_k,t_m)-c^m_k|=\mathcal{O}(\Delta x+\Delta t+\Delta \lambda)^{n_0+1}, \quad k=0,\dots,n_0-1, \quad m=0,\dots,n_0-1,
    \end{equation}
    then the DQ method \eqref{eq:DQ_method} is convergent of order $n_0+1.$
\end{theorem}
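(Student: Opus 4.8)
The plan is to run a discrete Gronwall argument on the global error $\mathsf{e}_j^n=c(x_j,t_n)-c_j^n$, in the spirit of the proof of Theorem~\ref{thm:NSFD_conv} but adapted to the logarithmic integral reformulation \eqref{eq:VFIE_model} and to the time--space Volterra structure created by the embedded Gregory rules. First I would evaluate \eqref{eq:VFIE_model} at the node $(x_j,t_n)$ and, invoking Lemma~\ref{lemma:DQ_Cons}, rewrite the exact double integral as the Gregory sum built on the exact nodal values $c(x_k,t_p)$ plus the local discretization error $\delta_j^n$ of \eqref{eq:DQ_local_error}. Subtracting the scheme \eqref{eq:DQ_method}, and using that $c^0$ is reproduced exactly so that the terms $\log c_j^0$ cancel, the mean value theorem applied to $\log$ gives
\[
\frac{\mathsf{e}_j^n}{\xi_j^n}=\delta_j^n+\Bigl[\text{Gregory sum at }c(x_k,t_p)\Bigr]-\Bigl[\text{Gregory sum at }c_k^p\Bigr],
\]
with $\xi_j^n$ between $c(x_j,t_n)$ and $c_j^n$; by \eqref{eq:Properties} and Theorem~\ref{thm:DQ_Properties} we have $0<\xi_j^n\le\max_{0\le i\le N_x}c_i^0$, so this factor is harmless.

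Next I would bound the bracketed difference. By Assumptions~(\hyperref[ass:1]{A}), \eqref{eq:Properties} and Theorem~\ref{thm:DQ_Properties}, all the arguments of $\iota$ that occur in \eqref{eq:DQ_method}---i.e.\ the spatial Gregory sums $\Delta x\sum_{k=0}^{n_0-1}w_{jk}c_k^p+\Delta x\sum_{h=n_0}^{j}\omega_{j-h}c_h^p$ evaluated at both exact and numerical data---stay in a fixed bounded interval, uniformly in the stepsizes, hence $y\mapsto\rho(\iota(\lambda,C_0(x_j),y))$ is Lipschitz uniformly in $\lambda$, $j$ and the stepsizes. Coupling this Lipschitz estimate with the uniform weight bounds \eqref{eq:weights_bounds}, the boundedness of $f$, and the collapses $\Delta x\sum_{k=0}^{j}|\mathsf{e}_k^p|\le\mathsf{W}(L+\Delta x)\,E^p$ where $E^p:=\max_{0\le k\le N_x}|\mathsf{e}_k^p|$, $\Delta\lambda\bigl(\sum_{i=0}^{n_0-1}w_{N_\lambda i}+\sum_{l=n_0}^{N_\lambda}\omega_{N_\lambda-l}\bigr)\le\mathsf{W}(\lambda_{*}-\lambda_{0}+\Delta\lambda)$ and $\Delta t\bigl(\sum_{m=0}^{n_0-1}w_{nm}+\sum_{p=n_0}^{n}\omega_{n-p}\bigr)\le\mathsf{W}(T+\Delta t)$, I would arrive at
\[
|\mathsf{e}_j^n|\le\kappa_1\Bigl(|\delta_j^n|+\Delta t\sum_{m=0}^{n_0-1}w_{nm}E^m+\Delta t\sum_{p=n_0}^{n}\omega_{n-p}E^p\Bigr),
\]
with $\kappa_1$ independent of $\Delta x,\Delta t,\Delta\lambda$. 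Taking the maximum over $j$, the $p=n$ term on the right is absorbed into the left-hand side as soon as $\Delta t$ is small enough that $\kappa_1\tilde{\omega}\,\Delta t<\tfrac{1}{2}$; the starting contribution $\Delta t\sum_{m=0}^{n_0-1}w_{nm}E^m$ is $\mathcal{O}\bigl(\Delta t\,(\Delta x+\Delta t+\Delta\lambda)^{n_0+1}\bigr)$ by \eqref{eq:starting_errors}, while $\max_j|\delta_j^n|=\mathcal{O}\bigl((\Delta x+\Delta t+\Delta\lambda)^{n_0+1}\bigr)$ by Lemma~\ref{lemma:DQ_Cons}. Hence
\[
E^n\le\kappa_2\,(\Delta x+\Delta t+\Delta\lambda)^{n_0+1}+\kappa_3\,\Delta t\sum_{p=n_0}^{n-1}E^p,
\]
and a discrete Gronwall inequality, e.g.\ \cite[Theorem~11.1]{Linz_1985}, gives $E^n\le\kappa_2\,e^{\kappa_3 T}(\Delta x+\Delta t+\Delta\lambda)^{n_0+1}$ for all $n\le N_t$, which is the claimed order of convergence.

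The laborious---though, as the preceding remark anticipates, essentially routine---part is the second step: one must propagate the Lipschitz bounds through the two nested layers of Gregory quadrature (the inner $\lambda$-sum inside $\rho\circ\iota$, the outer $\tau$-sum, and the spatial quadrature hidden inside the argument of $\iota$) and check that every weighted sum reduces to a stepsize-independent factor via the collapses above, all the while tracking the implicit dependence of the $p=n$ block on $\mathsf{e}_j^n$ itself. Once this accounting is complete and that implicit term is absorbed---which is also what forces the smallness requirement on $\Delta t$---the statement follows from a textbook Gronwall estimate.
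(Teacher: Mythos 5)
Your proposal is correct and follows essentially the same route as the paper's proof: both pass through the logarithmic reformulation \eqref{eq:VFIE_model}, control $|\mathsf{e}_j^n|$ via the mean value theorem for $\log$ together with the bounds of Theorem~\ref{thm:DQ_Properties}, propagate Lipschitz estimates for $\rho\circ\iota$ through the nested Gregory sums using \eqref{eq:weights_bounds}, absorb the implicit $p=n$ block for $\Delta t$ small, and close with Lemma~\ref{lemma:DQ_Cons}, the starting-error hypothesis \eqref{eq:starting_errors} and a discrete Gronwall inequality. The only differences are cosmetic (e.g.\ which Gronwall lemma from Linz is cited).
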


\begin{proof}
    The bounds on the continuous and the numerical solutions (cf. Theorem \ref{thm:DQ_Properties}) imply that 
    \begin{equation*}
        \left|\mathsf{e}_j^n\right|\leq \left| \log\left(\dfrac{c(x_j,t_n)}{c^0(x_j)}\right)-\log\left(\dfrac{c_j^n}{c^0(x_j)}\right) \right|, \qquad j=n_0,\dots, N_x, \qquad \ n=n_0,\dots, N_t,
    \end{equation*}
    where $\mathsf{e}_j^n=\mathsf{e}(\Delta x, \Delta t, \Delta \lambda; \,  x_j,t_n)=c(x_j,t_n)-c_j^n,$ is the global approximation error of the DQ method \eqref{eq:DQ_method}. Therefore, evaluating \eqref{eq:VFIE_model} at the point $(x_j,t_n)$ and subtracting \eqref{eq:DQ_method} from it, leads to 
    \begin{equation*}
            \left|\mathsf{e}_j^n\right|\leq \kappa_1  \left|\delta_j^n\right| + \kappa_2  \mathsf{W} \lambda_* \, \Delta x \, \Delta t \,  \left(\sum_{m=0}^{n_0-1}\sum_{p=n_0}^{n}  \left(\sum_{k=0}^{n_0-1}  w_{jk}(|\eta_k^m|+|\mathsf{e}_k^p|)+  \sum_{h=n_0}^{j}  \omega_{j-h}(|\eta_h^m|+|\mathsf{e}_h^p|)\right)  \right),
    \end{equation*}
    for $j=n_0,\dots, N_x$ and $n=n_0,\dots, N_t$, where $\delta_j^n$ denotes the DQ local error in \eqref{eq:DQ_local_error}, $\mathsf{W}=\max\{\tilde{w}, \tilde{\omega}\}$ is related to the weights in \eqref{eq:weights_bounds} and $\kappa_i>0,$ $i\in\{1,2\},$ is a constant depending on the bounds on the known functions and their derivatives. Hence, for a sufficiently small temporal stepsize, the inequality
    \begin{equation*}
        \begin{split}
            \max_{j=0,\dots,N_x}|\mathsf{e}_j^n|\leq \dfrac{\kappa_1 \max_{j=0,\dots,N_x}|\delta_j ^n|}{1-\Delta t \, \kappa_2 \mathsf{W}^2 L \lambda_*}+ \dfrac{\Delta t \, \kappa_2 \mathsf{W}^2 L \lambda_*}{1-\Delta t \, \kappa_2 \mathsf{W}^2 L \lambda_*} \sum_{m=0}^{n-1} \max_{j=0,\dots,N_x}|\mathsf{e}_j^m|, \qquad \quad n=n_0,\dots, N_t.
        \end{split}
    \end{equation*}
    holds true. The Gronwall-type discrete inequality in \cite[Theorem 7.1]{Linz_1985} then yields
    \begin{equation*}
        \begin{split}
            \max_{\substack{n=0,\dots, N_t \\ j=0,\dots, N_x}}|\mathsf{e}_j^n|\leq \left(\dfrac{\kappa_1 + \Delta t \, \kappa_2 \mathsf{W}^2 L \lambda_*}{1-\Delta t \, \kappa_2 \mathsf{W}^2 L \lambda_*}  \right)\exp\left\{\dfrac{\kappa_2 \mathsf{W}^2 LT\lambda_*}{1-\Delta t \, \kappa_2 \mathsf{W}^2 L \lambda_*}\right\}\left(\max_{\substack{n=0,\dots, N_t \\ j=0,\dots, N_x}}|\delta_j ^n|+\max_{\substack{n=0,\dots, N_t \\ j=0,\dots, N_x}}|\eta_j ^n|\right),
        \end{split}
    \end{equation*}
    so that, from the consistency findings of Lemma \ref{lemma:DQ_Cons} and the hypothesis \eqref{eq:starting_errors} on the starting errors, we get the result.
\end{proof}

\section{A predictor-corrector approach}\label{sec:PC}

The NSFD scheme \eqref{eq:NSFD_scheme} and the DQ method \eqref{eq:DQ_method}, defined in Sections \ref{sec:NSFD} and \ref{sec:DQ} respectively, represent two strategies for the dynamically consistent numerical simulation of the integro-differential model \eqref{eq:continuous_model}. Both ensure positivity regardless of the discretization stepsizes. The linearly implicit NSFD scheme additionally preserves monotonicity, but is only first-order accurate, whereas the higher-order DQ method requires greater computational effort. Here, to retain the advantages of the aforementioned approaches, we combine them within a Predictor-Corrector (PC) discretization. We consider, for $\Delta x, \Delta t, \Delta \lambda\in \mathbb{R}^+,$ the uniform meshes $\{x_j=j \Delta x, \ j=0,1,\dots \},$ $\{t_n=n \Delta t, \ n=0,1,\dots  \},$ and $\{\lambda_l=\lambda_{0}+l \Delta \lambda, \ l=0,1,\dots, N_\lambda \}.$ Given the the positive initial values $c_j^0=c^0(x_j)=c(x_j,0),$ we  define the following PC integrator
\begin{equation}\label{eq:PC}
    \begin{cases}
         p_j^{n}=c_j^{n-1}\left\{\displaystyle 1+ \varphi(\Delta t) \, \Delta \lambda \, f(x_j) \sum_{l=0}^{N_\lambda-1}\rho\left(\iota\left(\lambda_l,\, C_0(x_j) ,\, \Delta x \sum_{r=0}^{j-1}c_r^{n-1} \right)\right)\right\}^{-1}, \\
         \displaystyle\beta_{j}^{l}=\rho\!\left( \! \iota \! \left(\lambda_l,\, C_0(x_j) ,\, \dfrac{\Delta x}{2} \!\left(c_0^{n-1}+2\sum_{k=1}^{j-1}c_k^{n-1}+c_j^{n-1} \right) \right)\!\right),                                   \\
         \displaystyle\gamma_{j}^{l}=\rho\!\left( \! \iota \! \left(\lambda_l,\, C_0(x_j) ,\, \dfrac{\Delta x}{2} \!\left(p_0^{n}+2\sum_{k=1}^{j-1}p_k^{n}+p_j^{n} \right) \right)\!\right), \qquad \qquad \qquad \qquad \qquad \qquad l=0,\dots,N_\lambda, \\
         c^{n}_j   \displaystyle=c^{n-1}_j  \exp\left\{ \! -\dfrac{\Delta t}{2} \dfrac{\Delta \lambda}{2}  f(x_j) \! \left( \!\beta_j^{0}+\gamma_{j}^{0}+ 2\!\!\sum_{l=1}^{N_{\lambda}-1}\!\left(\beta_j^{l}+\gamma_{j}^{l}\right)+ \beta_j^{N_\lambda}+\gamma_{j}^{N_\lambda} \! \right) \!   \right\}, 
    \end{cases}
\end{equation}
where the function $\varphi$ satisfies \eqref{eq:prop_phi} and $c_j^n\approx c(x_j,t_n),$ for $n\geq 1$ and $j\geq 0.$

The PC method \eqref{eq:PC} relies on the NSFD scheme \eqref{eq:NSFD_scheme} to predict the solution, followed by a correction phase based on the DQ scheme \eqref{eq:DQ_method} with $n_0=1$ and trapezoidal weights. Consequently, it inherits the advantages of both discretizations, as proved by the following theorem. 

\begin{theorem}\label{thm:PC_Dynamic_Consistency}
    Consider the discrete equations \eqref{eq:PC} under the assumptions (\hyperref[ass:1]{A}) and \eqref{eq:prop_phi}. Then, independently of $\Delta x, \Delta t, \Delta \lambda\in \mathbb{R}^+$ and for each $j\in \mathbb{N}_0,$ the sequences $\{ p_j^n \}_{n\in \mathbf{N}_0}$ and $\{ c_j^n \}_{n\in \mathbf{N}_0}$ are positive and bounded from above. Furthermore, $\{ c_j^n \}_{n\in \mathbf{N}_0}$ is monotonically decreasing with respect to $n$.
\end{theorem}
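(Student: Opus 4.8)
The plan is to argue by induction on the time index $n$, mirroring the proof of Theorem~\ref{thm:NSFD_Dynamic_Consistency}. Concretely, I would show that for every $j\in\mathbb{N}_0$ one has $0<p_j^n\le c_j^{n-1}$ together with $0<c_j^n\le c_j^{n-1}\le c_j^0$ for all $n\ge 1$. The base case is immediate, since the positivity of the initial state in the assumptions~(\hyperref[ass:1]{A}) gives $0<c_j^0\le c_j^0$. Fix $n\ge 1$ and assume $0<c_j^m\le c_j^0$ for all $j$ and all $0\le m\le n-1$; I would then go through the three stages of \eqref{eq:PC} in the order in which they are actually evaluated, noting that the scheme is genuinely explicit: each $p_j^n$ depends only on the already-known values $c_0^{n-1},\dots,c_j^{n-1}$, and each $c_j^n$ on $c_j^{n-1}$, on $c_0^{n-1},\dots,c_j^{n-1}$ and on $p_0^n,\dots,p_j^n$.

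For the predictor, the denominator in the relation defining $p_j^n$ equals $1+\varphi(\Delta t)\,\Delta\lambda\,f(x_j)\sum_{l=0}^{N_\lambda-1}\rho\big(\iota(\lambda_l,C_0(x_j),\Delta x\sum_{r=0}^{j-1}c_r^{n-1})\big)$. By \eqref{eq:prop_phi} we have $\varphi(\Delta t)>0$, while the assumptions~(\hyperref[ass:1]{A}) and the signs $a_1>0$, $a_2\ge0$ in \eqref{eq: rho} give $f(x_j)\ge0$, $\iota\ge0$ (being $I(\lambda)\ge0$ times an exponential) and hence $\rho(\iota)\ge0$; so the denominator is $\ge1$. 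This yields $0<p_j^n\le c_j^{n-1}\le c_j^0$, exactly as in the reformulation \eqref{eq:NSFD_scheme_rapporto}. Next, the arguments $\tfrac{\Delta x}{2}\big(c_0^{n-1}+2\sum_{k=1}^{j-1}c_k^{n-1}+c_j^{n-1}\big)$ and $\tfrac{\Delta x}{2}\big(p_0^n+2\sum_{k=1}^{j-1}p_k^n+p_j^n\big)$ are finite and non-negative by the induction hypothesis and the predictor bound, so $\iota(\lambda_l,\cdot,\cdot)\ge0$ and therefore $\beta_j^l,\gamma_j^l\ge0$ for every $l=0,\dots,N_\lambda$.

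For the corrector, the exponent $-\tfrac{\Delta t}{2}\tfrac{\Delta\lambda}{2}f(x_j)\big(\beta_j^0+\gamma_j^0+2\sum_{l=1}^{N_\lambda-1}(\beta_j^l+\gamma_j^l)+\beta_j^{N_\lambda}+\gamma_j^{N_\lambda}\big)$ is the product of $-\tfrac{\Delta t\,\Delta\lambda}{4}\le0$, of $f(x_j)\ge0$ and of a sum of non-negative terms, hence it is $\le0$. Consequently $0<\exp\{\cdots\}\le1$ and $0<c_j^n=c_j^{n-1}\exp\{\cdots\}\le c_j^{n-1}\le c_j^0$. This closes the induction and simultaneously gives the positivity and upper boundedness of $\{p_j^n\}_{n}$ and $\{c_j^n\}_{n}$ and the monotone (non-increasing) behaviour of $\{c_j^n\}_{n}$ in $n$; as in the proof of Theorem~\ref{thm:NSFD_Dynamic_Consistency}, the inequality is strict, $c_j^n<c_j^{n-1}$, whenever $f(x_j)>0$ and the frequency sum does not vanish.

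I do not anticipate a substantive obstacle: the heart of the matter is simply that $c_j^n$ is $c_j^{n-1}$ multiplied by a factor in $(0,1]$ while $p_j^n$ is $c_j^{n-1}$ divided by a factor $\ge1$, both a direct consequence of the sign conditions in~(\hyperref[ass:1]{A}), \eqref{eq:prop_phi} and \eqref{eq: rho}. The only point that needs care is the bookkeeping of the evaluation order — checking that when $\gamma_j^l$, and hence $c_j^n$, is formed the quantities $p_0^n,\dots,p_j^n$ have already been produced and are positive, and that every argument passed to $\iota$ and $\rho$ lies in $\mathbb{R}^+_0$, so that the assumed signs can legitimately be used.
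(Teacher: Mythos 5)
Your proposal is correct and follows essentially the same route as the paper, which simply invokes ``standard induction arguments'' for the positivity of $\{p_j^n\}$ and $\{c_j^n\}$ and then observes that the non-negativity of $\beta_j^l$ and $\gamma_j^l$ forces the corrector's exponential factor into $(0,1]$; your write-up merely makes the predictor bound $0<p_j^n\le c_j^{n-1}$ and the evaluation-order bookkeeping explicit. Your remark that strict monotonicity requires $f(x_j)>0$ and a non-vanishing frequency sum is in fact slightly more careful than the paper's unqualified claim $c_j^n<c_j^{n-1}$.
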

\begin{proof}
     Standard induction arguments yield the positivity of $\{ p_j^n \}_{n\in \mathbf{N}_0}$ and $\{ c_j^n \}_{n\in \mathbf{N}_0},$ independently of $j.$ Furthermore, due to the assumptions (\hyperref[ass:1]{A}), the terms $\beta_{j}^{l}$ and $\gamma_{j}^{l}$ are unconditionally non-negative. Therefore, from the last equation in \eqref{eq:PC}, $c_j^n<c_j^{n-1}<c_j^0$ for all $n\geq 1,$ which completes the proof.
\end{proof}

\begin{theorem}\label{thm:PC_conv}
    Assume that the known functions in \eqref{eq:continuous_model}  satisfy $f,c^0\in C^{2}([0,L]),$ $I\in C^{2}([\lambda_{0},\lambda_{*}]),$ $\rho\in C^{2}(\mathbb{R}^+_0).$ Let $\{c^n_j\}$ be the approximations of the continuous solution to \eqref{eq:continuous_model} computed by the PC scheme \eqref{eq:PC} with $\Delta x=L/N_x,$ $\Delta t=T/N_t,$ $\Delta \lambda=(\lambda_{*}-\lambda_{0})/N_\lambda$ and $N_x,N_t,N_\lambda\in \mathbb{N}.$ Denote by $\mathsf{e}(\Delta x, \Delta t, \Delta \lambda; \,  x_j,t_n)=c(x_j,t_n)-c_j^n$ the PC approximation error. Then, there exists a constant $\kappa\in\mathbb{R}^+$ such that 
    \begin{equation*}
        \max_{\substack{n=0,\dots, N_t \\ j=0,\dots, N_x}}|\mathsf{e}(\Delta x, \Delta t, \Delta \lambda; \,  x_j,t_n)|\leq  \kappa (\Delta x+\Delta t+\Delta \lambda)^2.
    \end{equation*}
    Furthermore, the predictor-corrector method \eqref{eq:PC} is convergent of order $2.$
\end{theorem}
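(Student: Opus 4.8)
The plan is to treat the corrector step of \eqref{eq:PC} as an explicit one-step discretization of the incremental version of \eqref{eq:VFIE_model},
\[
\log\frac{c(x_j,t_n)}{c(x_j,t_{n-1})}=-f(x_j)\int_{t_{n-1}}^{t_n}\!\!\int_{\lambda_0}^{\lambda_*}\rho\!\left(\iota\!\left(\lambda,C_0(x_j),\int_0^{x_j}c(\xi,\tau)\,d\xi\right)\right)d\lambda\,d\tau,
\]
equivalently $c(x_j,t_n)=c(x_j,t_{n-1})\exp\{-R_j^n\}$ with $R_j^n\ge 0$, where the time integral over $[t_{n-1},t_n]$ is approximated by the single-panel trapezoidal rule, the frequency and inner-space integrals by the composite trapezoidal rule (the Gregory rule with $n_0=1$), and the value $c(\cdot,t_n)$ needed at the right endpoint is replaced by the NSFD predictor $p^n$. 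The argument then follows the pattern of the proofs of Theorems \ref{thm:NSFD_conv} and \ref{thm:DQ_Convergence}: set up an error recursion, bound the one-step defect, and close with a discrete Gronwall inequality.

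First I would collect the a priori information. By Theorem \ref{thm:PC_Dynamic_Consistency} the sequences $\{p_j^n\}$ and $\{c_j^n\}$ are positive, bounded above by $\max_x c^0(x)$, and the corrector exponent is non-negative; the continuous solution obeys \eqref{eq:Properties} and, from the data regularity, $c\in C^2$ jointly in $(x,t)$ and $\lambda\mapsto\rho(\iota(\lambda,\cdot))\in C^2$, so that the trapezoidal errors below are well defined and $\rho$, $\iota$, $f$, $c^0$ and the compositions occurring in \eqref{eq:PC} are Lipschitz with stepsize-independent constants on the compact sets involved. Writing $c(x_j,t_n)=c(x_j,t_{n-1})e^{-R_j^n}$ and $c_j^n=c_j^{n-1}e^{-\widetilde{R}_j^n}$ with $R_j^n,\widetilde{R}_j^n\ge0$, and using the elementary inequality $|e^{-a}-e^{-b}|\le|a-b|$ for $a,b\ge0$, one obtains
\[
|\mathsf{e}_j^n|\le|\mathsf{e}_j^{n-1}|+\bigl(\max_x c^0\bigr)\,\bigl|R_j^n-\widetilde{R}_j^n\bigr|,
\]
so the task reduces to estimating $|R_j^n-\widetilde{R}_j^n|$, i.e. the discrepancy between the exact incremental integral and the corrector exponent.

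Next I would split $|R_j^n-\widetilde{R}_j^n|\le|\delta_j^n|+(\text{terms Lipschitz-controlled by }\{\mathsf{e}_r^{n-1}\})$, where $\delta_j^n$ is the one-step defect obtained by inserting the exact solution into the corrector. Three sources contribute to $\delta_j^n$: the single-panel trapezoidal rule in time gives $O(\Delta t^3)$; the composite trapezoidal rules in $\lambda$ and $\xi$ give $O(\Delta\lambda^2)$ and $O(\Delta x^2)$, each multiplied by the outer time weight $\Delta t/2$; and the replacement of $c(\cdot,t_n)$ by the predictor contributes a term controlled, through Lipschitz continuity and the composite trapezoidal sum in space, by the one-step NSFD error $\widehat{p}_j^n-c(x_j,t_n)$, which by the order-$1$ consistency of the NSFD scheme (Lemma \ref{lemma:NSFD_Cons}) and \eqref{eq:prop_phi} is $O(\Delta t(\Delta t+\Delta x+\Delta\lambda))$, again carrying the factor $\Delta t/2$. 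Hence $|\delta_j^n|\le\kappa\,\Delta t\,(\Delta t+\Delta x+\Delta\lambda)^2$ uniformly in $j$. For the data-perturbation terms I would use the explicit representation of one NSFD step (cf. \eqref{eq:NSFD_scheme_rapporto}) and its Lipschitz dependence on the input to bound $|p_j^n-\widehat{p}_j^n|\le|\mathsf{e}_j^{n-1}|+\kappa\,\Delta t\,\Delta x\sum_{r=0}^{j}|\mathsf{e}_r^{n-1}|$, and the Lipschitz continuity of $\rho\circ\iota$ composed with the space and frequency quadratures to control the $\beta$- and $\gamma$-contributions by $\kappa\,\Delta t\,\Delta x\sum_{r=0}^{j}|\mathsf{e}_r^{n-1}|$. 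Since the corrector is explicit, only level-$(n-1)$ errors appear, giving, for $n=1,\dots,N_t$ and $j=0,\dots,N_x$,
\[
|\mathsf{e}_j^n|\le|\mathsf{e}_j^{n-1}|+\kappa\,\Delta t\,\Delta x\sum_{r=0}^{j}|\mathsf{e}_r^{n-1}|+\kappa\,\Delta t\,(\Delta t+\Delta x+\Delta\lambda)^2.
\]
Setting $E^n=\max_{0\le j\le N_x}|\mathsf{e}_j^n|$ yields $E^n\le\bigl(1+\kappa\,\Delta t\,(L+\Delta x)\bigr)E^{n-1}+\kappa\,\Delta t\,(\Delta t+\Delta x+\Delta\lambda)^2$; since $E^0=0$ (exact initial data), a discrete Gronwall inequality (as in Theorems \ref{thm:NSFD_conv}--\ref{thm:DQ_Convergence}, e.g. \cite[Theorem 7.1]{Linz_1985}) gives $E^{N_t}\le\kappa\exp\{\kappa T(L+\Delta x)\}(\Delta t+\Delta x+\Delta\lambda)^2$, which is the claimed bound and, after division by $(\Delta t+\Delta x+\Delta\lambda)^2$, the convergence of order $2$.

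The main obstacle is the bound on the corrector defect: one must verify that, although the predictor $p^n$ is only a first-order (NSFD) approximation of $c(\cdot,t_n)$, its contribution to the one-step defect of \eqref{eq:PC} carries an extra factor $\Delta t$ — inherited from the outer trapezoidal weight $\Delta t/2$ of the time integral over $[t_{n-1},t_n]$ — so that the effective local error is $O(\Delta t(\Delta t+\Delta x+\Delta\lambda)^2)$ and, after summing the $N_t=T/\Delta t$ time steps, the global order is $2$. Tracking these $\Delta t$ factors through the three nested non-local quadratures, and through the Lipschitz constants of $\rho\circ\iota$, is the delicate point; the remaining ingredients — the Lipschitz estimates for the predictor and for $\rho\circ\iota$, and the Gronwall argument — proceed exactly as in the earlier proofs.
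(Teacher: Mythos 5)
Your proposal is correct and follows essentially the same strategy as the paper: both rest on the first-order consistency of the NSFD predictor (Lemma \ref{lemma:NSFD_Cons}) and the second-order consistency of the trapezoidal corrector (Lemma \ref{lemma:DQ_Cons} with $n_0=1$). The only difference is that the paper delegates the combination of these two estimates to a cited result on predictor--corrector methods for Volterra equations (\cite[Theorem 2]{PC_Volterra}), whereas you prove that step directly --- correctly identifying that the predictor's $\mathcal{O}(\Delta t(\Delta t+\Delta x+\Delta\lambda))$ one-step error is damped by the outer trapezoidal weight $\Delta t/2$, so the corrector's local defect is $\mathcal{O}(\Delta t(\Delta t+\Delta x+\Delta\lambda)^2)$, after which the discrete Gronwall argument yields global order $2$.
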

\begin{proof}
    The outcomes of Lemmas \ref{lemma:NSFD_Cons} and \ref{lemma:DQ_Cons} assure that for the local discretization errors  $\prescript{P}{}{\delta}_j^n$ and $\prescript{C}{}{\delta}_j^n$ of the predictor and the corrector scheme, respectively, the inequalities 
    \begin{equation*}
        \max_{\substack{n=0,\dots, N_t \\ j=0,\dots, N_x}}|\prescript{P}{}{\delta}_j^n|\leq  \kappa_1 (\Delta x+\Delta t+\Delta \lambda), \qquad \qquad  \max_{\substack{n=0,\dots, N_t \\ j=0,\dots, N_x}}|\prescript{C}{}{\delta}_j^n|\leq  \kappa_2 (\Delta x+\Delta t+\Delta \lambda)^2,
    \end{equation*}
    hold true, with $\kappa_1$ and $\kappa_2$ positive constants. The second order convergence then comes from a straightforward extension of \cite[Theorem 2]{PC_Volterra}.  
\end{proof}

The findings of Theorems \ref{thm:PC_Dynamic_Consistency} and \ref{thm:PC_conv} establish the dynamical consistency and the quadratic convergence of the explicit PC numerical method \eqref{eq:PC}, respectively.
Therefore, when compared to the NSFD method \eqref{eq:NSFD_scheme}, the PC scheme is expected to be more accurate. On the other hand, in contrast to the DQ approach \eqref{eq:DQ_method}, it unconditionally preserves the monotonicity of the solution and is less demanding to implement, given its more straightforward structure. For this reason, in what follows, the DQ method will be employed only for $n_0>1$ (i.e. for convergence orders greater than $2$), while the PC integrator \eqref{eq:PC} will be preferred for a second-order approximation of the solution to \eqref{eq:continuous_model}.
We refer to Section \ref{sec: Numerical_Simulations} for a comprehensive comparative analysis of the NSFD, DQ and PC discretizations.

\section{Numerical Simulations}\label{sec: Numerical_Simulations}
In this section, we present numerical experiments to illustrate the theoretical properties discussed in the preceding sections and to compare the effectiveness of the proposed methods. Additionally, we explore two realistic applications of the model \eqref{eq:continuous_model} pertaining to serotonin photoactivation and cadmium yellow photodegradation. All simulations are conducted using MATLAB R2023a on an Intel(R) Core(TM) i9-14900KF processor operating at 3200 MHz.

\medskip

\textbf{Test 1}\label{defn: Test_1} - For our first test we consider the problem defined in \eqref{eq:continuous_model} with 
\begin{equation}\label{eq:Test_1}
    \begin{split}
        c^0(x)&=\exp\left\{-\dfrac{x^2}{5}\right\}, \quad C^0(x)= \dfrac{\sqrt{5\pi}}{2}\erf{\left\{\dfrac{x}{\sqrt{5}}\right\}}, \quad  f(x)=2-x, \quad  L=1, \quad T=1, \\
        I(\lambda)&=\lambda^2, \quad  \varepsilon_\texttt{A}(\lambda)=5+\lambda, \quad \varepsilon_\texttt{B}(\lambda)=\lambda , \quad  \lambda_0=0, \quad  \lambda^*=1, \quad  a_1=a_2=1, \quad \mu=0.1.  
    \end{split}
\end{equation}
The functions and parameters in \eqref{eq:Test_1} serve as an illustrative example to assess the mathematical properties and evaluate the performances of the proposed schemes. The numerical solution of \hyperref[defn: Test_1]{Test 1} computed using the fourth-order, three-step method \eqref{eq:DQ_method} with II Gregory rule and $\Delta x=\Delta t=\Delta \lambda=2^{-9},$ is shown in Figure \ref{fig:Ar1_Reference}. This solution, denoted by $\bm{\mathcal{C}}=\{\mathcal{C}_j^n\}\in \mathbb{R}^{N_x \times N_t}$ hereafter, is used as a benchmark to compute the mean space-time approximation errors and the experimental order of convergence  defined as follows\begin{equation}\label{eq:Error_mean_formulation}
        E(\Delta z, \Delta t, \Delta \lambda)=\dfrac{\sum_{j=0}^{N_z}\sum_{n=0}^{N_t} \left|c_j^n-\mathcal{C}_j^n\right|}{N_xN_t}, \qquad \hat{p}=\log_2\left(\dfrac{E(\Delta z, \Delta t, \Delta \lambda)}{E(\frac{\Delta z}{2}, \frac{\Delta t}{2}, \frac{\Delta \lambda}{2})}\right).
\end{equation}

\begin{figure}[htbp]
  \centering
  \hspace{-1.6 cm}\begin{minipage}{0.45\textwidth}
    \centering
    \includegraphics[width=1.26\linewidth]{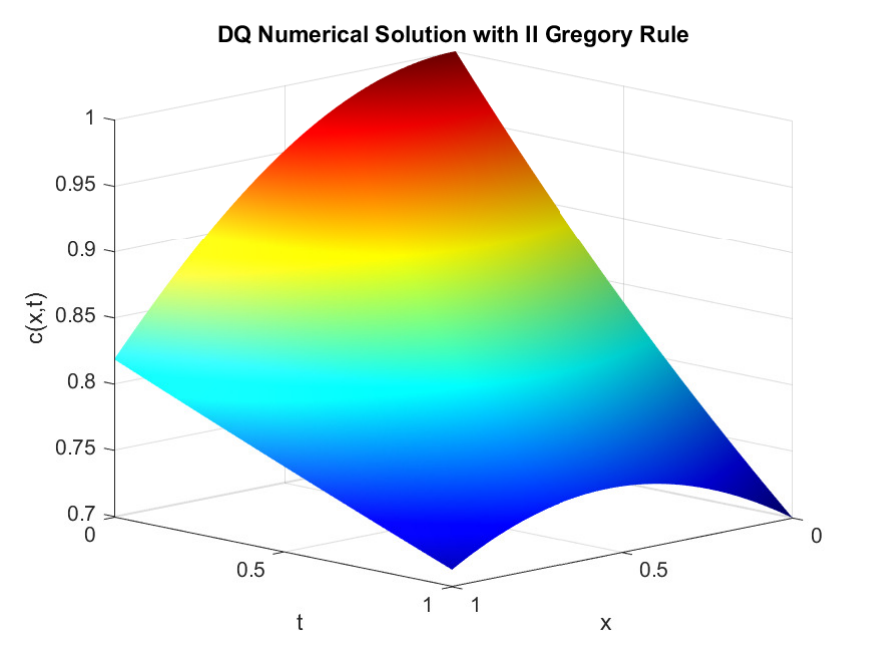}
  \end{minipage}\hspace{0.05\textwidth} 
  \begin{minipage}{0.45\textwidth}
    \centering
    \includegraphics[width=1.26\linewidth]{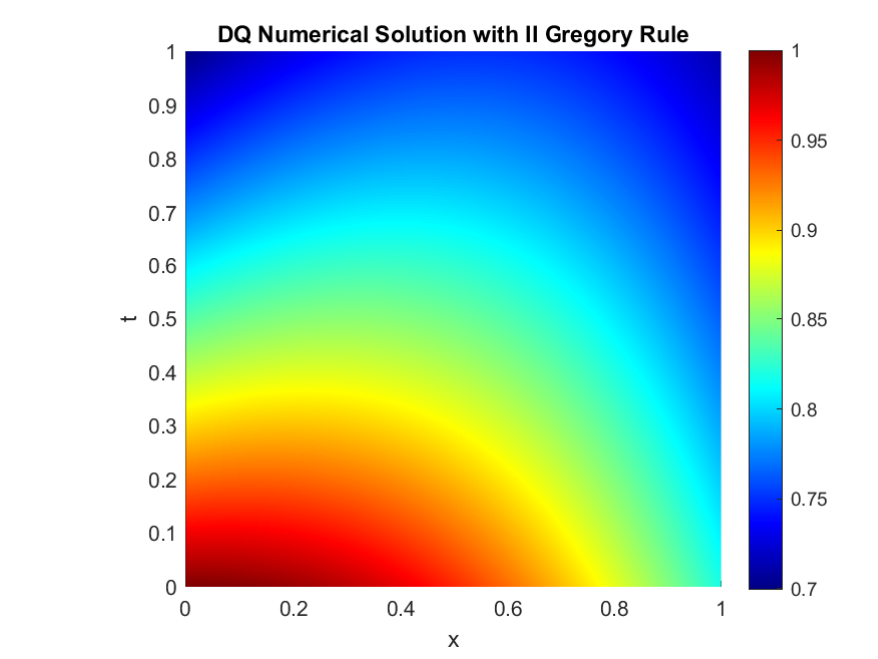}
  \end{minipage}
  \caption{Reference numerical solution of \hyperref[defn: Test_1]{Test 1} obtained with the DQ scheme \eqref{eq:DQ_method}, $n_0=3$ and $\Delta x=\Delta t=\Delta \lambda=2^{-9}.$}
  \label{fig:Ar1_Reference}
\end{figure}

As a first step, we analyze the simulation outcomes of the NSFD numerical method \eqref{eq:NSFD_scheme} endowed with the following denominator functions
\begin{equation}\label{eq:Denominator_Functions}
    \phi_1(\Delta t)=\Delta t,  \qquad \qquad \phi_2(\Delta t)=\Delta t(1+\gamma \Delta t),
\end{equation}
where $\gamma=\max_{[0,L]}f(x)=2.$ In both cases, the conditions \eqref{eq:prop_phi} are satisfied and the NSFD scheme \eqref{eq:NSFD_scheme} ensures positivity and monotonicity of the solutions. Moreover, it demonstrates experimental linear convergence, in compliance with the theoretical findings of Theorems \ref{thm:NSFD_Dynamic_Consistency} and \ref{thm:NSFD_conv}. The approximation errors and the experimental order of convergence, computed as detailed in \eqref{eq:Error_mean_formulation} for different values of the stepsizes, are reported in Table \ref{tab:Test1_Err_I_ORDER} and displayed in Figure \ref{fig:Exp_Order_1}. Notably, the choice of the denominator function \(\phi_2(\Delta t)\) results in improved overall accuracy compared to the standard option \(\phi_1(\Delta t)\). We also tested the function $\phi_3(\Delta t)=\gamma^{-1}(1-e^{-\gamma \Delta t})$ introduced in \cite{Manning2006IntroductionTN,NSFD_INB} but the corresponding results, here omitted for the sake of brevity, were slightly less accurate than those obtained with $\phi_1(\Delta t).$ 

The following dynamically consistent scheme
\begin{equation}\label{eq:RQ}
    c_j^{n+1}=c_j^{n} \exp\left\{-\Delta t \Delta \lambda f(x_j) \sum_{l=0}^{N_\lambda -1} \rho\!\left( \! \iota \! \left(\lambda_l,\, C_0(x_j) ,\, \Delta x \! \sum_{r=0}^{j-1}c_r^{n}\right)\!\right) \right\},
\end{equation}
introduced in \cite{Ceseri_Natalini_Pezzella} and based upon a Rectangular Quadrature (RQ) discretization of \eqref{eq:VFIE_model}, represents a viable option for the simulation of \eqref{eq:continuous_model}. It unconditionally preserves positivity and monotonicity and can be interpreted as an explicit and linearly convergent, simplified variant of the DQ approach \eqref{eq:DQ_method}. The error plot in Figure \ref{fig:Exp_Order_1} and the work precision diagram in Figure \ref{fig:WPD_1} highlight the competitiveness of the RQ scheme \eqref{eq:RQ}, which slightly outperforms the NSFD approach when $\phi_1(\Delta t)$ is used as the denominator function. However, the adoption of $\phi_2(\Delta t)$ in the NSFD method yields superior accuracy and efficiency, surpassing the RQ discretization as well.

\begin{table}[htbp] \center 
\begin{tabular}{ccc|ccc|ccc}
\hline\noalign{\smallskip}
\multicolumn{3}{c|}{NSFD$_{\Delta t}$ } & \multicolumn{3}{|c}{RQ} & \multicolumn{3}{|c}{NSFD$_{\Delta t(1+\gamma\Delta t)}$}  \\
\noalign{\smallskip}\hline\noalign{\smallskip}
$\vartheta$ & $E(\Delta x, \Delta t, \Delta \lambda)$ & $\hat{p}$ & $\vartheta$ & $E(\Delta x, \Delta t, \Delta \lambda)$ & $\hat{p}$ & $\vartheta$ & $E(\Delta x, \Delta t, \Delta \lambda)$ & $\hat{p}$ \\
$2^{-2}$ & $2.63 \cdot 10^{-2}$ & --- & $2^{-2}$ & $2.45 \cdot 10^{-2}$ & --- & $2^{-2}$ & $7.77 \cdot 10^{-3}$ & --- \\
$2^{-3}$ & $1.36 \cdot 10^{-2}$ & $0.96$ & $2^{-3}$ & $1.24 \cdot 10^{-2}$ & $0.99$ & $2^{-3}$ & $6.89 \cdot 10^{-3}$ & $0.17$ \\
$2^{-4}$ & $6.91 \cdot 10^{-3}$ & $0.97$ & $2^{-4}$ & $6.22 \cdot 10^{-3}$ & $0.99$ & $2^{-4}$ & $4.26 \cdot 10^{-3}$ & $0.69$ \\
$2^{-5}$ & $3.49 \cdot 10^{-3}$ & $0.99$ & $2^{-5}$ & $3.12 \cdot 10^{-3}$ & $1.00$ & $2^{-5}$ & $2.35 \cdot 10^{-3}$ & $0.86$ \\
$2^{-6}$ & $1.75 \cdot 10^{-3}$ & $0.99$ & $2^{-6}$ & $1.56 \cdot 10^{-3}$ & $1.00$ & $2^{-6}$ & $1.23 \cdot 10^{-3}$ & $0.93$ \\
$2^{-7}$ & $8.78 \cdot 10^{-4}$ & $1.00$ & $2^{-7}$ & $7.81 \cdot 10^{-4}$ & $1.00$ & $2^{-7}$ & $6.27 \cdot 10^{-4}$ & $0.97$ \\
\noalign{\smallskip} \hline
\end{tabular}
\smallskip
\caption{Approximation errors and experimental convergence rates for the NSFD and RQ numerical solutions of \hyperref[defn: Test_1]{Test 1}, with $\Delta x = \Delta t = \Delta \lambda = \vartheta$.}
\label{tab:Test1_Err_I_ORDER}       
\end{table}

\begin{figure}[htbp]
  \centering
  \begin{minipage}{0.45\textwidth}
    \centering
    \includegraphics[width=0.85\linewidth]{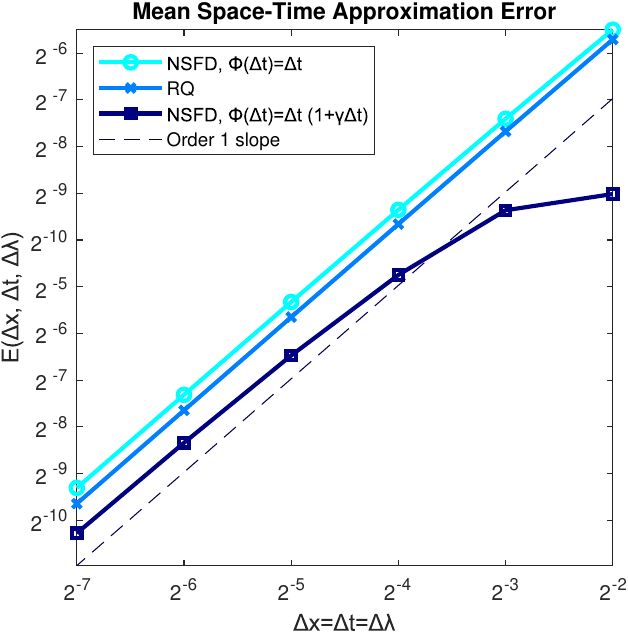}
    \caption{Base-2 logarithmic plot of the error as a function of the stepsizes for the NSFD method \eqref{eq:NSFD_scheme} and the RQ scheme \eqref{eq:RQ}.}
    \label{fig:Exp_Order_1}
  \end{minipage}\hspace{0.05\textwidth} 
  \begin{minipage}{0.45\textwidth}
    \centering
    \includegraphics[width=0.85\linewidth]{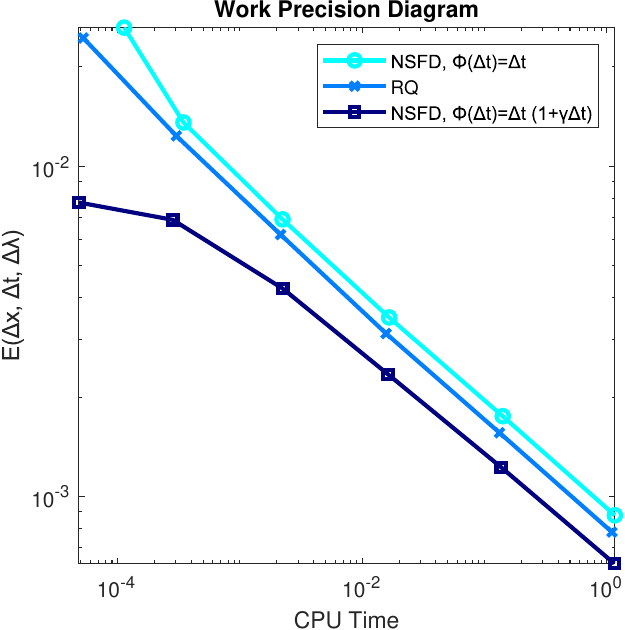}
    \caption{Logarithmic plot of the error as a function of the execution time for the NSFD method \eqref{eq:NSFD_scheme} and the RQ scheme \eqref{eq:RQ}. }
    \label{fig:WPD_1}
  \end{minipage}
\end{figure}

We investigate the PC scheme \eqref{eq:PC} and the DQ method \eqref{eq:DQ_method} as more involved and accurate integrators for \hyperref[defn: Test_1]{Test 1}. Specifically, we consider \eqref{eq:DQ_method} with I and II Gregory weights ($n_0\in\{2,3\}$) and generate the required starting values satisfying \eqref{eq:starting_errors} using the PC approach and the I Gregory-DQ discretization, respectively.  Furthermore, at each time step, the fixed point of \eqref{eq:Forma_G} is computed using the trust-region dogleg algorithm from the \texttt{fsolve} MATLAB routine, with step and function value tolerances set to $10^{-14}$. The results of the experiments, reported in Table \ref{tab:Test1_Err_High_ORDER} and Figure \ref{fig:Exp_Order_High}, validate the theoretical predictions of Theorems \ref{thm:DQ_Convergence} and \ref{thm:PC_conv}, demonstrating the high-order convergence of the proposed discretizations (quadratic for the PC scheme, of order $n_0+1$ for the DQ method). The errors, also in this case, are computed following \eqref{eq:Error_mean_formulation}. From the work precision diagram in Figure \ref{fig:WPD_High} it is clear that, although each iteration of the implicit DQ method is more demanding than that of the explicit PC scheme, the former proves to be overall more efficient in terms of the accuracy-computational cost trade-off.

\begin{figure}[htbp]
  \centering
  \begin{minipage}{0.45\textwidth}
    \centering
    \includegraphics[width=0.85\linewidth]{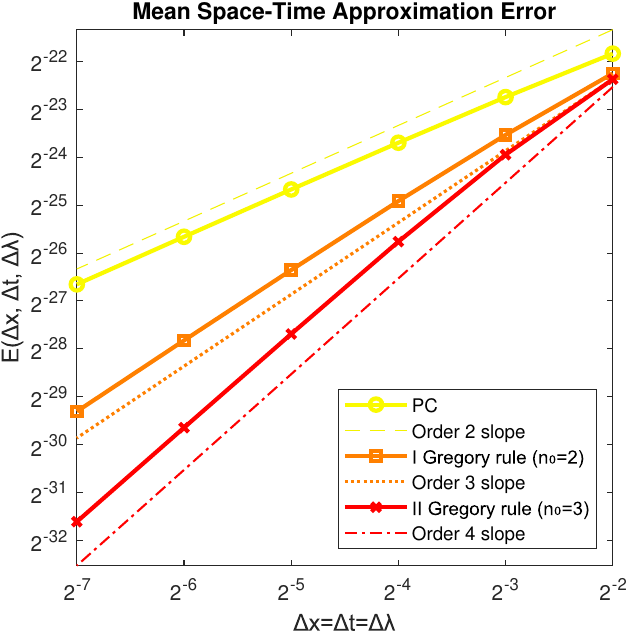}
    \caption{Base-2 logarithmic plot of the error as a function of the stepsizes for the PC method \eqref{eq:PC} and the DQ scheme \eqref{eq:DQ_method}.}
    \label{fig:Exp_Order_High}
  \end{minipage}\hspace{0.05\textwidth} 
  \begin{minipage}{0.45\textwidth}
    \centering
    \includegraphics[width=0.85\linewidth]{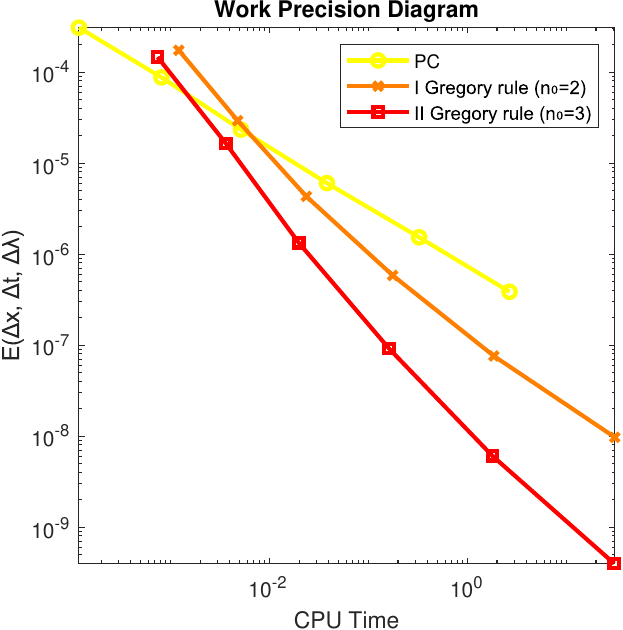}
    \caption{Logarithmic plot of the error as a function of the execution time for the PC method \eqref{eq:PC} and the DQ scheme \eqref{eq:DQ_method}.}
    \label{fig:WPD_High}
  \end{minipage}
\end{figure}

\begin{table}[htbp] \center 
\begin{tabular}{ccc|ccc|ccc}
\hline\noalign{\smallskip}
\multicolumn{3}{c}{PC} & \multicolumn{3}{|c}{DQ-I Gregory} {\small$(n_0=2)$} \phantom{c} & \multicolumn{3}{|c}{DQ-II Gregory} {\small$(n_0=3)$} \phantom{c} \\
\noalign{\smallskip}\hline\noalign{\smallskip}
$\vartheta$ & $E(\Delta x, \Delta t, \Delta \lambda)$ & $\hat{p}$ & $\vartheta$ & $E(\Delta x, \Delta t, \Delta \lambda)$ & $\hat{p}$ & $\vartheta$ & $E(\Delta x, \Delta t, \Delta \lambda)$ & $\hat{p}$ \\
$2^{-2}$ & $3.07 \cdot 10^{-4}$ & --- & $2^{-2}$ & $1.74 \cdot 10^{-4}$ & --- & $2^{-2}$ & $1.46 \cdot 10^{-4 \phantom{0}}$ & --- \\
$2^{-3}$ & $8.74 \cdot 10^{-5}$ & $1.82$ & $2^{-3}$ & $2.92 \cdot 10^{-5}$ & $2.57$ & $2^{-3}$ & $1.65 \cdot 10^{-5 \phantom{0}}$ & $3.15$ \\
$2^{-4}$ & $2.33 \cdot 10^{-5}$ & $1.91$ & $2^{-4}$ & $4.30 \cdot 10^{-6}$ & $2.77$ & $2^{-4}$ & $1.33 \cdot 10^{-6 \phantom{0}}$ & $3.63$ \\
$2^{-5}$ & $6.02 \cdot 10^{-6}$ & $1.95$ & $2^{-5}$ & $5.84 \cdot 10^{-7}$ & $2.88$ & $2^{-5}$ & $9.15 \cdot 10^{-8 \phantom{0}}$ & $3.86$ \\
$2^{-6}$ & $1.53 \cdot 10^{-6}$ & $1.98$ & $2^{-6}$ & $7.60 \cdot 10^{-8}$ & $2.94$ & $2^{-6}$ & $6.07 \cdot 10^{-9 \phantom{0}}$ & $3.91$ \\
$2^{-7}$ & $3.85 \cdot 10^{-7}$ & $2.00$ & $2^{-7}$ & $9.71 \cdot 10^{-9}$ & $2.97$ & $2^{-7}$ & $3.98 \cdot 10^{-10}$ & $3.93$ \\
\noalign{\smallskip} \hline
\end{tabular}
\smallskip
\caption{Approximation errors and experimental convergence rates for the PC and DQ numerical solutions of \hyperref[defn: Test_1]{Test 1}, with $\Delta x = \Delta t = \Delta \lambda = \vartheta$.}
\label{tab:Test1_Err_High_ORDER}       
\end{table}

In order to experimentally highlight the positivity-preserving property of the DQ method \eqref{eq:DQ_method}, as established in Theorem \ref{thm:DQ_Properties}, we present in Figure \ref{fig:Thm_Verification_DQ1} the DQ numerical solution obtained with first Gregory rule $(n_0=2)$ and the corresponding theoretical lower bound given in \eqref{eq:DQ_bound_inferiore}, for various stepsize values. Although not sharp, such bound confirms the scheme's unconditional positivity. The outcomes of the DQ-II Gregory rule $(n_0=3)$ simulations, involving different weights-dependent values of $\bar{c}(\Delta x, \Delta t, \Delta \lambda)$, exhibit similar characteristics but are here excluded for conciseness.

\begin{figure}[htbp]
\hspace{-0.15\linewidth}\includegraphics[width=1.3\linewidth]{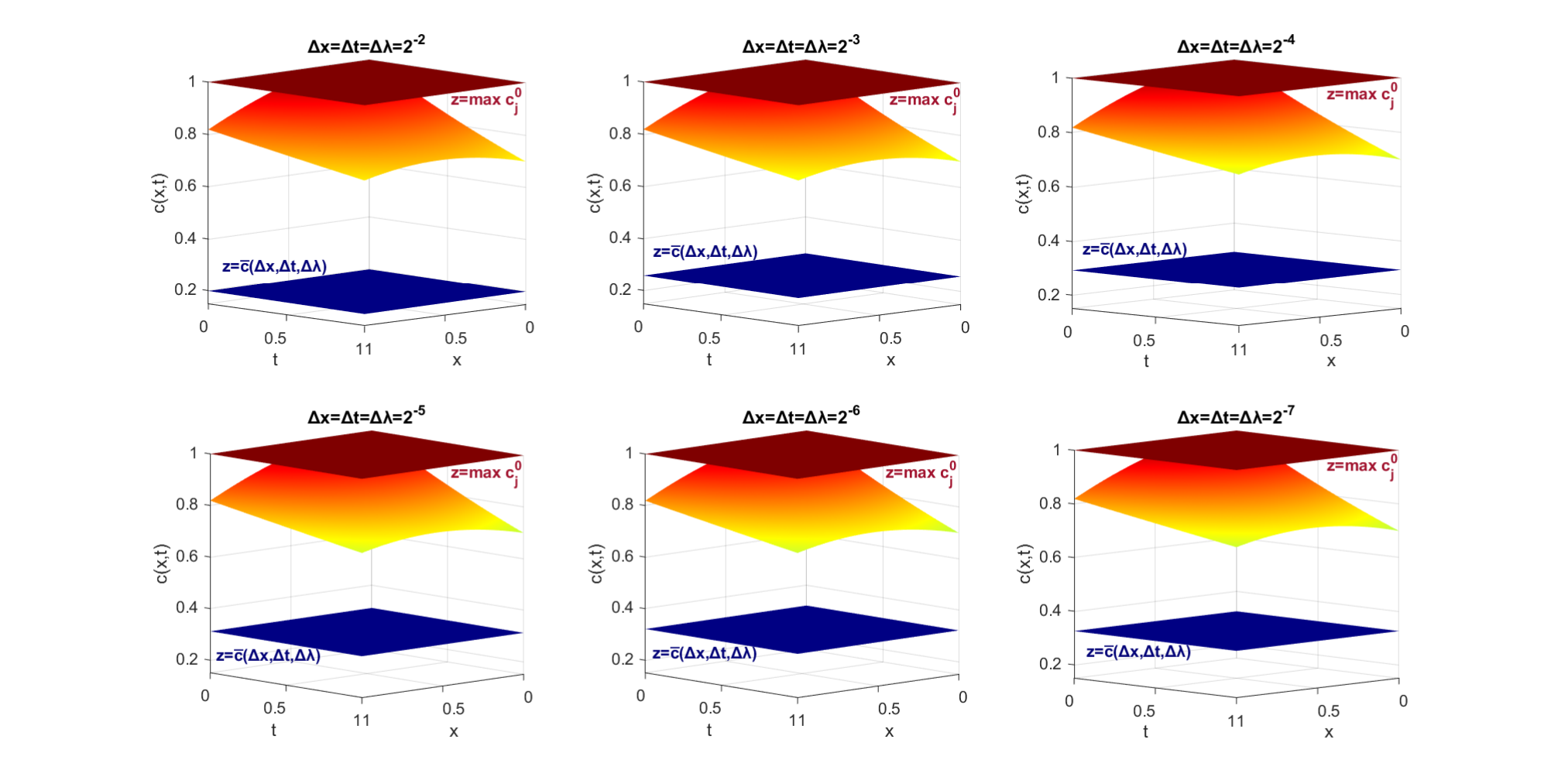}
  \caption{DQ numerical solution of \hyperref[defn: Test_1]{Test 1} with first Gregory rule $(n_0=2)$ and experimental validation of the boundedness property from Theorem \ref{thm:DQ_Properties} for different values of the stepsizes.}
  \label{fig:Thm_Verification_DQ1}
\end{figure}

\medskip

\textbf{Test 2}\label{defn: Test_2} - For our second test, we apply model \eqref{eq:continuous_model} to describe the photoactivation of serotonin ($5$-HT), a biological effector involved in establishing left-right patterning during vertebrate embryonic development \cite{vertebrate1,Lamp+Bio_Sero,Vertebrate2}. The compound BHQ-\!\textit{O}-5HT, derived from $5$-HT through the application of a photoremovable protecting group, undergoes photolysis upon exposure to light, leading to the direct release of the phenol (see Figure \ref{fig:serotonin_structures_combined} for a representation of the molecular structures). This process, extensively exploited in experimental studies, offers high spatial resolution and enables the analysis of processes occurring on timescales shorter than those required for decarboxylation \cite{McLain2015}. 

\begin{figure}[htbp]
 \vspace{-1.8cm}
  \centering
  \begin{minipage}{0.45\textwidth}
    \centering
\includegraphics[width=0.7\linewidth]{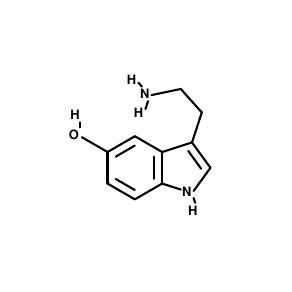}
  \end{minipage}
  \hspace{0.05\textwidth} 
  \begin{minipage}{0.45\textwidth}
    \centering
\includegraphics[width=0.85\linewidth]{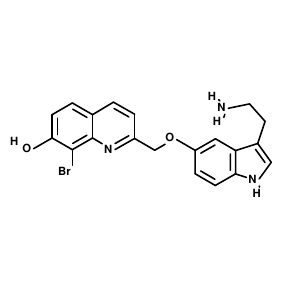}
  \end{minipage}
  \vspace{-4\baselineskip}
  \caption{Chemical structures of serotonin $5$-HT (left) and of photoactivatable serotonin BHQ-\!\textit{O}-5HT (right).}
\label{fig:serotonin_structures_combined}
\end{figure}

Here, we examine the kinetics of an initial concentration \(c^0(x) = 100 \ \mu \text{M}\) of BHQ-\!\textit{O}-5HT, microinjected into a single-cell \textit{Xenopus laevis} embryo with a radius \(L = 5.5 \cdot 10^{-2} \ \text{cm}\). In this setting, $c(x,t)$ represents the concentration of BHQ-\!\textit{O}-5HT at time $t$ and at a distance $z$ from the center of the embryo. Emulating the experimental conditions detailed in \cite{Lamp+Bio_Sero}, we employ the function \(I(\lambda)\), shown in Figure \ref{fig:Test2_I}, to represent the normalized intensity profile of UV radiation emitted by a mercury lamp equipped with two glass filters that restrict the wavelengths to \(365 \pm 15 \ \text{nm}\). Furthermore, we focus on wavelengths between \(\lambda_0 = 300 \ \text{nm}\) and \(\lambda^* = 400 \ \text{nm}\), as higher frequency radiation may potentially damage biological tissues during photoactivation \cite{McLain2015}. The molar absorption coefficients for $5$-HT and BHQ-\!\textit{O}-5HT, extracted from the absorbance data in \cite[Figure 4]{Vecchia_Chimica} and from \cite[Figure S3]{Lamp+Bio_Sero}, respectively, are shown in Figure \ref{fig:Test2_Epsilon}. Additionally, in order to replicate the experimental photolysis course of BHQ-\!\textit{O}-5HT detailed in \cite{McLain2015}, we set \(T = 180\ \text{s}\), \(f(x) = 3 \cdot 10^{-1} x\) and $a_1=a_2=1.$

\begin{figure}[htbp]
  \centering
  \begin{minipage}{0.45\textwidth}
    \centering
    \includegraphics[width=0.9\linewidth]{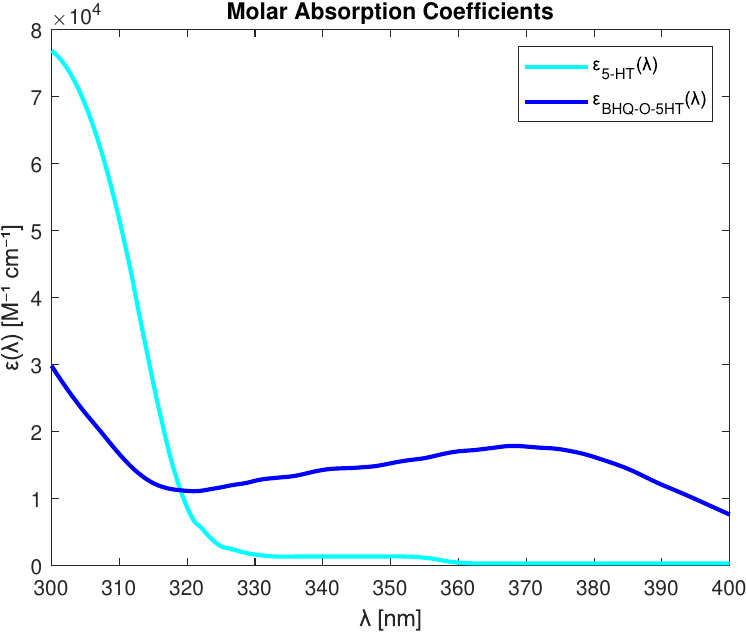}
    \caption{Molar absorption coefficients functions for $5$-HT and BHQ-\!\textit{O}-5HT.}
    \label{fig:Test2_Epsilon}
  \end{minipage}\hspace{0.05\textwidth} 
  \begin{minipage}{0.45\textwidth}
    \centering
    \includegraphics[width=0.9\linewidth]{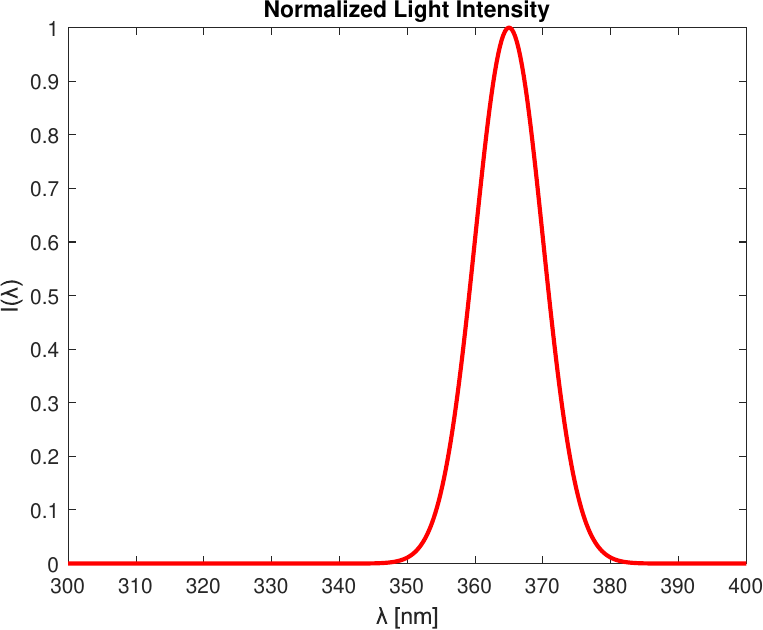}
    \caption{Spectrum of the UV mercury lamp with glass filters.}
    \label{fig:Test2_I}
  \end{minipage}
\end{figure}

The numerical simulation of \hyperref[defn: Test_2]{Test 2} obtained by the three steps, fourth order DQ method \eqref{eq:DQ_method} with second Gregory rule ($n_0=3$) and stepsizes $\Delta x=3.1 \cdot 10^{-4} \ \text{cm},$ $\Delta t=1.0 \ \text{s}$ and $\Delta\lambda=5.5\cdot 10^{-1} \ \text{nm},$ is presented in Figure \ref{fig:Ar2_Reference}. The positivity and monotonicity properties of the numerical solution are evident from the plots. Further insights are provided with Figure \ref{fig:Minimum_Conc}, where the minimum BHQ-\!\textit{O}-5HT concentration over space
\begin{equation*}
    m_c(t_n)=\min_{j=0,\dots,N_x} \!c_j^n\approx\min_{x\in [0,L]}c(x,t_n), \qquad \qquad \qquad n=0,\dots, N_t,
\end{equation*}
is plotted as a positive and non-increasing function of time, for both the PC and DQ schemes. 

\begin{figure}[htbp]
  \centering
  \hspace{-1.6 cm}\begin{minipage}{0.45\textwidth}
    \centering
    \includegraphics[width=1.3\linewidth]{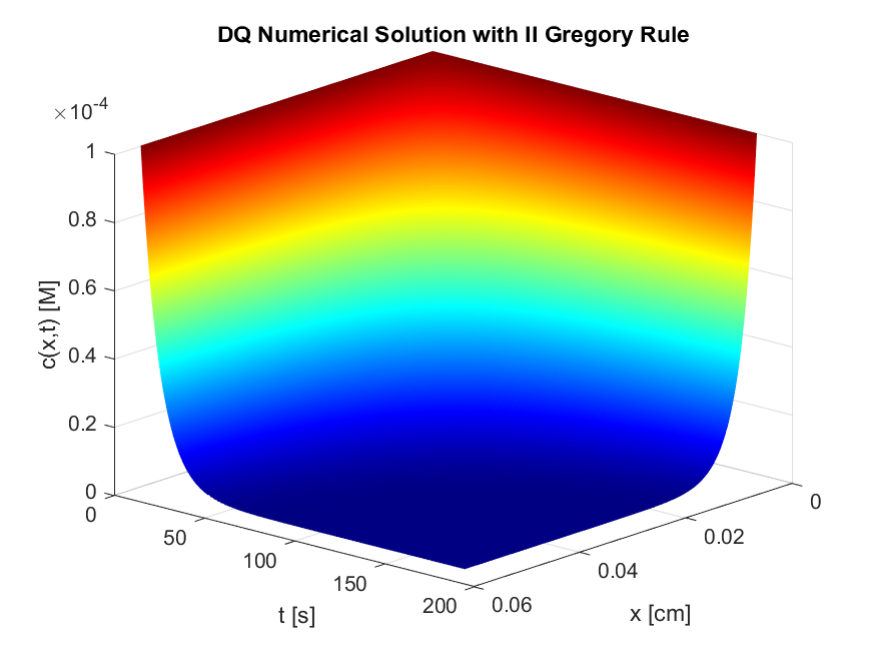}
  \end{minipage}\hspace{0.05\textwidth} 
  \begin{minipage}{0.45\textwidth}
    \centering
    \includegraphics[width=1.26\linewidth]{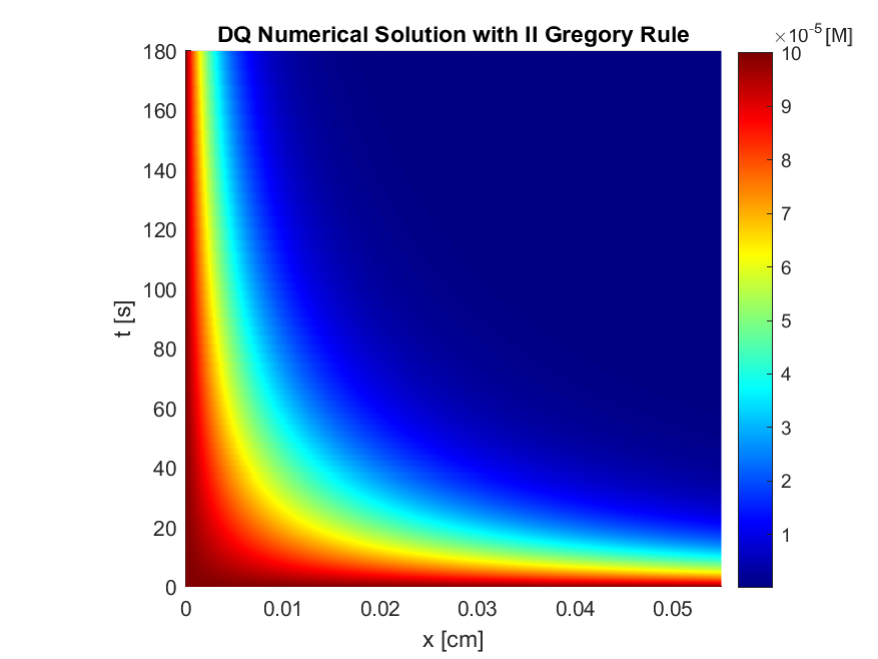}
  \end{minipage}
  \caption{Numerical solution of \hyperref[defn: Test_2]{Test 2} computed by the DQ method \eqref{eq:DQ_method}  with second Gregory rule ($n_0=3$) and stepsizes $\Delta x=3.1 \cdot 10^{-4} \ \text{cm},$ $\Delta t=1.0 \ \text{s}$ and  $\Delta\lambda=5.5\cdot 10^{-1} \ \text{nm}.$}
  \label{fig:Ar2_Reference}
\end{figure}

In order to compare our simulated results with the experimental findings of \cite{McLain2015}, we analyze the percentage reduction in the total concentration of BHQ-\!\textit{O}-5HT at time $t_n$, defined as follows
\begin{equation}
R_c(t_n) = \dfrac{\Delta x}{2}\dfrac{ \left(c_0^n+2\sum_{j=1}^{N_x-1}c_j^n+c_{N_x}^n\right)}{C_0(L)}\approx\frac{\int_{0}^L c(x,t_n) \ dx}{\int_{0}^L c^0(x) \ dx} , \qquad \qquad \qquad n=0,\dots, N_t.
\end{equation}
The PC and DQ simulation outcomes of Figure \ref{fig:Photo_course} correctly reproduce the photolysis dynamics in \cite[Figure 4]{McLain2015}, with the amount of photoactivatable serotonin remaining below $10\%$ after $3$ minutes. 

\begin{figure}[htbp]
  \centering
  \begin{minipage}{0.45\textwidth}
    \centering
    \includegraphics[width=0.85\linewidth]{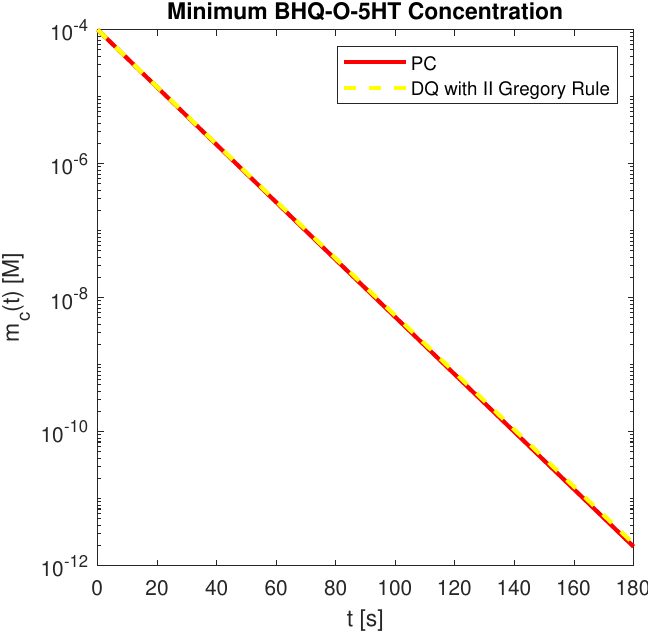}
    \caption{Minimum BHQ-\!\textit{O}-5HT concentration for the PC and DQ schemes applied to \hyperref[defn: Test_2]{Test 2}, plotted on a logarithmic scale for the y-axis.}
    \label{fig:Minimum_Conc}
  \end{minipage}\hspace{0.05\textwidth} 
  \begin{minipage}{0.45\textwidth}
    \centering
    \includegraphics[width=0.85\linewidth]{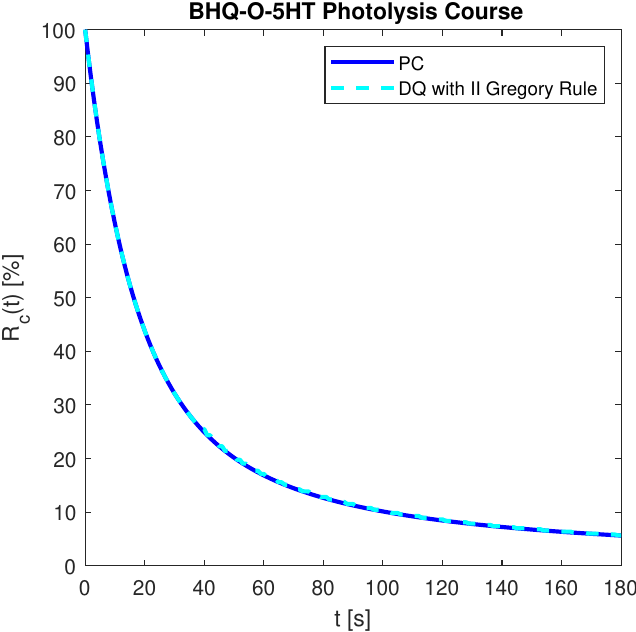}
    \caption{Percentage reduction in the total concentration of BHQ-\!\textit{O}-5HT as a function of time.}
    \label{fig:Photo_course}
  \end{minipage}
\end{figure}

With the aim of emphasizing the importance of the dynamical consistency property in simulations, we extend the analysis of \hyperref[defn: Test_2]{Test 2} to a longer time interval, setting \(T = 1 \ \text{hour}.\) Using stepsizes \(\Delta x = 3.1 \cdot 10^{-4} \ \text{cm},\) \(\Delta t = 5.6 \cdot 10^{-3} \ \text{h}\) and \(\Delta \lambda = 5.5 \cdot 10^{-1} \ \text{nm},\) we compare the positive and monotonicity-preserving NSFD method \eqref{eq:NSFD_scheme} against the following first-order FTRQ scheme
\begin{equation}\label{eq:FTRQ}
    c_j^{n+1} = c_j^n \left(1 - f(x_j) \, \Delta t \, \Delta \lambda \! \sum_{l=0}^{N_\lambda-1} \! \rho \left( \iota\left( \lambda_l, \, C_0(x_j), \, \Delta x \sum_{r=0}^{j-1} c_r^n \right) \right) \right),
\end{equation}
obtained by combining a Forward in Time (FT-) finite difference derivative approximation and a Rectangular Quadrature (-RQ) integral discretization. Figures  \ref{fig:Second_Exp_double} and  \ref{fig:FTRQvsNSFD} provide an overview of the comparison. Although both methods are linearly convergent, the FTRQ scheme fails to reproduce the physical behavior of the phenomenon, leading to oscillations and non-positive concentration values due to the insufficiently small $\Delta t$. In contrast, the NSFD method preserves monotonicity and positivity, demonstrating stability and greater reliability under the same discretization parameters. This property is also characteristic of the PC and DQ methods which, at the cost of increased computational effort, achieve higher orders of accuracy (the corresponding simulations are not included here, for the sake of brevity).

\begin{figure}[htbp]
  \centering
  \hspace{-1.6 cm}\begin{minipage}{0.45\textwidth}
    \centering
    \includegraphics[width=1.3\linewidth]{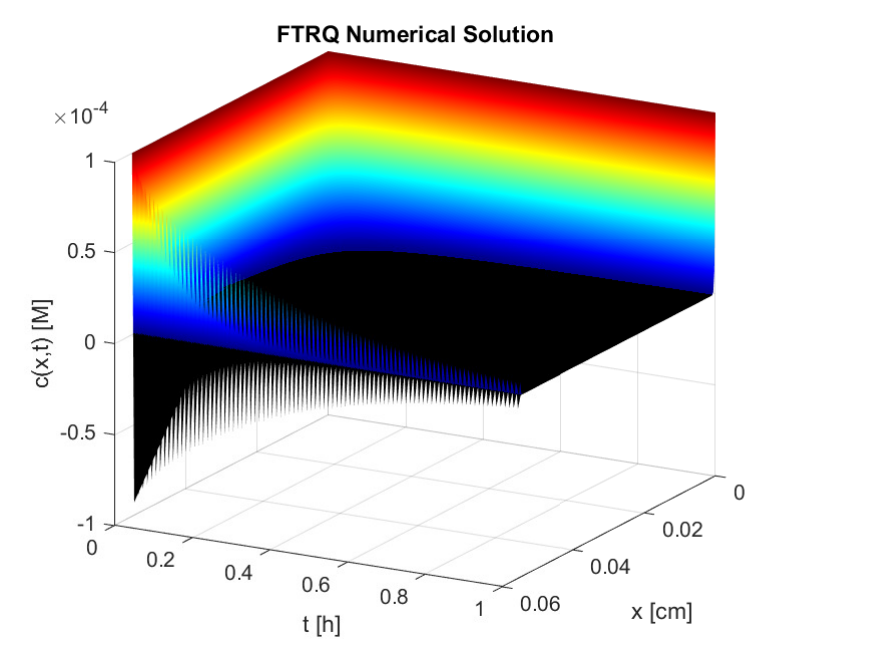}
  \end{minipage}\hspace{0.05\textwidth} 
  \begin{minipage}{0.45\textwidth}
    \centering
    \includegraphics[width=1.3\linewidth]{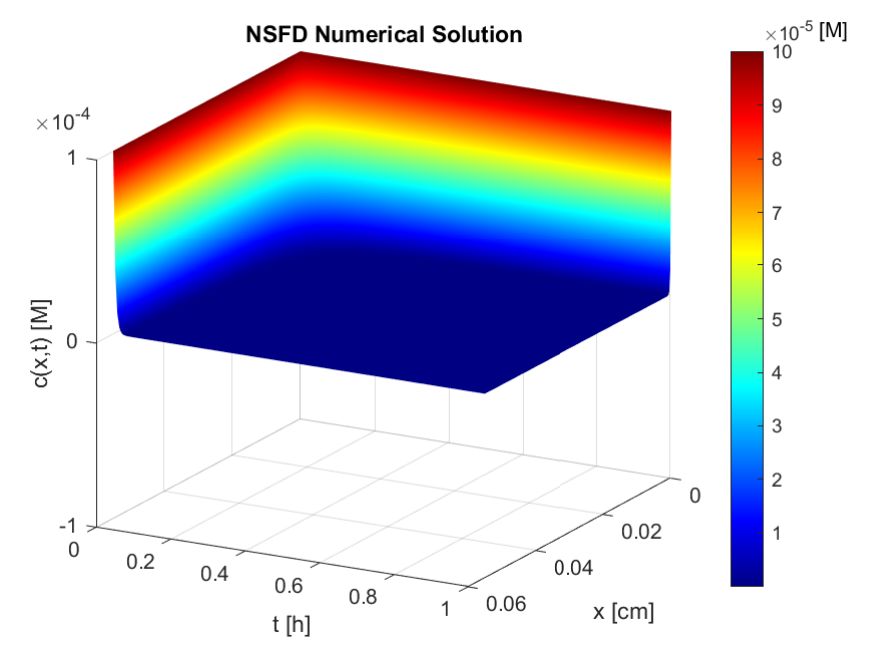}
  \end{minipage}
  \caption{One hour numerical simulation of \hyperref[defn: Test_2]{Test 2} computed using the FTRQ method \eqref{eq:FTRQ} (left) and the NSFD scheme \eqref{eq:NSFD_scheme} (right) with \(\Delta x = 3.1 \cdot 10^{-4} \ \text{cm},\) \(\Delta t = 5.6 \cdot 10^{-3} \ \text{h}\) and \(\Delta \lambda = 5.5 \cdot 10^{-1} \ \text{nm}\). Here, non-positive values of the solutions are plotted in black.}
  \label{fig:Second_Exp_double}
\end{figure}

\begin{figure}[htbp]
\centering
\includegraphics[width=0.9\linewidth]{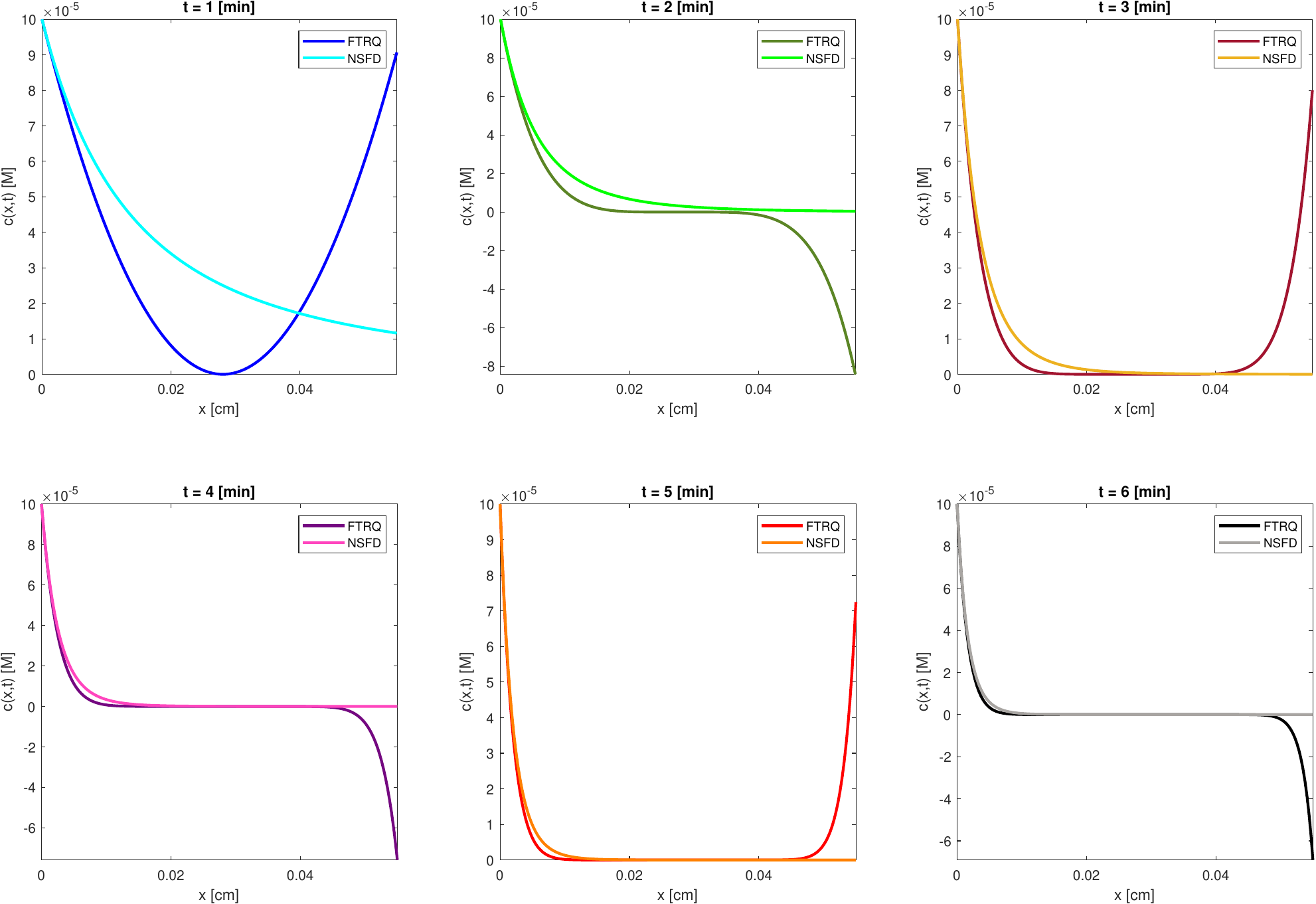}
  \caption{Early stages of NSFD and FTRQ numerical solutions to \hyperref[defn: Test_2]{Test 2} with stepsizes $\Delta x=3.1 \cdot 10^{-4} \ \text{cm},$ $\Delta t=3.3 \cdot 10^{-1} \ \text{min}$ and  $\Delta\lambda=5.5\cdot 10^{-1} \ \text{nm}.$}
  \label{fig:FTRQvsNSFD}
\end{figure}

\medskip

\textbf{Test 3}\label{defn: Test_3} - Our last example addresses the photodegradation of cadmium yellow, a synthetic pigment extensively employed by artists throughout the $19^{\text{th}}$ and $20^{\text{th}}$ centuries \cite{Miro1,Matisse2,Matisse3,Monico_Urlo,Pouyet2015}. Cadmium pigments, primarily composed of cadmium sulfide (CdS), are subject to the following photochemical reaction \cite{monico2018role}
\begin{equation}\label{eq:chemical}
CdS + 4h^+_{surf} + 2H_2O + O_2 \longrightarrow CdSO_4 + 4H^+,
\end{equation}
a process triggered by light and environmental humidity, which leads to the formation of cadmium sulfate (CdSO$_4$) and results in color alteration. Following the arguments in \cite{Ceseri_Natalini_Pezzella} and neglecting the influence of secondary reactants and products, the kinetics of CdS can be effectively modeled through the system \eqref{eq:continuous_model}. Specifically, we simulate the degradation mechanism of an initial concentration of $c^0(x)=3.34 \cdot 10^{-2} \ \text{mol} \ \text{cm}^{-3}$ on a painted layer of depth $L=7.00 \cdot 10^{-3} \, \text{cm}.$ Since cadmium sulfide behaves as a semiconductor with a defined band-gap energy of $E_{bg}=2.42$ eV \cite{Band_Gap}, we set the upper wavelength bound to $\lambda_*=512.33 \ \text{nm}$, which corresponds to supra-band-gap light radiation. The molar absorptivity \(\varepsilon_{\text{CdS}}(\lambda)\) of cadmium sulfide is computed using the relation \cite{gobrecht2015combining}  
\begin{equation*}
    \varepsilon_{\text{CdS}}(\lambda) \, c^{0} d_L = -\log\left(R(\lambda)\right), \qquad \qquad \quad \lambda \in [\lambda_0, \lambda_*],
\end{equation*}
where \(R(\lambda)\) corresponds to the hex-CdS diffuse reflectance UV-Vis spectral data provided in \cite[Supporting Information, Sec. 2.1]{monico2018role}. The parameter \(d_L = 5\cdot 10^{-4} \, \text{cm}\) indicates the thickness of the cadmium sulfate top crust, beyond which light penetration is significantly reduced due to CdSO\(_4\) formation. As for the cadmium sulfate, in absence of experimental data, we assume $\varepsilon_{\text{CdSO}_4}(\lambda)=4\varepsilon_{\text{CdS}}(\lambda)$ (we refer to Figure \ref{fig:Test3_Epsilon} for the plots). Furthermore, we determine the function \(I(\lambda)\), displayed in Figure \ref{fig:Test3_I}, by applying a local quadratic regression to the xenon lamp emission data of \cite[Fig. 1]{Monico_Dati_per_I}.

\begin{figure}[htbp]
  \centering    
  \begin{minipage}{0.45\textwidth}
    \centering
    \includegraphics[width=0.9\linewidth]{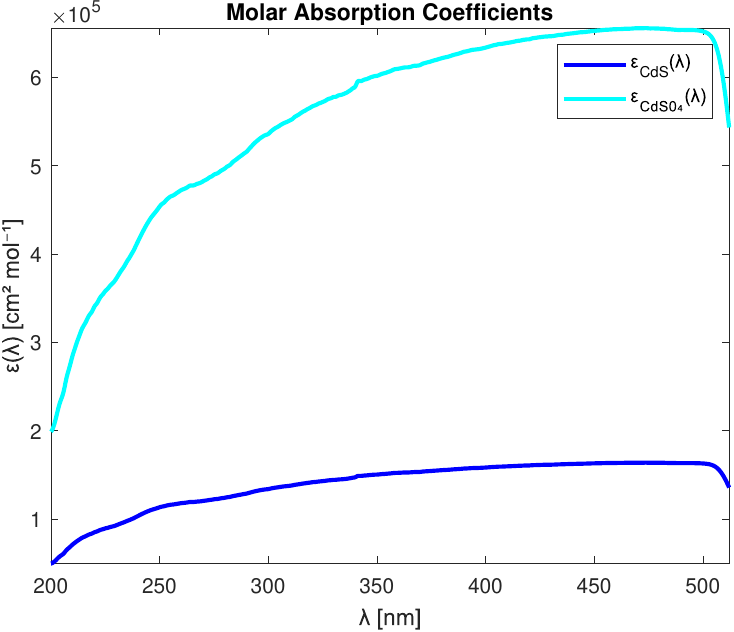}
    \caption{Molar absorption coefficients functions for CdS and CdSO$_4$.}
    \label{fig:Test3_Epsilon}
  \end{minipage}\hspace{0.05\textwidth} 
  \begin{minipage}{0.45\textwidth}
    \centering
    \includegraphics[width=0.9\linewidth]{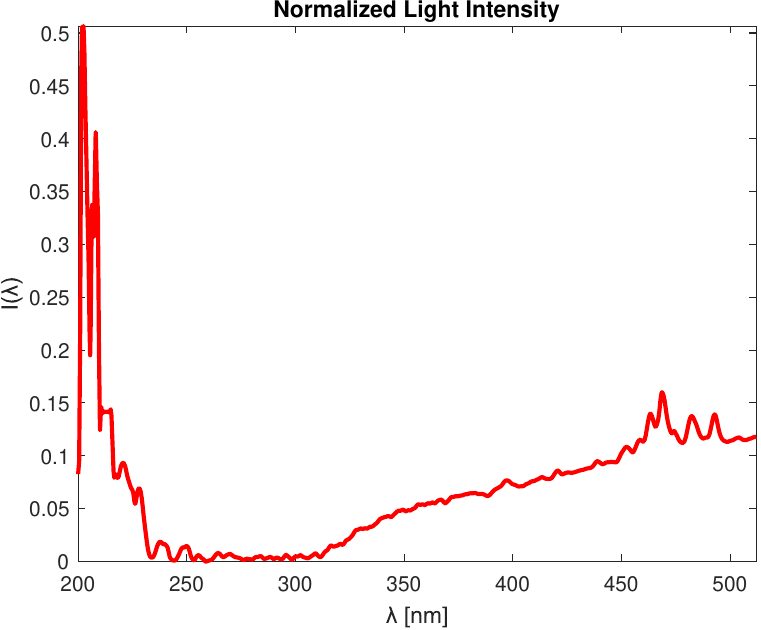}
    \caption{Normalized spectrum of the UV xenon lamp.}
    \label{fig:Test3_I}
  \end{minipage}
\end{figure}

The environmental humidity required to trigger the reaction is modeled by the following water profile
\begin{equation*}
    f(x)=10^{-3}\left(1-\dfrac{w_0}{w_{*}}\right)(L-x), \qquad \qquad \qquad x\in [0,L],
\end{equation*}
which decreases linearly with depth. In this context, $w_0=5.77 \cdot 10^{-7} \ \text{mol} \ \text{cm}^{-3}$ denotes the lowest level of humidity below which the reaction does not occur and $w_*=1.22 \cdot 10^{-6} \ \text{mol} \ \text{cm}^{-3}$ is a given reference value. Figure \ref{fig:Ar2_Reference} illustrates the simulation results of \hyperref[defn: Test_3]{Test 3}, obtained by using the fourth order DQ method \eqref{eq:DQ_method} with the second Gregory rule ($n_0=3$) and stepsizes $\Delta x=1.4 \cdot 10^{-4} \ \text{cm}$, $\Delta t=1.0 \ \text{s}$ and $\Delta\lambda=1 \ \text{nm}$. In compliance with experimental observations (cf. \cite{monico2018role,Monico_Dati_per_I}), the degradation phenomenon is predominantly confined to a shallow region of the painting (i.e., for $x\leq d_L$) due to the formation of a CdSO$_4$ reflective layer which significantly attenuates light penetration. 

\begin{figure}[htbp]
  \centering
  \hspace{-1.65 cm}\begin{minipage}{0.45\textwidth}
    \centering
    \includegraphics[width=1.3\linewidth]{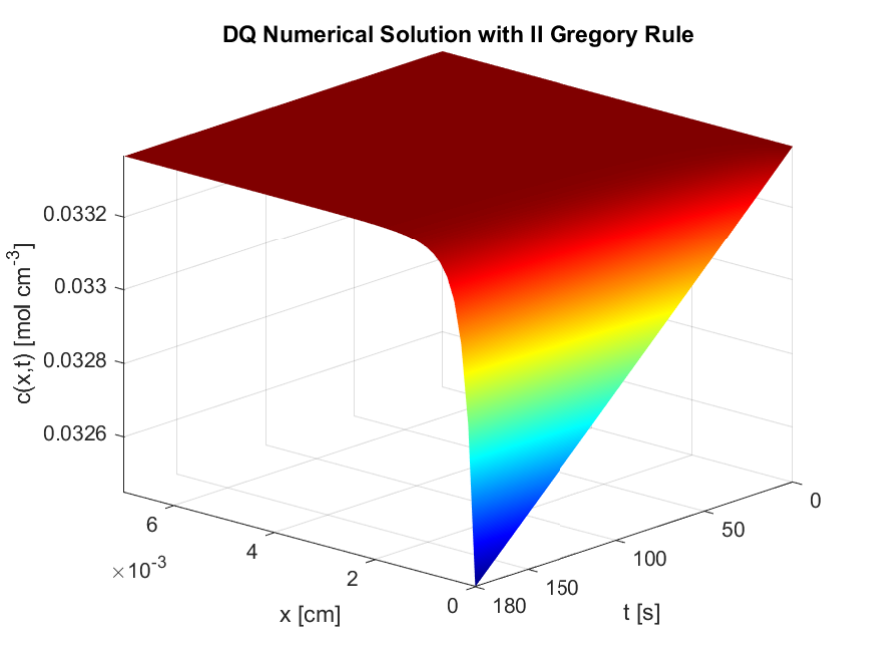}
  \end{minipage}\hspace{0.09\textwidth} 
  \begin{minipage}{0.45\textwidth}
    \centering
    \includegraphics[width=1.26\linewidth]{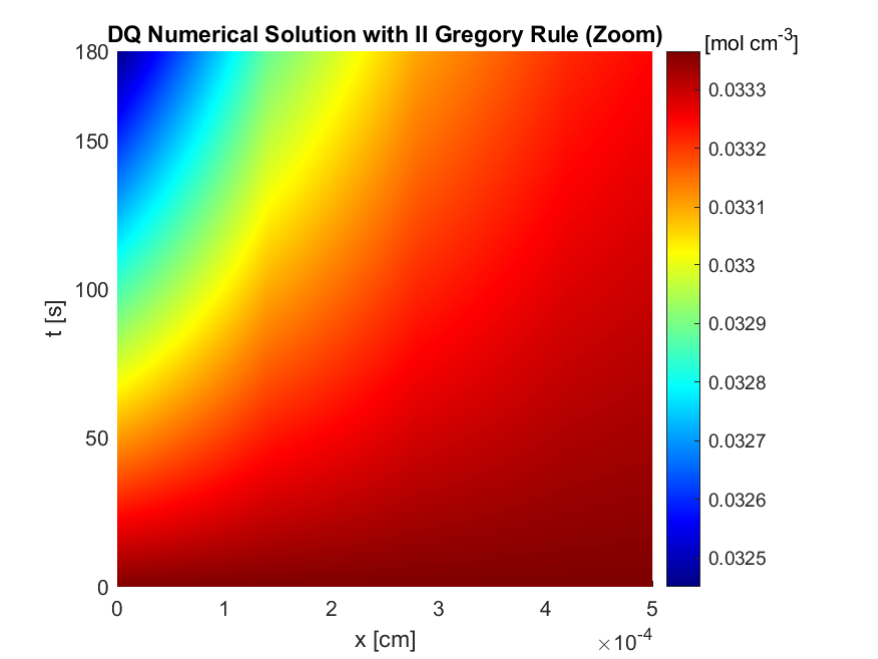}
  \end{minipage}
  \caption{Numerical solution of \hyperref[defn: Test_3]{Test 3} computed by the DQ method \eqref{eq:DQ_method}  with second Gregory rule ($n_0=3$) and stepsizes $\Delta x=1.4 \cdot 10^{-4} \ \text{cm},$ $\Delta t=1.0 \ \text{s}$ and  $\Delta\lambda=1 \ \text{nm}.$}
  \label{fig:Ar3_Reference}
\end{figure}

An equivalent dimensionless reformulation of \hyperref[defn: Test_3]{Test 3} is simulated in \cite{Ceseri_Natalini_Pezzella} using a dynamically consistent PCRQ scheme, which employs the RQ integrator \eqref{eq:RQ} for the predictor phase and the trapezoidal rule as a corrector. Being quadratically convergent, the PCRQ method is undoubtedly  less accurate than the higher-order DQ scheme \eqref{eq:DQ_method}. Here, we compare the PCRQ discretization with the second-order PC method \eqref{eq:PC}, which shares the same convergence properties and similar computational demands. Motivated by the results of \hyperref[defn: Test_1]{Test 1}, we select $\phi_2(\Delta t)$ in \eqref{eq:Denominator_Functions} as the denominator function for the PC scheme \eqref{eq:PC}. Figure \ref{fig:Ar3_Confronto} shows the mean-space error as a function of time for both the PCRQ and PC simulations with $\Delta x=1.4 \cdot 10^{-4} \ \text{cm},$ $\Delta t=0.5 \ \text{s}$ and $\Delta\lambda=1 \ \text{nm}$. Specifically, the error is defined as
\begin{equation} 
    e(t_n)=\dfrac{1}{N_x}\sum_{j=0}^{N_z} \left|c_j^n-\bar{\mathcal{C}}_j^n\right|, \qquad \qquad \qquad n=0,\dots, N_t,
\end{equation}
where $\{\bar{\mathcal{C}}_j^n\}$ denotes the reference solution computed using the fourth-order DQ method \eqref{eq:DQ_method} with $n_0=3,$ second Gregory weights and the same stepsizes values. From the plots in Figure \ref{fig:Ar3_Confronto}, it is evident that the PC integrator \eqref{eq:PC} outperforms the PCRQ one in \cite{Ceseri_Natalini_Pezzella}, as it yields a lower mean-space error at each time step, for the same discretization steplengths.

\begin{figure}[htbp]
\centering
\includegraphics[width=0.55\linewidth]{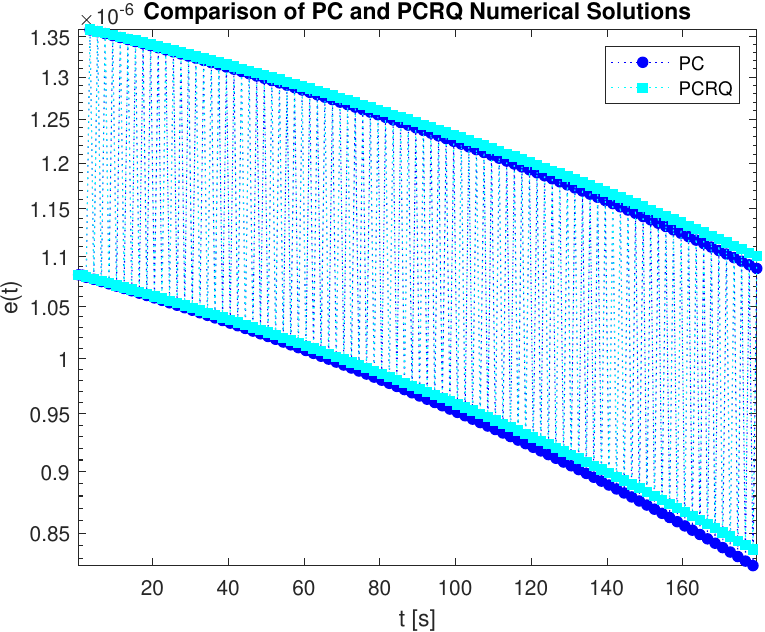}
  \caption{Time evolution of the mean-space error for the PC and PCRQ simulations of \hyperref[defn: Test_3]{Test 3}, computed with $\Delta x=1.4 \cdot 10^{-4} \ \text{cm},$ $\Delta t=0.5 \ \text{s}$ and $\Delta\lambda=1 \ \text{nm}$.}
  \label{fig:Ar3_Confronto}
\end{figure}

\section{Conclusions}\label{sec:Conclusions}
In this work, we designed three classes of dynamically consistent numerical methods for the integro-differential photochemical model \eqref{eq:continuous_model}. A key property of the proposed discretizations is their capacity to preserve positivity in the numerical solutions, regardless of the chosen stepsizes values. The different methods provide a flexible simulation framework to meet various requirements, depending on the phenomenon under investigation and the specific application. The first order non-standard finite difference (NSFD) scheme \eqref{eq:NSFD_scheme} and the quadratically convergent Predictor-Corrector (PC) method \eqref{eq:PC} are straightforward to implement and ensure the monotonicity of the numerical solution, as well. The Direct Quadrature (DQ) integrator \eqref{eq:DQ_method}, on the other hand, achieves higher accuracy and would generally be preferred for advanced simulations. However, due to its inherent nonlinearity and greater computational complexity, the DQ scheme is less suited for model calibration, which typically involves numerous repeated simulations. In such cases, the NSFD and PC methods, although less accurate, offer a more efficient and feasible alternative.

Future research could explore additional realistic applications of the non-local model \eqref{eq:continuous_model}, which already demonstrates significant versatility. From a numerical standpoint, the proposed methods could undergo further refinement to improve computational efficiency. A theoretical investigation aimed at defining \textit{ad hoc} denominator functions for the NSFD framework, specifically tailored to the non-local nature of the problem, could enhance accuracy for larger time steps. Furthermore, the embedding of DQ schemes with $n_0>1$ into the PC approach may yield high-order explicit methods that combine accuracy with lower demands. While these extensions appear ideally promising, their practical implementation and computational efficiency require careful assessment, which we plan to address in future work.

\bibliography{Preprint_Pezzella}

\begin{thebibliography}{10}

\bibitem{anderHeiden1982}
U.~an~der Heiden and M.~C. Mackey.
\newblock The dynamics of production and destruction: {Analytic} insight into complex behavior.
\newblock {\em Journal of Mathematical Biology}, 16(1):75--101, Dec 1982.
\newblock \href {https://doi.org/10.1007/BF00275162} {\path{doi:10.1007/BF00275162}}.

\bibitem{Lubuma_Chem}
R.~Anguelov, Y.~Dumont, and J.~M.-S. Lubuma.
\newblock {On nonstandard finite difference schemes in biosciences}.
\newblock {\em AIP Conference Proceedings}, 1487(1):212--223, 10 2012.
\newblock \href {https://doi.org/10.1063/1.4758961} {\path{doi:10.1063/1.4758961}}.

\bibitem{Anguelov}
R.~Anguelov and J.M.-S. Lubuma.
\newblock Contributions to the mathematics of the nonstandard finite difference method and applications.
\newblock {\em Numerical Methods for Partial Differential Equations}, 17(5):518 – 543, 2001.
\newblock \href {https://doi.org/10.1002/num.1025} {\path{doi:10.1002/num.1025}}.

\bibitem{Atkins-DePaula}
P.~Atkins and J.~De~Paula.
\newblock {\em Elements of physical chemistry}.
\newblock Oxford University Press, USA, 2013.

\bibitem{Baranov2014}
V.~I. Baranov and L.~A. Gribov.
\newblock A simple model for predicting the course of photochemical reactions.
\newblock {\em High Energy Chemistry}, 48(6):363--370, Nov 2014.
\newblock \href {https://doi.org/10.1134/S0018143914060022} {\path{doi:10.1134/S0018143914060022}}.

\bibitem{Photochemical_MAth_1}
V.~I. Baranov, L.~A. Gribov, and I.~V. Mikhailov.
\newblock A mathematical model of photochemical transformations: {Analysis} of the influence of basic characteristics of the model.
\newblock {\em High Energy Chemistry}, 51(6):433--439, Nov 2017.
\newblock \href {https://doi.org/10.1134/S0018143917060030} {\path{doi:10.1134/S0018143917060030}}.

\bibitem{Photochemical_MAth_4}
{Benne, B.}, {Dobrijevic, M.}, {Cavalié, T.}, {Loison, J.-C.}, and {Hickson, K. M.}
\newblock A photochemical model of triton’s atmosphere paired with an uncertainty propagation study.
\newblock {\em A\&A}, 667:A169, 2022.
\newblock \href {https://doi.org/10.1051/0004-6361/202244447} {\path{doi:10.1051/0004-6361/202244447}}.

\bibitem{Blanes}
{Blanes, S.}, {Iserles, A.}, and {Macnamara, S.}
\newblock Positivity-preserving methods for ordinary differential equations.
\newblock {\em ESAIM: M2AN}, 56(6):1843--1870, 2022.
\newblock URL: \url{10.1051/m2an/2022042}, \href {https://doi.org/10.1051/m2an/2022042} {\path{doi:10.1051/m2an/2022042}}.

\bibitem{Bolley1978}
C.~Bolley and M.~Crouzeix.
\newblock Conservation de la positivité lors de la discrétisation des problèmes d'évolution paraboliques.
\newblock {\em ESAIM: Mathematical Modelling and Numerical Analysis - Modélisation Mathématique et Analyse Numérique}, 12(3):237--245, 1978.

\bibitem{Brunner}
H.~Brunner and P.J. van~der Houwen.
\newblock {\em The numerical solution of Volterra equations}.
\newblock CWI monograph ; 3. North-Holland, Amsterdam ;, 1986.

\bibitem{Buonomo_2024}
B.~Buonomo, E.~Messina, C.~Panico, and A.~Vecchio.
\newblock A stable numerical method for integral epidemic models with behavioral changes in contact patterns.
\newblock {\em ETNA - Electronic Transactions on Numerical Analysis}, 61:137–156, 2024.
\newblock \href {https://doi.org/10.1553/etna_vol61s137} {\path{doi:10.1553/etna_vol61s137}}.

\bibitem{Ceseri_Natalini_Pezzella}
M.~Ceseri, R.~Natalini, and M.~Pezzella.
\newblock {An Integro-differential Model of Cadmium Yellow Photodegradation}.
\newblock {\em arXiv}, 2024.
\newblock \href {https://doi.org/10.48550/arXiv.2411.06997} {\path{doi:10.48550/arXiv.2411.06997}}.

\bibitem{Vecchia_Chimica}
A.~Chattopadhyay, R.~Rukmini, and S.~Mukherjee.
\newblock Photophysics of a neurotransmitter: Ionization and spectroscopic properties of serotonin.
\newblock {\em Biophysical Journal}, 71:1952--1960, October 1996.

\bibitem{Uniq_Fix}
J.A. Cid-Araújo.
\newblock The uniqueness of fixed points for decreasing operators.
\newblock {\em Applied Mathematics Letters}, 17(7):861–866, July 2004.
\newblock \href {https://doi.org/10.1016/j.aml.2004.06.018} {\path{doi:10.1016/j.aml.2004.06.018}}.

\bibitem{Clarelli2013}
F.~Clarelli, C.~Di~Russo, R.~Natalini, and M.~Ribot.
\newblock A fluid dynamics model of the growth of phototrophic biofilms.
\newblock {\em Journal of Mathematical Biology}, 66(7):1387--1408, 2013.
\newblock \href {https://doi.org/10.1007/s00285-012-0538-5} {\path{doi:10.1007/s00285-012-0538-5}}.

\bibitem{Conte2022}
D.~Conte, N.~Guarino, G.~Pagano, and B.~Paternoster.
\newblock On the advantages of nonstandard finite difference discretizations for differential problems.
\newblock {\em Numerical Analysis and Applications}, 15(3):219--235, Sep 2022.
\newblock \href {https://doi.org/10.1134/S1995423922030041} {\path{doi:10.1134/S1995423922030041}}.

\bibitem{d2012simplified}
C.A. D'Aquino, W.~Balmant, R.L.L. Ribeiro, M.~Munaro, J.V.C. Vargas, and S.C. Amico.
\newblock A simplified mathematical model to predict {PVC} photodegradation in photobioreactors.
\newblock {\em Polymer Testing}, 31(5):638--644, 2012.
\newblock \href {https://doi.org/10.1016/j.polymertesting.2012.03.002} {\path{doi:10.1016/j.polymertesting.2012.03.002}}.

\bibitem{Davis}
P.J. Davis and P.~Rabinowitz.
\newblock {\em Methods of Numerical Integration. Second Edition}.
\newblock Computer Science and Applied Mathematics. Academic Press, Inc., Orlando, 1984.
\newblock \href {https://doi.org/10.1016/C2013-0-10566-1} {\path{doi:10.1016/C2013-0-10566-1}}.

\bibitem{EILERS}
P.H.C. Eilers and J.C.H. Peeters.
\newblock A model for the relationship between light intensity and the rate of photosynthesis in phytoplankton.
\newblock {\em Ecological Modelling}, 42(3):199--215, 1988.
\newblock \href {https://doi.org/10.1016/0304-3800(88)90057-9} {\path{doi:10.1016/0304-3800(88)90057-9}}.

\bibitem{Band_Gap}
Y.~Fan, M.~Deng, G.~Chen, Q.~Zhang, Y.~Luo, D.~Li, and Q.~Meng.
\newblock Effect of calcination on the photocatalytic performance of {CdS} under visible light irradiation.
\newblock {\em Journal of Alloys and Compounds}, 509(5):1477--1481, 2011.
\newblock \href {https://doi.org/10.1016/j.jallcom.2010.10.044} {\path{doi:10.1016/j.jallcom.2010.10.044}}.

\bibitem{Formaggia_Scotti}
L.~Formaggia and A.~Scotti.
\newblock Positivity and conservation properties of some integration schemes for mass action kinetics.
\newblock {\em SIAM Journal on Numerical Analysis}, 49(3):1267--1288, 2011.
\newblock \href {https://doi.org/10.1137/100789592} {\path{doi:10.1137/100789592}}.

\bibitem{PC_Volterra}
L.~Garey.
\newblock {Predictor-corrector methods for nonlinear Volterra integral equations of the second kind}.
\newblock {\em BIT Numerical Mathematics}, 12(3):325--333, 1972.
\newblock \href {https://doi.org/10.1007/BF01932304} {\path{doi:10.1007/BF01932304}}.

\bibitem{gobrecht2015combining}
A.~Gobrecht, R.~Bendoula, J.-M. Roger, and V.~Bellon-Maurel.
\newblock {Combining linear polarization spectroscopy and the Representative Layer Theory to measure the Beer--Lambert law absorbance of highly scattering materials}.
\newblock {\em Analytica Chimica Acta}, 853:486--494, 2015.
\newblock \href {https://doi.org/10.1016/j.aca.2014.10.014} {\path{doi:10.1016/j.aca.2014.10.014}}.

\bibitem{Miro1}
M.~Gomez~Lobon, M.~Ghirardello, E.~Juncosa~Darder, C.~Palomino~Cabello, M.~Bauza, M.~Cotte, A.~Burnstock, A.~Nevin, S.~R. Amato, F.~C. Izzo, and D.~Comelli.
\newblock {A study of cadmium yellow paints from Joan Miró’s paintings and studio materials preserved at the Fundació Miró Mallorca}.
\newblock {\em Heritage Science}, 11(1), 2023.
\newblock \href {https://doi.org/10.1186/s40494-023-00987-4} {\path{doi:10.1186/s40494-023-00987-4}}.

\bibitem{Hairer_2006}
E.~Hairer, M.~Hochbruck, A.~Iserles, and C.~Lubich.
\newblock Geometric numerical integration.
\newblock {\em Oberwolfach Reports}, 3(1):805–882, 2006.
\newblock \href {https://doi.org/10.4171/owr/2006/14} {\path{doi:10.4171/owr/2006/14}}.

\bibitem{Hoang03102023}
M.~T. Hoang.
\newblock A novel second-order nonstandard finite difference method preserving dynamical properties of a general single-species model.
\newblock {\em International Journal of Computer Mathematics}, 100(10):2047--2062, 2023.
\newblock \href {https://doi.org/10.1080/00207160.2023.2248304} {\path{doi:10.1080/00207160.2023.2248304}}.

\bibitem{NSFD_High}
M.T. Hoang.
\newblock High-order nonstandard finite difference methods preserving dynamical properties of one-dimensional dynamical systems.
\newblock {\em Numerical Algorithms}, Mar 2024.
\newblock \href {https://doi.org/10.1007/s11075-024-01792-1} {\path{doi:10.1007/s11075-024-01792-1}}.

\bibitem{GeCO_Izgin}
T.~Izgin, S.~Kopecz, A.~Martiradonna, and A.~Meister.
\newblock On the dynamics of first and second order geco and gbbks schemes.
\newblock {\em Applied Numerical Mathematics}, 193:43--66, 2023.
\newblock \href {https://doi.org/10.1016/j.apnum.2023.07.014} {\path{doi:10.1016/j.apnum.2023.07.014}}.

\bibitem{PDS}
G.~Izzo, E.~Messina, M.~Pezzella, and A.~Vecchio.
\newblock {Modified Patankar Linear Multistep Methods for Production-Destruction Systems}.
\newblock {\em arXiv}, 2024.
\newblock \href {https://doi.org/10.48550/arXiv.2407.12540} {\path{doi:10.48550/arXiv.2407.12540}}.

\bibitem{Photochemical_MAth_3}
T.~E. {La Cruz}, T.~C. Carvalho, A.~Ramírez, and J.~E. Tábora.
\newblock Implementation of a mathematical model for the photochemical kinetics of a solid form active pharmaceutical ingredient.
\newblock {\em International Journal of Pharmaceutics}, 566:500--512, 2019.
\newblock \href {https://doi.org/10.1016/j.ijpharm.2019.05.054} {\path{doi:10.1016/j.ijpharm.2019.05.054}}.

\bibitem{Laidler}
K.J. Laidler.
\newblock A glossary of terms used in chemical kinetics, including reaction dynamics (iupac recommendations 1996).
\newblock {\em Pure and Applied Chemistry}, 68(1):149--192, 1996.
\newblock \href {https://doi.org/doi:10.1351/pac199668010149} {\path{doi:doi:10.1351/pac199668010149}}.

\bibitem{vertebrate1}
M.~Levin, G.~A. Buznikov, and J.~M. Lauder.
\newblock Of minds and embryos: Left-right asymmetry and the serotonergic controls of pre-neural morphogenesis.
\newblock {\em Developmental Neuroscience}, 28(3):171--185, 05 2006.
\newblock \href {https://doi.org/10.1159/000091915} {\path{doi:10.1159/000091915}}.

\bibitem{Linz_1985}
P.~Linz.
\newblock {\em Analytical and Numerical Methods for Volterra Equations}.
\newblock Society for Industrial and Applied Mathematics, 1985.
\newblock \href {https://doi.org/10.1137/1.9781611970852} {\path{doi:10.1137/1.9781611970852}}.

\bibitem{ODE_Exs}
S.~G. Lobanov and O.~G. Smolyanov.
\newblock Ordinary differential equations in locally convex spaces.
\newblock {\em Uspekhi Mat. Nauk}, 49(3(297)):93--168, 1994.
\newblock \href {https://doi.org/10.1070/RM1994v049n03ABEH002258} {\path{doi:10.1070/RM1994v049n03ABEH002258}}.

\bibitem{Lubuma2015}
Y.A. Lubuma, J.M.-S.and~Terefe.
\newblock A nonstandard {Volterra} difference equation for the {SIS} epidemiological model.
\newblock {\em Revista de la Real Academia de Ciencias Exactas, F{\~A}sicas y Naturales. Serie A. Matem{\~A}{\textexclamdown}ticas}, 109(2):597--602, Sep 2015.
\newblock \href {https://doi.org/10.1007/s13398-014-0203-5} {\path{doi:10.1007/s13398-014-0203-5}}.

\bibitem{MAAFI20163537}
M.~Maafi and W.~Maafi.
\newblock {Modeling and Elucidation of the Kinetics of Multiple Consecutive Photoreactions AB$_4$(4$\Phi$) With $\Phi$-order Kinetics. Application to the Photodegradation of Riboflavin}.
\newblock {\em Journal of Pharmaceutical Sciences}, 105(12):3537--3548, 2016.
\newblock \href {https://doi.org/10.1016/j.xphs.2016.06.030} {\path{doi:10.1016/j.xphs.2016.06.030}}.

\bibitem{Manning2006IntroductionTN}
P.M. Manning and G.F. Margrave.
\newblock {Introduction to non-standard finite-difference modelling}.
\newblock {\em {CREWES Research Report}}, 18:46.1--46.10, 2006.

\bibitem{GeC01}
A.~Martiradonna, G.~Colonna, and F.~Diele.
\newblock Geco: Geometric conservative nonstandard schemes for biochemical systems.
\newblock {\em Applied Numerical Mathematics}, 155:38--57, 2020.
\newblock Structural Dynamical Systems: Computational Aspects held in Monopoli (Italy) on June 12-15, 2018.
\newblock \href {https://doi.org/10.1016/j.apnum.2019.12.004} {\path{doi:10.1016/j.apnum.2019.12.004}}.

\bibitem{Matisse2}
J.~Mass, J.~Sedlmair, C.~S. Patterson, D.~Carson, B.~Buckley, and C.~Hirschmugl.
\newblock {SR-FTIR imaging of the altered cadmium sulfide yellow paints in Henri Matisse’s Le Bonheur de vivre (1905–6) – examination of visually distinct degradation regions}.
\newblock {\em The Analyst}, 138(20):6032, 2013.
\newblock \href {https://doi.org/10.1039/c3an00892d} {\path{doi:10.1039/c3an00892d}}.

\bibitem{Matisse3}
J.~L. Mass, R.~Opila, B.~Buckley, M.~Cotte, J.~Church, and A.~Mehta.
\newblock {The photodegradation of cadmium yellow paints in Henri Matisse’s Le Bonheur de vivre (1905–1906)}.
\newblock {\em Applied Physics A}, 111(1):59–68, 2012.
\newblock \href {https://doi.org/10.1007/s00339-012-7418-0} {\path{doi:10.1007/s00339-012-7418-0}}.

\bibitem{McLain2015}
D.~E. McLain, A.~C. Rea, M.~B. Widegren, and T.~M. Dore.
\newblock Photoactivatable, biologically-relevant phenols with sensitivity toward 2-photon excitation.
\newblock {\em Photochemical {\&} Photobiological Sciences}, 14(12):2151--2158, Dec 2015.
\newblock \href {https://doi.org/10.1039/c5pp00334b} {\path{doi:10.1039/c5pp00334b}}.

\bibitem{MPV_JCD}
E.~Messina, M.~Pezzella, and A.~Vecchio.
\newblock A non-standard numerical scheme for an age-of-infection epidemic model.
\newblock {\em Journal of Computational Dynamics}, 9(2):239--252, 2022.
\newblock \href {https://doi.org/10.3934/jcd.2021029} {\path{doi:10.3934/jcd.2021029}}.

\bibitem{MPV_Axioms}
E.~Messina, M.~Pezzella, and A.~Vecchio.
\newblock Positive numerical approximation of integro-differential epidemic model.
\newblock {\em Axioms}, 11(2), 2022.
\newblock \href {https://doi.org/10.3390/axioms11020069} {\path{doi:10.3390/axioms11020069}}.

\bibitem{MPV_implicit}
E.~Messina, M.~Pezzella, and A.~Vecchio.
\newblock Asymptotic solutions of non-linear implicit volterra discrete equations.
\newblock {\em Journal of Computational and Applied Mathematics}, 425:115068, 2023.
\newblock \href {https://doi.org/10.1016/j.cam.2023.115068} {\path{doi:10.1016/j.cam.2023.115068}}.

\bibitem{MPV_MBE}
E.~Messina, M.~Pezzella, and A.~Vecchio.
\newblock Nonlocal finite difference discretization of a class of renewal equation models for epidemics.
\newblock {\em Mathematical Biosciences and Engineering}, 20(7):11656--11675, 2023.
\newblock \href {https://doi.org/10.3934/mbe.2023518} {\path{doi:10.3934/mbe.2023518}}.

\bibitem{MPV_mixing}
E.~Messina, M.~Pezzella, and A.~Vecchio.
\newblock A long-time behavior preserving numerical scheme for age-of-infection epidemic models with heterogeneous mixing.
\newblock {\em Applied Numerical Mathematics}, 200:344--357, 2024.
\newblock New Trends in Approximation Methods and Numerical Analysis (FAATNA20>22).
\newblock \href {https://doi.org/10.1016/j.apnum.2023.04.009} {\path{doi:10.1016/j.apnum.2023.04.009}}.

\bibitem{Mickens_PDE}
R.~Mickens and T.~Washington.
\newblock {A note on a NSFD discretization of a Sobolev type PDE}.
\newblock {\em Journal of Difference Equations and Applications}, 0(0):1--6, 2024.
\newblock \href {https://doi.org/10.1080/10236198.2024.2404419} {\path{doi:10.1080/10236198.2024.2404419}}.

\bibitem{Mickens_Book_Old}
R.E. Mickens.
\newblock {\em Nonstandard Finite Difference Models of Differential Equations}.
\newblock WORLD SCIENTIFIC, 1993.
\newblock \href {https://doi.org/10.1142/2081} {\path{doi:10.1142/2081}}.

\bibitem{Mickens_Dynamic_Consistency}
R.E. Mickens.
\newblock Dynamic consistency: a fundamental principle for constructing nonstandard finite difference schemes for differential equations.
\newblock {\em Journal of Difference Equations and Applications}, 11(7):645--653, 2005.
\newblock \href {https://doi.org/10.1080/10236190412331334527} {\path{doi:10.1080/10236190412331334527}}.

\bibitem{Mickens_Book}
R.E. Mickens.
\newblock {\em {Nonstandard Finite Difference Schemes}}.
\newblock WORLD SCIENTIFIC, 2020.
\newblock \href {https://doi.org/10.1142/11891} {\path{doi:10.1142/11891}}.

\bibitem{Monico_Urlo}
L.~Monico, L.~Cartechini, F.~Rosi, A.~Chieli, C.~Grazia, S.~De Meyer, G.~Nuyts, F.~Vanmeert, K.~Janssens, M.~Cotte, W.~De Nolf, G.~Falkenberg, I.~Crina~Anca Sandu, E.~Storevik Tveit, J.~Mass, R.~Pereira de~Freitas, A.~Romani, and C.~Miliani.
\newblock {Probing the chemistry of CdS paints in \textit{The Scream} by in situ noninvasive spectroscopies and synchrotron radiation x-ray techniques}.
\newblock {\em Science Advances}, 6(20):eaay3514, 2020.
\newblock \href {https://doi.org/10.1126/sciadv.aay3514} {\path{doi:10.1126/sciadv.aay3514}}.

\bibitem{monico2018role}
L.~Monico, A.~Chieli, S.~De~Meyer, M.~Cotte, W.~de~Nolf, G.~Falkenberg, K.~Janssens, A.~Romani, and C.~Miliani.
\newblock {Role of the Relative Humidity and the Cd/Zn Stoichiometry in the Photooxidation Process of Cadmium Yellows ({CdS}/{Cd$_{1-x}$Zn$_{x}$S}) in Oil Paintings}.
\newblock {\em Chemistry--A European Journal}, 24(45):11584--11593, 2018.
\newblock \href {https://doi.org/10.1002/chem.201801503} {\path{doi:10.1002/chem.201801503}}.

\bibitem{Monico_Dati_per_I}
L.~Monico, K.~Janssens, M.~Cotte, A.~Romani, L.~Sorace, C.~Grazia, B.~G. Brunetti, and C.~Miliani.
\newblock Synchrotron-based {X-ray} spectromicroscopy and electron paramagnetic resonance spectroscopy to investigate the redox properties of lead chromate pigments under the effect of visible light.
\newblock {\em J. Anal. At. Spectrom.}, 30:1500--1510, 2015.
\newblock \href {https://doi.org/10.1039/C5JA00091B} {\path{doi:10.1039/C5JA00091B}}.

\bibitem{Photochemical_MAth_2}
V.~A. Morozov, Yu.~M. Dubina, and E.~A. Smolenskii.
\newblock Mathematical modeling of the dynamics of photoreactions of a five-level molecule.
\newblock {\em Russian Journal of Physical Chemistry B}, 11(2):199--207, Mar 2017.
\newblock \href {https://doi.org/10.1134/S1990793117020099} {\path{doi:10.1134/S1990793117020099}}.

\bibitem{Pandey_2015}
P.~K. Pandey.
\newblock {Non-Standard Difference Method for Numerical Solution of Linear Fredholm Integro-Differential Type Two-Point Boundary Value Problems}.
\newblock {\em OALib}, 02(09):1–10, 2015.
\newblock \href {https://doi.org/10.4236/oalib.1101465} {\path{doi:10.4236/oalib.1101465}}.

\bibitem{Patidar02062016}
K.~C. Patidar.
\newblock Nonstandard finite difference methods: recent trends and further developments.
\newblock {\em Journal of Difference Equations and Applications}, 22(6):817--849, 2016.
\newblock \href {https://doi.org/10.1080/10236198.2016.1144748} {\path{doi:10.1080/10236198.2016.1144748}}.

\bibitem{Persico_2018}
M.~Persico and G.~Granucci.
\newblock {\em Photochemistry}.
\newblock Springer International Publishing, 2018.
\newblock \href {https://doi.org/10.1007/978-3-319-89972-5} {\path{doi:10.1007/978-3-319-89972-5}}.

\bibitem{Pouyet2015}
E.~Pouyet, M.~Cotte, B.~Fayard, M.~Salom{\'e}, F.~Meirer, A.~Mehta, E.~S. Uffelman, A.~Hull, F.~Vanmeert, J.~Kieffer, M.~Burghammer, K.~Janssens, F.~Sette, and J.~Mass.
\newblock {2D X-ray and FTIR micro-analysis of the degradation of cadmium yellow pigment in paintings of Henri Matisse}.
\newblock {\em Applied Physics A}, 121(3):967--980, Nov 2015.
\newblock \href {https://doi.org/10.1007/s00339-015-9239-4} {\path{doi:10.1007/s00339-015-9239-4}}.

\bibitem{Lamp+Bio_Sero}
A.~C. Rea, L.~N. Vandenberg, R.~E. Ball, A.~A. Snouffer, A.~G. Hudson, Y.~Zhu, D.~E. McLain, L.~L. Johnston, J.~D. Lauderdale, M.~Levin, and T.~M. Dore.
\newblock Light-activated serotonin for exploring its action in biological systems.
\newblock {\em Chemistry \& Biology}, 20(12):1536--1546, 2013.
\newblock \href {https://doi.org/10.1016/j.chembiol.2013.11.005} {\path{doi:10.1016/j.chembiol.2013.11.005}}.

\bibitem{rohatgi1978fundamentals}
K.K. Rohatgi-Mukherjee.
\newblock {\em Fundamentals of Photochemistry}.
\newblock A Halsted Press book. Wiley, 1978.

\bibitem{Rudin1976}
W.~Rudin.
\newblock {\em Principles of Mathematical Analysis}.
\newblock McGraw-Hill, New York, third edition, 1976.

\bibitem{Rudin1991}
W.~Rudin.
\newblock {\em Functional analysis}.
\newblock International series in pure and applied mathematics. McGraw-Hill, New York, second edition edition, 1991.

\bibitem{Tedesco_Libro}
M.~Ruzicka.
\newblock {\em Nichtlineare Funktionalanalysis}.
\newblock Springer-Lehrbuch Masterclass Series. Springer Berlin Heidelberg, 2006.

\bibitem{SANDU2001589}
A.~Sandu.
\newblock Positive numerical integration methods for chemical kinetic systems.
\newblock {\em Journal of Computational Physics}, 170(2):589--602, 2001.
\newblock \href {https://doi.org/10.1006/jcph.2001.6750} {\path{doi:10.1006/jcph.2001.6750}}.

\bibitem{SCALONE2022351}
C.~Scalone.
\newblock Positivity preserving stochastic $\theta$-methods for selected {SDEs}.
\newblock {\em Applied Numerical Mathematics}, 172:351--358, 2022.
\newblock \href {https://doi.org/10.1016/j.apnum.2021.10.017} {\path{doi:10.1016/j.apnum.2021.10.017}}.

\bibitem{NSFD_INB}
K.~Sharma, S.~Swami, V.~Joshi, and S.B. Bhardwaj.
\newblock {\em Review on Non-Standard Finite Difference (NSFD) Schemes for Solving Linear and Non-linear Differential Equations}, pages 135--154.
\newblock CRC Press, 06 2021.
\newblock \href {https://doi.org/10.1201/9781003097938-6} {\path{doi:10.1201/9781003097938-6}}.

\bibitem{SUNDAY}
J.~Sunday, A.~Shokri, R.~O. Akinola, K.~V. Joshua, and K.~Nonlaopon.
\newblock {A convergence-preserving non-standard finite difference scheme for the solutions of singular Lane-Emden equations}.
\newblock {\em Results in Physics}, 42:106031, 2022.
\newblock \href {https://doi.org/10.1016/j.rinp.2022.106031} {\path{doi:10.1016/j.rinp.2022.106031}}.

\bibitem{THEBAULT}
J.M. Thébault and S.~Rabouille.
\newblock Comparison between two mathematical formulations of the phytoplankton specific growth rate as a function of light and temperature, in two simulation models {(Aster \& Yoyo)}.
\newblock {\em Ecological Modelling}, 163(1):145--151, 2003.
\newblock \href {https://doi.org/10.1016/S0304-3800(02)00404-0} {\path{doi:10.1016/S0304-3800(02)00404-0}}.

\bibitem{tonnesen1996photostability}
H.H. Tonnesen.
\newblock {\em Photostability Of Drugs And Drug Formulations}.
\newblock Taylor \& Francis series in pharmaceutical sciences. Taylor \& Francis, 1996.

\bibitem{Vertebrate2}
L.~N. Vandenberg and M.~Levin.
\newblock Far from solved: A perspective on what we know about early mechanisms of left--right asymmetry.
\newblock {\em Developmental Dynamics}, 239(12):3131--3146, 2010.
\newblock \href {https://doi.org/10.1002/dvdy.22450} {\path{doi:10.1002/dvdy.22450}}.

\bibitem{Zhang2024-br}
L.~Zhang, J.~Peng, Y.~Ge, H.~Li, and Y.~Tang.
\newblock {High-Accuracy} {Positivity-Preserving} {Finite Difference Approximations of the Chemotaxis Model for Tumor Invasion}.
\newblock {\em Journal of Computational Biology}, October 2024.
\newblock \href {https://doi.org/10.1089/cmb.2023.0316} {\path{doi:10.1089/cmb.2023.0316}}.

\bibitem{OFFNER202015}
P.~Öffner and D.~Torlo.
\newblock Arbitrary high-order, conservative and positivity preserving {Patankar-type} deferred correction schemes.
\newblock {\em Applied Numerical Mathematics}, 153:15--34, 2020.
\newblock \href {https://doi.org/10.1016/j.apnum.2020.01.025} {\path{doi:10.1016/j.apnum.2020.01.025}}.

\end{thebibliography}

\end{document}